%% Automatically generated by release.pl --- do not edit directly
\documentclass[12pt]{article}

\usepackage{amsmath,amssymb,amsthm}
\usepackage[colorlinks=true,allcolors=black,bookmarksopen,bookmarksdepth=3]{hyperref}
\usepackage[margin=1in]{geometry}
\usepackage{tikz-cd}
\usepackage{tikz}

\usepackage{enumitem}
\setlist{topsep=0em,partopsep=0em,parsep=0em,itemsep=0em}

\newtheorem{theorem}{Theorem}[section]
\newtheorem{lemma}[theorem]{Lemma}
\newtheorem{proposition}[theorem]{Proposition}
\newtheorem{corollary}[theorem]{Corollary}
\newtheorem{conjecture}[theorem]{Conjecture}

\theoremstyle{definition}
\newtheorem{definition}[theorem]{Definition}

\theoremstyle{remark}
\newtheorem{remark}[theorem]{Remark}

\newtheorem{question}[theorem]{Question}

\numberwithin{equation}{section}

%%letters

\newcommand\cE{\mathcal E}

\newcommand\cJ{\mathcal J}

\newcommand\cL{\mathcal L}
\newcommand\cM{\mathcal M}

\newcommand\cMbarprime{{\vphantom{\mathcal M}\smash{\overline{\mathcal M}}}'}
\newcommand\cO{\mathcal O}
\newcommand\cR{\mathcal R}
\newcommand\cU{\mathcal U}
\newcommand\cUbar{\overline{\mathcal U}}

\newcommand\cZ{\mathcal Z}
\renewcommand\AA{\mathbb A}
\newcommand\CC{\mathbb C}
\newcommand\DD{\mathbb D}
\newcommand\FF{\mathbb F}

\newcommand\LL{\mathbb L}
\newcommand\QQ{\mathbb Q}
\newcommand\RR{\mathbb R}
\newcommand\ZZ{\mathbb Z}
\newcommand\bm{\mathbf m}
\newcommand\bM{\mathbf M}

%%operators
\newcommand\Bl{\mathrm{Bl}}
\newcommand\An{\operatorname{An}}
\newcommand\ord{\operatorname{ord}}
\newcommand\Nbd{\operatorname{Nbd}}

\newcommand\ch{\operatorname{ch}}
\newcommand\Spec{\operatorname{Spec}}

\newcommand\Sym{\operatorname{Sym}}
\newcommand\supp{\operatorname{supp}}
\newcommand\colim{\operatornamewithlimits{colim}}
\newcommand\ev{\mathrm{ev}}
\newcommand\abs[1]{\lvert#1\rvert}

\newcommand\codim{\operatorname{codim}}
\newcommand\Proj{\operatorname{Proj}}
\newcommand\tighttimes{{\times}}

%%categories

%%mathtext symbols
\newcommand\aff{\mathrm{aff}}
\newcommand\inj{\mathrm{inj}}

\newcommand\fin{\mathrm{fin}}
\newcommand\naive{\mathrm{naive}}
\newcommand\simple{\mathrm{simple}}

\newcommand\tors{\mathrm{tors}}

\newcommand\semiFano{\mathrm{sF}}
\newcommand\semiregular{\mathrm{sr}}
\newcommand\smooth{\mathrm{sm}}
\newcommand\ACpx{\mathrm{ACpx}}
\newcommand\Cpx{\mathrm{Cpx}}
\newcommand\Univ{\mathrm{Univ}}
\newcommand\prev{\mathrm{prev}}
\newcommand\PT{\mathrm{PT}}
\newcommand\GW{\mathrm{GW}}

\newcommand\vir{\mathrm{vir}}
\newcommand\pt{\mathrm{pt}}
\newcommand\relinfty{{\mathrm{rel}\infty}}

\makeatletter
\def\varcolim@#1#2{%
  \vtop{\m@th\ialign{##\cr
    \hfil$#1\operator@font colim$\hfil\cr
    \noalign{\nointerlineskip\kern1.5\ex@}#2\cr
    \noalign{\nointerlineskip\kern-\ex@}\cr}}%
}
\def\dircolim{%
  \mathop{\mathpalette\varcolim@{\rightarrowfill@\textstyle}}\nmlimits@
}
\makeatother
\newcommand\invlim\varprojlim

%%arrows
\newcounter{narrows}
\newcounter{nnarrows}
\makeatletter
\newcommand\arrowstack[1]{%
\setcounter{narrows}{0}%
\@for\aaa:=#1\do{\stepcounter{narrows}}%
\setcounter{nnarrows}{0}%
\renewcommand\arraystretch{0}%
\setlength\tabcolsep{0pt}%
\mathrel{\begin{tabular}{c}%
\@for\arrowtype:=#1\do{%
\if\arrowtype r$\rightarrow$\fi%
\if\arrowtype l$\leftarrow$\fi%
\if\arrowtype R$\longrightarrow$\fi%
\if\arrowtype L$\longleftarrow$\fi%
\stepcounter{nnarrows}%
\ifnum\value{narrows}=\value{nnarrows}\else\\[0pt]\fi}%
\end{tabular}}}
\makeatother

%\hyphenation{Mau-lik}
%\hyphenation{Ne-kra-sov}
%\hyphenation{Ok-oun-kov}
%\hyphenation{Pand-hari-pande}
%\hyphenation{Don-ald-son}
%\hyphenation{Tho-mas}
%\hyphenation{Gro-mov}
%\hyphenation{Wit-ten}

\begin{document}

\title{Universally counting curves in Calabi--Yau threefolds}

\author{John Pardon\thanks{This research was conducted during the period the author was partially supported by a Packard Fellowship and by the National Science Foundation under the Alan T.\ Waterman Award, Grant No.\ 1747553.}}

\date{August 26, 2025}

\maketitle

\begin{abstract}
We show that curve enumeration invariants of complex threefolds with nef anti-canonical bundle are determined by their values on local curves.
This implies the MNOP conjecture of Maulik, Nekrasov, Okounkov, and Pandharipande relating Gromov--Witten and Donaldson--Pandharipande--Thomas invariants, for all complex threefolds with nef anti-canonical bundle (in particular, all Calabi--Yau threefolds) and primary insertions (no descendents), given its known validity for local curves due to Bryan, Okounkov, and Pandharipande.
The main new technical ingredient in our work is a generic transversality result for holomorphic curves in complex manifolds.
Due to the rigidity of complex structures, this result is necessarily weaker than the corresponding generic transversality property for holomorphic curves in almost complex manifolds.
Despite this weaker nature, it is enough to obtain our main result by following the proof of the Gopakumar--Vafa integrality conjecture by Ionel and Parker.
%% MSC 2020 Primary: 14N10 14C35 14N35 19E99 53D45
%% MSC 2020 Secondary: 14C05 14C15 14J30 14J32
%% keywords: threefolds, K-theory, Grothendieck group, Calabi--Yau, Gromov--Witten invariants, Gopakumar--Vafa invariants, Donaldson--Thomas invariants, MNOP conjecture, enumerative geometry
\end{abstract}

\section{Introduction}

There are many ways of enumerating curves in complex threefolds \cite{pandharipandethomassurvey}.
These invariants turn out to satisfy some surprising relations which appear to have no straightforward explanation.
In fact, according to Pandharipande--Thomas \cite{pandharipandethomasstablepairs}, the multitude of existing computations suggest that all reasonable curve enumeration theories for complex threefolds are equivalent, despite arising from quite varied geometric origins.

A folk conjecture offers an explanation of this phenomenon: a complex threefold should be `enumeratively equivalent' to a linear combination of \emph{local curves} (rank two vector bundles over smooth proper curves).
We provide a precise formulation and proof of this conjecture for complex threefolds with \emph{nef anti-canonical bundle}.
That is, we define a certain \emph{Grothendieck group of 1-cycles} in complex threefolds (with nef anti-canonical bundle) (Definition \ref{grothintrodef}), and we show that this group is \emph{freely generated by local curves} (Theorem \ref{maincalculation}).

This result and its proof were inspired by the proof of the Gopakumar--Vafa integrality conjecture by Ionel--Parker \cite{ionelparker}.
They showed that Gromov--Witten invariants of almost complex threefolds are integer linear combinations of Gromov--Witten invariants of local curves, which were known to satisfy Gopakumar--Vafa integrality by work of Bryan--Pandharipande \cite{bryanpandharipande}.
Their argument may be interpreted as a proof that a certain Grothendieck group of 1-cycles in \emph{almost} complex threefolds with nef anti-canonical bundle is freely generated by local curves (after completing by genus, later removed by Doan--Ionel--Walpuski \cite{doanionelwalpuski}).
The core technical fact underlying the work of Ionel--Parker is \emph{generic transversality}: for generic almost complex structures, all simple pseudo-holomorphic maps from compact Riemann surfaces are unobstructed.
\emph{The main technical ingredient in our work is a generic transversality result in complex analytic geometry} (necessarily weaker than its analogue in almost complex geometry), which may be of independent interest (see Section \ref{introtransv}).

According to our main result, an identity between curve enumeration invariants is valid for all complex threefolds with nef anti-canonical bundle iff it is valid for local curves.
For example, we may immediately conclude that the conjecture of Maulik--Nekrasov--Okounkov--Pandharipande \cite{mnopi,mnopii} relating Gromov--Witten and Donaldson--Thomas/Pandharipande--Thomas invariants holds for complex threefolds with nef anti-canonical bundle and primary insertions (Theorem \ref{mainresult}), since it has been known for some time to hold for local curves by the work of Bryan--Pandharipande \cite{bryanpandharipande} and Okounkov--Pandharipande \cite{okounkovpandharipande}.
Of course, this is unlikely to be the only application.
It is natural to ask whether our framework could be applied to the invariants of Maulik--Toda \cite{mauliktoda} or to the $K$-theoretic Gromov--Witten invariants of Givental \cite{giventalquantumktheory} and Y.P. Lee \cite{ypleequantum} (and, specifically, the work of Jockers--Mayr \cite{jockersmayr} and Chou--Lee \cite{choulee}).

The present `use generic transversality to reduce to the case of local curves' strategy has two main drawbacks.
First, it relies fundamentally on the nef anti-canonical bundle hypothesis (via the dimension count in Lemma \ref{semiregulardimension}), while most (all?) (including conjectural) identities between curve enumeration invariants do not require this hypothesis.
Second, it provides no insight as to \emph{why} a given identity between curve enumeration invariants should hold.
Progress on either front would be of exceptional interest; in particular, direct geometric arguments for identities between curve enumeration invariants remain valuable.

\subsection{Universal enumerative invariant}

Let us now sketch the definition of the `universal curve enumeration invariant' and the `Grothendieck group of 1-cycles in complex threefolds' $H^*_c(\cZ/\Cpx_3)$ in which it is valued (a thorough treatment is given in Section \ref{grothendieckdefsec}).
The reader should keep in mind that this discussion is little more than formal nonsense.
The content comes later, in the form of our main result computing the Grothendieck group of semi-Fano 1-cycles $H^*_c(\cZ_\semiFano/\Cpx_3)$.

Given a morphism of (reasonable) topological spaces $\pi:W\to B$, we may consider `cohomology of $B$ with coefficients in fiberwise chains rel infinity on $W\to B$', denoted $H_*^\relinfty(W/B)=H^*(W,(W\to B)^*(B\to *)^!\ZZ)$.
Dually, `homology of $B$ with coefficients in fiberwise compactly supported cochains on $W\to B$' is denoted $H^*_c(W/B)=H^*_c(W,(W\to B)^!(B\to *)^*\ZZ)$.
These sort of `mixed groups' associated to a map $W\to B$, taking homology in one direction and cohomology in the other, are called `bivariant theories', see for example Fulton--MacPherson \cite{fultonmacphersonbivariant} and Section \ref{bivariantthry} below.

Given a (not necessarily proper) family of threefolds $X\to B$, we denote by $\cZ(X/B)\to B$ the space of compact (complex) 1-cycles in the fibers of $X\to B$.
That is, a point of $\cZ(X/B)$ over $b\in B$ is a finite formal non-negative integer linear combination $z=\sum_im_iC_i$ of compact irreducible 1-dimensional subvarieties $C_i\subseteq X_b$.

\begin{definition}[see also Definition \ref{grothgpdef}]\label{grothintrodef}
The \emph{Grothendieck group of 1-cycles in complex threefolds} is the directed colimit
\begin{equation}
H^*_c(\cZ/\Cpx_3):=\dircolim_{X\to B}H^*_c(\cZ(X/B)/B)
\end{equation}
over all (not necessarily proper) families of complex threefolds $X\to B$ over (complexified) finite simplicial complexes $B$.
Dually, a \emph{(naive) curve enumeration theory (for complex threefolds)} is a class in the limit $H_*^\relinfty(\cZ/\Cpx_3)_\naive:=\lim_{X\to B}H_*^\relinfty(\cZ(X/B)/B)$.

The notation $H^*_c(\cZ/\Cpx_3)$ is short for $H^*_c(\cZ(\Univ_3/\Cpx_3)/\Cpx_3)$, as we imagine that it is the bivariant group $H^*_c$ of the relative cycle space $\cZ(\Univ_3/\Cpx_3)\to\Cpx_3$ of the universal family $\Univ_3\to\Cpx_3$ of the moduli stack $\Cpx_3$ of complex threefolds.
\end{definition}

A `curve enumeration theory' in this sense is thus a specification of a `relative virtual fundamental class' in $H_*^\relinfty(\cZ(X/B)/B)$ for every family of complex threefolds $X\to B$ over a finite simplicial complex $B$, compatible with pullback.
Concretely speaking, a cycle $c$ representing a class in $H_*^\relinfty(\cZ(X/B)/B)$ consists of a cycle $c_v$ on $\cZ(X_v)$ for every vertex $v$ in $B$, a chain $c_e$ on $\cZ(X_e/e)$ with boundary $\partial c_e=c_v-c_{v'}$ for every edge $e:v\to v'$ in $B$, etc.
For example, we shall see that the usual Gromov--Witten relative virtual fundamental class is naturally a class in $H_*^\relinfty(\cZ/\Cpx_3;\QQ((u)))$.

An element of the Grothendieck group of 1-cycles may be thought of as a `curve enumeration problem' (or an `insertion') on which any curve enumeration theory may be evaluated to yield a number.
For example, for a smooth complex projective threefold $X$ and any class $\beta\in H_2(X)$, the class of the characteristic function $1_\beta\in H^0_c(\cZ(X))$ in $H^*_c(\cZ/\Cpx_3)$ asks to `count curves in class $\beta$'; more generally, any compactly supported cohomology class in $H^*_c(\cZ(X))$ can be used to constrain the curves we ask to count.
A class $\gamma\in H^*_c(\cZ(X/B)/B)$ asks to count curves in fibers of the family $X\to B$, constrained by $\gamma$.
Note that we consider families $X\to B$ which are not necessarily proper: all the `compactness' necessary to have a well-behaved enumerative problem is encoded by the `compact support' condition in the definition of the bivariant group $H^*_c$.

The choice of `finite simplicial complexes' as the class of base spaces $B$ used to define the Grothendieck group $H^*_c(\cZ/\Cpx_3)$ in Definition \ref{grothintrodef} may appear unnatural.
Do observe, though, that for any family $X\to B$ over (say) a complex analytic space $B$, a choice of real analytic triangulation of $B$ will induce a map $H^*_c(\cZ(X/B)/B)\to H^*_c(\cZ/\Cpx_3)$.
The virtue of simplicial complexes is that they are more amenable to the cut-and-paste operations and local perturbations (bump functions) involved in computing the group $H^*_c(\cZ/\Cpx_3)$ (Theorem \ref{maincalculation}).

The Grothendieck group $H^*_c(\cZ/\Cpx_3)$ is graded by cohomological degree $i$ and by chern number $k$ (the chern number of a 1-cycle in $X$ is its pairing with $c_1(TX)$, which is a locally constant function on the space of 1-cycles $\cZ$).
\begin{equation}\label{grothgrading}
H^*_c(\cZ/\Cpx_3)=\bigoplus_{i,k\in\ZZ}H^i_c(\cZ(-,k)/\Cpx_3)
\end{equation}
Given that the `expected dimension' (index) of moduli spaces of curves (in complex \emph{three}folds) of chern number $k$ is (typically) $2k$ (real dimensions) and a degree $i$ cohomology class defines a codimension $i$ constraint, it is natural to introduce the \emph{virtual dimension} grading on $H^*_c(\cZ/\Cpx_3)$ given by $2k-i$ (which is a `homological grading' and represents the typical virtual dimension of the moduli spaces of curves that the given class in $H^*_c(\cZ/\Cpx_3)$ asks to enumerate).
Dually, there is a bi-grading $H_*^\relinfty(\cZ/\Cpx_3)_\naive=\bigoplus_i\prod_kH_i^\relinfty(\cZ(-,k)/\Cpx_3)_\naive$ by homological degree $i$ and chern number $k$, and it is natural to consider the `virtual codimension' grading $2k-i$.
The standard curve enumeration theories have virtual codimension zero (hence yield numbers when paired against elements of $H^*_c(\cZ/\Cpx_3)$ of virtual dimension zero).

The group $H^*_c(\cZ/\Cpx_3)$ is a bi-algebra: product corresponds to disjoint union of cycles, while coproduct corresponds to sum of cycles.
It also has bi-algebra endomorphisms given by pulling back under the `multiply by $d$' operation on cycles.

One could make all these same definitions for compact $k$-cycles in complex $n$-folds for any values $0\leq k\leq n$.
One of the significant special features of case we consider here, namely $1$-cycles in $3$-folds, is that the standard moduli spaces of curves in threefolds have virtual dimension depending only on the chern number.

The reader may wonder why the colimit in Definition \ref{grothintrodef} yields the `right' result, when it would certainly make more sense to take a homotopy colimit instead.
The answer is that Definition \ref{grothintrodef} actually \emph{is} a homotopy colimit, essentially due to the fact that our class of bases is `homotopical' (this is another reason why simplicial complexes are a nice class of base spaces to work with).

\subsection{Free generation by local curves}

We may now state our main result, which concerns not $H^*_c(\cZ/\Cpx_3)$ but rather $H^*_c(\cZ_\semiFano/\Cpx_3)$, whose definition is identical except that instead of the entire space of 1-cycles $\cZ$, it considers just the open subset of \emph{semi-Fano} 1-cycles $\cZ_\semiFano\subseteq\cZ$, consisting of those 1-cycles $\sum_im_iC_i$ all of whose components $C_i\subseteq X$ pair non-negatively with $c_1(TX)$.

\begin{theorem}[see Theorems \ref{mainsurjectivity} and \ref{maininjectivity}]\label{maincalculation}
The Grothendieck group of semi-Fano 1-cycles in complex threefolds $H^*_c(\cZ_\semiFano/\Cpx_3)$ is supported in virtual dimension $\geq 0$, and in virtual dimension zero it is:
\begin{itemize}
\item freely generated as a $\QQ$-algebra by the equivariant local curve elements $x_{g,m,k}$ with $g\geq 0$, $m\geq 1$, $k\geq 0$, and $(m-1)k=0$, after passing to $\QQ$ coefficients.
\item freely generated as a $\ZZ$-algebra by any choice of geometric local curve elements $y_{g,m,k}$ with $g\geq 0$, $m\geq 1$, $k\geq 0$, and $(m-1)k=0$.
\end{itemize}
\end{theorem}

The equivariant local curve elements $x_{g,m,k}\in H^*_c(\cZ/\Cpx_3;\QQ)$ encode the $S^1$-equivariant enumerative problem of curves of degree $m$ in a local curve of genus $g$ and chern number $k$ (see Definition \ref{loccurveeltsdef} and Proposition \ref{explicitlocalizedpushforward}).
They are defined for all integers $g\geq 0$, $m\geq 0$, and $k\in\ZZ$, and have canonical lifts $x_{g,m,k}\in H^*_c(\cZ_\semiFano/\Cpx_3;\QQ)$ for $k\geq 0$.
The geometric local curve elements $y_{g,m,k}\in H^*_c(\cZ_\semiFano/\Cpx_3)$ are defined (but not uniquely so) for $g\geq 0$, $m\geq 1$, $k\geq 0$, and $(m-1)k=0$, and they encode the enumerative problem of a smooth, isolated, and unobstructed curve of genus $g$ and chern number $k$, taken with multiplicity $m$ (see Definition \ref{geolocalelts}, Lemma \ref{geolocaleltsexist}, and Remark \ref{geolocaleltsnottopological}).

For example, Theorem \ref{maincalculation} says that for any smooth projective complex threefold $X$ with $c_1(TX)=0$ and any homology class $\beta\in H_2(X)$, the element $(X,1_\beta)\in H^*_c(\cZ_\semiFano/\Cpx_3)$ is equal to a unique polynomial in the variables $x_{g,m,0}$ (the variables $x_{g,1,k}$ for $k>0$ do not appear since they have positive chern number while $(X,1_\beta)$ has chern number zero).
It would be interesting to compute these polynomials for various $X$, perhaps by lifting existing computations of curve enumeration invariants.

\begin{remark}
The Grothendieck group of 1-cycles in complex threefolds $H^*_c(\cZ/\Cpx_3)$ is the homology of a naturally defined spectrum (see Remark \ref{shgroth}).
This spectrum thus has $E$-homology groups $E^*_c(\cZ/\Cpx_3)$ for any spectrum $E$.
Recall that a spectrum $E$ is called \emph{connective} when $E(\pt)$ (homology or cohomology, they are the same) is supported in non-negative homological degree (that is $\pi_iE=0$ for $i<0$).
It follows from Theorem \ref{maincalculation} and the Atiyah--Hirzebruch spectral sequence that for any connective spectrum $E$, the group $E^*_c(\cZ_\semiFano/\Cpx_3)$ is supported in non-negative virtual dimension and in virtual dimension zero is freely generated as an $E_0(\pt)$-algebra by any choice of geometric local curve elements (the corresponding statement for equivariant local curve elements is `not interesting' since it requires rationalizing, which destroys any information captured by taking $E$-homology instead of singular homology).
\end{remark}

Theorem \ref{maincalculation} is not the final word on the structure of enumerative invariants of complex threefolds with nef anti-canonical bundle.
Specifically, one could ask for the product expansion of Ionel--Parker \cite{ionelparker} in the complex setting:

\begin{conjecture}\label{exponentialconjecture}
For any complex projective Calabi--Yau threefold $X$, the element $(X,t^{[\cdot]})\in H^0_c(\cZ_\semiFano/\Cpx_3)[[t^{H_2(X)}]]$ is an infinite product $\prod_\beta\prod_{g\geq 0}f_g(t^\beta)^{e_{\beta,g}(X)}$ for unique integer invariants $e_{\beta,g}(X)\in\ZZ$, where $f_g(t)=\sum_{m\geq 0}x_{g,m,0}t^m$.
\end{conjecture}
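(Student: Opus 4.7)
The plan is to combine Theorem \ref{maincalculation}, the bi-algebra structure on the Grothendieck group, and a comparison with the almost complex analogue, in which Ionel--Parker have essentially proven the desired product expansion.

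First I would exploit the group-like structure. The generating series $(X,t^{[\cdot]})=\sum_\beta \1_\beta t^\beta$ is group-like in the completed bi-algebra $H^0_c(\cZ^\semiFano(\Cpx_3,0))[[t^{H_2(X)}]]$, because the coproduct (pull-back along cycle addition) satisfies $\Delta(\1_\beta)=\sum_{\beta_1+\beta_2=\beta}\1_{\beta_1}\otimes\1_{\beta_2}$. Applied to cycles on a fixed local curve, the same reasoning shows each $f_g(t)=\sum_m x_{g,m,0}t^m$ is also group-like, hence so is any formal product $\prod_\beta\prod_g f_g(t^\beta)^{e_{\beta,g}}$ with $e_{\beta,g}\in\ZZ$. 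The conjecture therefore becomes the assertion that the primitive element $L=\log(X,t^{[\cdot]})$ lies in the integer span of $\{\log f_g(t^\beta)\}_{\beta,g}$.

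Uniqueness of the $e_{\beta,g}$ as rationals follows from Theorem \ref{maincalculation}: the ring $H^0_c(\cZ^\semiFano(\Cpx_3,0))$ is polynomial on the $x_{g,m,0}$, and the coproduct making each $f_g$ group-like endows each factor with the Hopf structure of symmetric functions, whose primitive part has a $\QQ$-basis of power sums $p_{g,n}$. Each $\log f_g(t^\beta)$ contributes a nonzero multiple of $p_{g,1}t^\beta$ to $L$, from which $e_{\beta,g}(X)\in\QQ$ can be read off directly. Integrality is therefore the substantive content.

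For integrality, I would compare with the almost complex Grothendieck group $H^*_c(\cZ^\semiFano(\ACpx_3))$. Every complex threefold is in particular almost complex, giving a natural comparison morphism into the almost complex Grothendieck group. Ionel--Parker, refined by Doan--Ionel--Walpuski, establish the analogous product expansion with integer exponents in the almost complex setting. Provided the comparison map is injective on the $(0,0)$ piece, the integrality transfers. I would obtain this injectivity by proving an almost complex analogue of Theorem \ref{maincalculation}: that $H^0_c(\cZ^\semiFano(\ACpx_3,0))$ is also freely generated as a ring by the almost complex local curve elements analogous to $x_{g,m,0}$. Since local curves arise from honest holomorphic bundles over algebraic curves, the comparison sends the complex generators to their almost complex counterparts, and the induced map of polynomial rings is an isomorphism.

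The main obstacle will be establishing this almost complex analogue of Theorem \ref{maincalculation}. Ionel--Parker computed only the single piece $H_0(\ACpx_3,H^0_c(\cZ^\CY))$ of the $E_2$ page of the relevant spectral sequence, and the full Grothendieck group requires both an almost complex generic transversality argument (Gromov's classical theorem, together with refinements for multiple covers in the spirit of Doan--Ionel--Walpuski) and an adaptation of the bi-algebra injectivity argument of the present paper. Matching local curve generators under the comparison morphism should then be a formal check, since in both theories the generators arise from $\CC^\times$-equivariant counts on the same holomorphic total spaces.
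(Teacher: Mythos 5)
The statement you are attempting to prove is stated in the paper as a \emph{conjecture}, not a theorem; the paper offers no proof. In fact, the sentence immediately preceding the conjecture in the paper suggests precisely the route you outline: ``perhaps deducible from their result by comparing $H^*_c(\cZ^\semiFano(\Cpx_3))$ and $H^*_c(\cZ^\semiFano(\ACpx_3))$ via Theorem \ref{maincalculation} and an almost complex analogue thereof.'' So your plan is aligned with the author's own speculation, but it is a plan, not a proof.

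Your preliminary reductions are sound. The group-like property of $(X,t^{[\cdot]})$ follows from the fact that cycle addition sends $\cZ(X,\beta_1)\times\cZ(X,\beta_2)$ into $\cZ(X,\beta_1+\beta_2)$; the group-like property of $f_g(t)$ in $H^0_c(\cZ^\semiFano(\Cpx_3))[[t]]$ follows from Lemma \ref{coproductlocalcurve} together with the vanishing of $H^*_c(\cZ^\semiFano(\Cpx_3))$ in negative virtual dimension, which kills all ${}_\ell x_{g,a,0}$ terms with $\ell\ne 0$ (one factor always has negative virtual dimension). Uniqueness of the rational exponents from the free polynomial structure of Theorem \ref{maincalculation} is also essentially correct, though one should be careful that the ``Hopf structure of symmetric functions'' heuristic requires the coproduct to agree exactly with the symmetric-function one, which it does in chern number zero by the vanishing just mentioned.

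The substantive gap is exactly the one you flag: there is currently no almost complex analogue of Theorem \ref{maincalculation} computing $H^0_c(\cZ^\semiFano(\ACpx_3,0))$, and Ionel--Parker compute only the single $E_2$-term $H_0(\ACpx_3,H^0_c(\cZ^\CY))$ rather than the total homology of the double complex. Without knowing that the comparison map $H^0_c(\cZ^\semiFano(\Cpx_3,0))\to H^0_c(\cZ^\semiFano(\ACpx_3,0))$ is injective, integrality cannot be transferred. Nor is it automatic that the complex local curve generators map to the almost complex ones in a way that identifies the two polynomial rings, since the almost complex side involves a different notion of deformation equivalence and a potentially different set of local curve generators. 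Establishing the almost complex analogue would require reproving the generic transversality and filtration arguments of this paper in the almost complex setting, plus the bi-algebra injectivity argument, none of which is routine; this is precisely why the statement is left as a conjecture. Your proposal therefore correctly identifies the approach and the obstruction but does not close the gap.
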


Such a result is perhaps deducible from the result of \cite{ionelparker} by comparing $H^*_c(\cZ_\semiFano/\Cpx_3)$ and $H^*_c(\cZ_\semiFano/\ACpx_3)$ via Theorem \ref{maincalculation} and an almost complex analogue thereof.
Alternatively, one could ask to define and compute another flavor of Grothendieck group of 1-cycles which keeps all cycles with the same support `together' and in which a decomposition of $X$ into (local curve?) generators naturally encodes the statement of Conjecture \ref{exponentialconjecture}.

While a favorable comparison between $H^*_c(\cZ_\semiFano/\Cpx_3)$ and $H^*_c(\cZ_\semiFano/\ACpx_3)$ seems rather tractable, the following conjecture is rather wild:

\begin{conjecture}
The map $H^*_c(\cZ/\Cpx_3)\to H^*_c(\cZ/\ACpx_3)$ is an isomorphism.
\end{conjecture}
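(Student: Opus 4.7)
The plan is to show that the inclusion of double complexes $C_*(\Cpx_3,C^*_c(\cZ))\hookrightarrow C_*(\ACpx_3,C^*_c(\cZ))$, induced by regarding a complex family as an almost complex family, is a quasi-isomorphism. I would proceed in stages of increasing difficulty: first the semi-Fano part in non-positive virtual dimension, then the rest of the semi-Fano part, and finally the general case.

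On the semi-Fano part in non-positive virtual dimension, Theorem~\ref{maincalculation} computes the left-hand side as a free algebra on the equivariant local curve elements $x_{g,m,k}$. The almost complex analogue due to Ionel--Parker~\cite{ionelparker} and Doan--Ionel--Walpuski~\cite{doanionelwalpuski} identifies the $H_0(\ACpx_3,H^0_c(\cZ^\CY))$ piece of the right-hand side with the same generators. The first substep is to extend the almost complex Ionel--Parker calculation to a full computation of $H^*_c(\cZ^\semiFano(\ACpx_3))$ parallel to Theorem~\ref{maincalculation}; this should be possible by transporting the techniques of the present paper to the almost complex setting, where generic transversality is in fact easier. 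Given such a parallel computation, both sides are free algebras on the same generators and the natural map is tautologically the identity on generators. Extending to positive virtual dimension within the semi-Fano part should follow from propagating via the bi-algebra structure and the ``multiply by $d$'' endomorphisms, which commute with the inclusion.

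For the general case, I would attempt a \v Cech-type chain homotopy on the space of almost complex structures. Near the integrable locus of a fixed smooth $6$-manifold, the Kuranishi family of complex deformations embeds as a finite-dimensional slice inside the (contractible) space of almost complex structures, so one can hope to cover a general almost complex family by such neighborhoods and use a partition-of-unity chain homotopy to push each simplex into the complex subcomplex. The \textbf{main obstacle} is that some smooth oriented $6$-manifolds admit no complex structure at all; on such components the ``deform to integrable'' strategy has nothing to aim at, and one would need an auxiliary argument that the corresponding almost complex cycle data is already a simplicial boundary in $C_*(\ACpx_3,C^*_c(\cZ))$. Compounding this, outside the semi-Fano range even the left-hand side $H^*_c(\cZ(\Cpx_3))$ is essentially uncomputed (the paper itself notes this appears intractable), so the argument must be soft rather than computational. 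Consistent with the author's remark on the exceptional interest of removing the nef anti-canonical hypothesis, I expect this last step to require genuinely new ideas beyond the generic-transversality techniques developed here.
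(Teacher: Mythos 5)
This statement is an open conjecture in the paper; the author gives no proof, so there is nothing to compare your proposal against, and what you have written is (as you yourself flag) an attack sketch rather than a proof. A few of your intermediate steps deserve pushback. First, the claim that extending to positive virtual dimension ``should follow from propagating via the bi-algebra structure and the $\rho_d$'' is backwards: the $\rho_d$ are pullbacks under the multiply-by-$d$ map, hence they lower chern number and push classes toward \emph{more negative} virtual dimension, while products and coproducts preserve total virtual dimension. Theorem~\ref{maincalculation} and Proposition~\ref{grothendiecktransverse} tell you nothing about the positive-virtual-dimension part of even the semi-Fano group, which remains wide open. Second, even in the virtual dimension $\le 0$ semi-Fano range, knowing both groups are free on ``the same generators'' requires separately verifying that the comparison map sends the equivariant local curve elements $x_{g,m,k}$ to the corresponding almost complex generators; the constructions on the two sides are different, and you cannot call this tautological without an argument.

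Your third stage correctly locates the heart of the difficulty, but the obstacle you name (non-existence of complex structures on some $6$-manifolds) is only part of it. The more structural obstruction is that integrable structures form a rigid, finite-dimensional (at any given point, via Kuranishi theory) subset of the infinite-dimensional space of almost complex structures; a path or family of almost complex structures cannot generically be homotoped to stay integrable, and the Kuranishi slice only controls a neighborhood of a single integrable point. A \v{C}ech partition-of-unity chain homotopy would have to keep the $C^*_c(\cZ)$-valued coefficient data coherent across the simplices of such a cover, and no general mechanism for this is on offer. These are precisely the difficulties that lead the author to state this as a conjecture rather than a theorem.
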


Unfortunately, we see no particular reason why the group $H^*_c(\cZ/\Cpx_3)$ should be understandable.
The most interesting question is probably whether there exists a modification of this group $H^*_c(\cZ/\Cpx_3)$ for which an analogue of Theorem \ref{maincalculation} holds and which can be used to study enumerative invariants of general (not necessarily having nef anti-canonical bundle) complex threefolds.

\subsection{Generic transversality}\label{introtransv}

Now let us have a discussion of the main ideas going into the proof of Theorem \ref{maincalculation} (without which the rest of this Introduction would amount to little more than wishful thinking).

Theorem \ref{maincalculation} is, at its core, a transversality assertion.
We \emph{perturb} the target $X$ containing the 1-cycles we wish to count, and by choosing this perturbation \emph{generically} we reduce our enumerative problem to a finite sum of enumerative problems each supported on a single smooth curve which is unobstructed in some sense.
We then use a filtration (by multiplicity) argument to show that such `geometric local curve' enumerative problems are related (by some `upper triangular' change of basis) to the `equivariant local curve' enumerative problems.
This outline is precisely the strategy taken by Ionel--Parker \cite{ionelparker} to prove the Gopakumar--Vafa integrality conjecture using generic transversality for almost complex structures.
The contribution of this text is to formulate a generic transversality result for complex structures and to show how it may be applied to complex analytic enumerative problems (there is currently no satisfactory theory of sheaf-theoretic enumerative invariants, such as Donaldson--Pandharipande--Thomas invariants, in the almost complex setting).

Generic transversality is a familiar and elementary aspect of almost complex geometry.
Almost complex structures are very flexible (they have a large space of local perturbations), which ensures that for a generic almost complex structure, all (compact) simple pseudo-holomorphic curves are unobstructed.
Complex structures are much more rigid, and a compact complex manifold may fail to have any nontrivial deformations at all!
So, to rescue the statement of generic transversality, we weaken it: we first fix a curve $C\subseteq X$, and then we ask to perturb just a small open neighborhood of $C$ (throwing away the rest of $X$) to make all curves inside this neighborhood unobstructed.
Now a small neighborhood of a curve in any complex manifold has plenty of deformations, from which this \emph{generic transversality for complex structures} claim follows easily.

Now if we try to apply this complex analytic generic tranversality fact to an enumerative problem, it appears (at first glance) much too weak to gain any leverage at all.
Curves in a fixed (say, projective) target $X$ typically move in high-dimensional families throughout all of $X$, even if they lie in a moduli space of `expected dimension' zero (indeed, this is the `problem' that we are trying to use generic transversality to solve!).
A perturbation of just a small neighborhood of a fixed curve $C\subseteq X$ gives us a well-behaved moduli space of `curves near $C$', but to understand the entire enumerative problem, we need to \emph{patch together} these local perturbations somehow.
In a sense, we want to `perturb the target $X$ as a function of the curve $C\subseteq X$ we are counting' (circular though this may sound).

A nice way to make this idea precise is to use what can be called the `graph trick'.
Suppose we are asked to enumerate curves in a family $X\to B$ with an insertion $\gamma\in H^*_K(\cZ(X/B)/B)$ supported on a compact set $K\subseteq\cZ(X/B)$.
By introducing $n$ more degrees of freedom $y\in\RR^n$ (that is, replacing $X\to B$ with its stabilization $X\times\RR^n\to B\times\RR^n$) and imposing the codimension $n$ constraint $y=0$ specifying that these new coordinates must equal zero, we get an equivalent enumerative problem, now supported on $K\times 0\subseteq\cZ(X/B)\times\RR^n$.
So far we have done nothing at all, but now we observe that we could instead impose the enumeratively equivalent constraint $y=f(z)$ for some function $f:\cZ(X/B)\to\RR^n$, and then our new insertion is supported on the graph of $f|_K$ inside $\cZ(X/B)\times\RR^n$, which maps injectively to the base $B\times\RR^n$ provided we choose $f|_K$ to be an embedding.
We conclude: \emph{every curve enumeration problem is equivalent to one whose insertion $\gamma\in H^*_K(\cZ(X/B)/B)$ is supported on a compact set $K\subseteq\cZ(X/B)$ mapping injectively to the base $B$}.
Now for such an insertion $\gamma$, we can (at least plausibly) get some mileage out of (complex analytic) generic transversality, since the constraint $\gamma$ localizes the counting problem around at most a single 1-cycle in every fiber of $X\to B$ (and thus we can trim away all but a small neighborhood of these 1-cycles from $X$, leaving us with a family of neighborhoods of 1-cycles, to which we can now plausbily apply complex analytic generic transversality).
Note how the weaker nature of complex analytic generic transversality compared to almost complex generic transversality has forced us into the setting of \emph{family} invariants (i.e.\ over a nontrivial base space $B$), while in the setting of almost complex generic transversality, Ionel--Parker \cite{ionelparker} only need to consider $0$- and $1$-parameter families of almost complex manifolds.

To apply complex analytic generic transversality to a curve enumeration problem $\gamma\in H^*_K(\cZ(X/B)/B)$ for compact $K\subseteq\cZ(X/B)$ injecting to $B$, there is yet one more issue to resolve.
The statement of complex analytic generic transversality involves deformations of a neighborhood of a curve $C\subseteq X$ by a certain `regluing' operation supported near a (germ of) smooth divisor $D\subseteq X$ intersecting $C$ in a finite set which meets every irreducible component of $C$ (we say such a divisor \emph{controls} $C$).
While the existence of a controlling divisor for a single curve $C\subseteq X$ is trivial, the same cannot be said for a family of curves $K\subseteq\cZ(X/B)$ in a family $X\to B$ (even given injectivity of $K\to B$).
A controlling (relative) divisor certainly exists locally on $B$, but \emph{a priori} the divisors chosen for different open neighborhoods in $B$ could intersect (which prevents us from using them to deform $X$ simultaneously, as we need to do).
A nontrivial inductive argument (induction on the dimension of $K$) is required to see that such divisors together controlling all of $K$ can indeed be chosen to be disjoint (we call this `enough divisors' Proposition \ref{divisorcovering}).
Note that since our divisors only exist locally on $B$, we need a large supply of real analytic bump functions on $B$ to write down the requisite divisorial deformations, and to do this we replace $B$ with a very fine real analytic triangulation thereof.

We have now finally shown that an arbitrary curve enumeration problem is equivalent to one supported inside the open locus of \emph{interior semi-regular 1-cycles} (that is, the set of 1-cycles $z=\sum_im_iC_i$ which are semi-regular, meaning $\bigsqcup_i\tilde C_i\to X$ is unobstructed in a strong sense, see Definition \ref{regulardef}, and for which all nearby 1-cycles $z'$ are also semi-regular).
In fact, this argument shows that map from the Grothendieck group of interior semi-regular 1-cycles to the Grothendieck group of all 1-cycles is an isomorphism (see Theorem \ref{grothendiecktransverse}).

Now we may use this result to study the Grothendieck group of semi-Fano 1-cycles.
The locus of interior semi-regular semi-Fano 1-cycles has dimension bounded by its virtual dimension (see Lemma \ref{semiregulardimension}), from which it follows that the Grothendieck group of such 1-cycles is supported in virtual dimension $\geq 0$ and is generated in virtual dimension zero by Poincaré duals of smooth points, which are the so-called `geometric local curve elements'.
While the geometric local curve elements are in general rather complicated (see Remark \ref{geolocaleltsnottopological}), they become simple (and agree with the equivariant local curve elements) if we pass to the associated graded of the filtration of the cycle space $\cZ$ by multiplicity (Definition \ref{multfilt}, Lemma \ref{eqlocalcurveassocgraded}, and Proposition \ref{reducedtoptype}).
This proves the generation part of Theorem \ref{maincalculation}.

We expect that one can derive the full strength of Theorem \ref{maincalculation} (\emph{free} generation) by continuing this geometric reasoning (see Remark \ref{continue}), but instead we opt for an algebraic argument to deduce free generation (see Section \ref{bialgebraconstraints}).

\subsection{MNOP correspondence}

Now let us explain how Theorem \ref{maincalculation} may be used to verify the primary insertions case of the conjecture of Maulik--Okounkov--Nekrasov--Pandharipande \cite{mnopi,mnopii} for complex threefolds with nef anti-canonical bundle (regarding descendents, see Question \ref{descendentquestion}).

The original `MNOP' conjecture of Maulik--Nekrasov--Okounkov--Pandharipande \cite{mnopi,mnopii} relates Gromov--Witten and Donaldson--Thomas invariants of projective threefolds.
Pandharipande--Thomas \cite{pandharipandethomasstablepairs} introduced a variant of Donaldson--Thomas invariants which is easier to work with and conjectured a relation between the two, which was later proven by Bridgeland \cite{bridgelandhall}.
Given this equivalence, the MNOP conjecture may be stated as a relation between Gromov--Witten and Pandharipande--Thomas invariants, and it is this form of the conjecture that we will consider here.

Gromov--Witten invariants and Donaldson--Pandharipande--Thomas invariants describe curves in fundamentally different ways: Gromov--Witten moduli spaces describe curves in $X$ as images of maps from nodal domain curves to $X$, whereas Donaldson--Pandharipande--Thomas moduli spaces describe curves in $X$ as zero sets of functions on $X$.
These moduli spaces agree on the set of smooth embedded curves $C\subseteq X$, but are rather different compactifications of this space.
The MNOP conjecture is interesting because there is no known or even proposed geometric relation between these two different ways of compactifying.

Let us briefly recall the definition of Gromov--Witten and Pandharipande--Thomas invariants, leaving a more detailed discussion to Section \ref{zgwptsummary}.
Given a complex projective threefold $X$, a homology class $\beta\in H_2(X)$, and cohomology classes $\gamma_1,\ldots,\gamma_r\in H^*(X)$, these invariants have the form
\begin{align}
\GW(X,\beta;\gamma_1,\ldots,\gamma_r)&=\int_{[\cMbarprime(X,\beta)]^\vir}{\textstyle\prod\limits_{i=1}^r}\pi_!\ev^*\gamma_i\cdot u^{-\chi}\in\QQ((u)),\\
\PT(X,\beta;\gamma_1,\ldots,\gamma_r)&=\int_{[P(X,\beta)]^\vir}{\textstyle\prod\limits_{i=1}^r}\pi_!(\ch_2(\FF)\cup\ev^*\gamma_i)\cdot q^n\in\ZZ((q)).
\end{align}
For Gromov--Witten invariants, $\cMbarprime(X,\beta)$ is the moduli space of stable maps from (not necessarily connected) nodal curves $C$ to $X$, in homology class $\beta$, all of whose connected components are non-constant, and $\chi$ denotes the arithmetic Euler characteristic $2\cdot\chi(C,\cO_C)$ of the domain (locally constant, proper superlevel sets).
For Pandharipande--Thomas invariants, $P(X,\beta)$ denotes the moduli space of stable pairs in homology class $\beta$, and $n$ denotes the holomorphic Euler characteristic (locally constant, proper sublevel sets).
The integrands are given by push/pull via the universal families over these moduli spaces.
More generally, we could consider the family invariants
\begin{align}
\GW(X/B;\gamma)&=\int_{[\cMbarprime(X/B)]^\vir}\gamma\cdot u^{-\chi}\in\QQ((u)),\\
\PT(X/B;\gamma)&=\int_{[P(X/B)]^\vir}\gamma\cdot q^n\in\ZZ((q)),
\end{align}
for any relative dimension three smooth morphism of varieties $X\to B$ and any compactly supported cohomology class $\gamma\in H^*_c(\cZ(X/B)/B)$.
These reduce to the invariants of $(X,\beta;\gamma_1,\ldots,\gamma_r)$ defined above in the case $B=*$ and $\gamma=1_\beta\prod_i\pi_!\ev^*\gamma_i\in H^*_c(\cZ(X))$ (see Section \ref{zgwptsummary} for more details).

Now let us say that a pair of formal Laurent series $\GW\in\QQ((u))$ and $\PT\in\ZZ((q))$ satisfies the \emph{MNOP correspondence} with weight $k$ when $\PT$ is a rational function of $q$ and the evaluation of $(-q)^{-k/2}\PT$ at $-q=e^{iu}$ equals $(-iu)^k\GW$.

\begin{conjecture}[\cite{mnopi,mnopii,pandharipandethomasstablepairs}]\label{mainconjecture}
For any projective threefold $X$, any homology class $\beta\in H_2(X)$, and any tuple of cohomology classes $\gamma_1,\ldots,\gamma_r\in H^*(X)$, the invariants
\begin{equation}
\GW(X,\beta;\gamma_1,\ldots,\gamma_r)\text{ and }\PT(X,\beta;\gamma_1,\ldots,\gamma_r)
\end{equation}
satisfy the MNOP correspondence with weight $k=\langle c_1(TX),\beta\rangle$.
More generally, so do $\GW(X/B;\gamma)$ and $\PT(X/B;\gamma)$ for any relative dimension three smooth morphism of varieties $X\to B$ and any $\gamma\in H^*_c(\cZ(X/B)/B)$ of chern number $k$.
\end{conjecture}

Conjecture \ref{mainconjecture} is known in many cases, essentially by computing both sides of the equality.
The case of (equivariant invariants of) local curves holds by deep calculations of Bryan--Pandharipande \cite{bryanpandharipande} (of Gromov--Witten invariants) and Okounkov--Pandharipande \cite{okounkovpandharipande} (of Donaldson--Thomas invariants and, by \cite[Section 5]{mptmodular}, Pandharipande--Thomas invariants).
Work of Maulik--Oblomkov--Okounkov--Pandharipande \cite{mooptoric} established the conjecture for toric varieties by direct computation of both sides.
Work of Pandharipande--Pixton \cite{ppmany} showed the result for many threefolds (e.g.\ complete intersections in products of projective spaces) by degeneration to the toric case.

To bring Theorem \ref{maincalculation} to bear on Conjecture \ref{mainconjecture}, we note that Gromov--Witten invariants and Pandharipande--Thomas invariants define ring homomorphisms out of the Grothendieck group
\begin{align}
\label{grothgw}\GW:H^*_c(\cZ/\Cpx_3)&\to\QQ((u))\\
\label{grothpt}\PT:H^*_c(\cZ/\Cpx_3)&\to\ZZ((q))
\end{align}
(see Section \ref{zgwptsummary}).
Conjecture \ref{mainconjecture} amounts to the assertion that these homomorphisms satisfy the MNOP correspondence with weight $k$ when evaluated on certain elements of the Grothendieck group of 1-cycles $H^*_c(\cZ/\Cpx_3)$ of chern number $k$.

\begin{theorem}\label{mainresult}
For any element $\gamma\in H^*_c(\cZ_\semiFano/\Cpx_3)$ of chern number $k$, the power series $\GW(\gamma)$ and $\PT(\gamma)$ satisfy the MNOP correspondence with weight $k$.
In particular, Conjecture \ref{mainconjecture} holds if $X$ (or, more generally, every fiber of $X\to B$) is semi-Fano (meaning the evaluation of $c_1$ on every curve is $\geq 0$).
\end{theorem}

\begin{proof}
The results of \cite{bryanpandharipande,okounkovpandharipande} imply that $\GW$ and $\PT$ satisfy the MNOP correspondence with weight $k$ when evaluated on any equivariant local curve element $x_{g,m,k}$ (see Corollary \ref{mnoplocalgrothendieck}).
By Theorem \ref{maincalculation} (or just the surjectivity part, Theorem \ref{mainsurjectivity}), this implies they satisfy the MNOP correspondence (weighted by chern number) on all of $H^*_c(\cZ_\semiFano/\Cpx_3)$ (note that, for degree reasons, $\GW$ and $\PT$ are only nonzero on the virtual dimension zero part of this group).
\end{proof}

This approach to Conjecture \ref{mainconjecture} is similar in spirit to \cite{ppmany} in that in essence we deform to a simpler situation (specifically, a disjoint union of local curves) where the result is already known (and the strength of Theorem \ref{maincalculation} allows us to obtain a stronger result from a weaker input).
Note that our result applies to family enumeration problems, while these are mostly inaccessible by other degeneration/deformation arguments.

\begin{remark}
The definition of the Gromov--Witten and Pandharipande--Thomas invariants on the Grothendieck group $H^*_c(\cZ/\Cpx_3)$ (that is, the ring homomorphisms \eqref{grothgw}--\eqref{grothpt}) as used in the proof of Theorem \ref{mainresult} requires a definition of the moduli spaces of stable maps $\cMbarprime(X/B)$ and stable pairs $P(X/B)$ for arbitrary separated submersions of reduced complex analytic spaces $X\to B$ of relative dimension three (see Sections \ref{vfcsubsec}--\ref{zgwptsummary}).
While the standard constructions of these moduli spaces assume that $X\to B$ is a smooth separated morphism of schemes over $\CC$, we believe that there is no essential difficulty in translating everything to the complex analytic setting by substituting the theory of Douady spaces \cite{douady} in place of the theory of Hilbert schemes.
Unfortunately, we do not know a reference for this.
\end{remark}

The case of descendent insertions is conspicuously missing from this discussion.
We do not know whether Theorem \ref{mainresult} says anything about descendent insertions:

\begin{question}\label{descendentquestion}
How many descendent insertions arise via pullback under the maps $\cMbarprime(X)\to\cZ(X)$ and $P(X)\to\cZ(X)$ (as the primary insertions do)?
Does $\cU(X/B)$ carry any non-trivial natural (i.e.\ compatible with pullback) cohomology classes $\xi$, and, if so, how are the resulting insertions $\pi_!(\xi\cup\ev^*\gamma)$ related to descendents?
Is there an enhancement of $\cZ(X)$ which receives maps from $\cMbarprime(X)$ and $P(X)$ and for which the answer to these questions is better?
\end{question}

\subsection{Acknowledgements}

I gratefully acknowledge conversations related to the subject of this paper with Shaoyun Bai, Jim Bryan, Mike Miller Eismeier, Rahul Pandharipande, Dhruv Ranganathan, Will Sawin, Mohan Swaminathan, Felix Thimm, Richard Thomas, and Tony Yue Yu.
The comments from the two anonymous referees have greatly improved the exposition.
The author received support from the Packard Foundation (Fellowship for Science and Engineering) and the National Science Foundation (Alan T.\ Waterman Award, Grant No.\ 1747553).

\section{Background}

\subsection{Spaces of 1-cycles}\label{spaceofcycles}

A (compact holomorphic) \emph{1-cycle} $z$ on a complex analytic manifold $X$ is a formal non-negative integer linear combination of irreducible compact 1-dimensional subvarieties $C\subseteq X$.
Such a cycle will usually be written as a finite sum $z=\sum_im_iC_i$ where it is implicitly assumed that the $C_i\subseteq X$ are distinct irreducible curves and all $m_i>0$.
The set of such 1-cycles is denoted $\cZ(X)$ (more systematically, this would be denoted $\cZ_1(X)$, but we will not consider $r$-cycles $\cZ_r(X)$ for any $r$ other than $1$ in this text, so we drop the subscript from the notation).

The \emph{chern number} of a cycle refers to its pairing with $c_1(TX)$, so the chern number of $z=\sum_im_iC_i$ is $\langle c_1(TX),z\rangle=\sum_im_i\langle c_1(TX),C_i\rangle$.
A cycle $z=\sum_im_iC_i$ is called \emph{semi-Fano} when all $C_i$ have non-negative chern number (it bears emphasis that this is strictly stronger than $z$ itself having non-negative chern number).
We denote by $\cZ(X,k)\subseteq\cZ(X)$ the set of cycles with chern number $k$, and we denote by $\cZ(X)_\semiFano\subseteq\cZ(X)$ the set of semi-Fano cycles.
We also denote by $\cZ(X,\beta)\subseteq\cZ(X)$ the set of cycles in homology class $\beta\in H_2(X)$ (which, we should warn, somewhat conflicts with the notation $\cZ(X,k)$ for cycles of chern number $k$).

The set $\cZ(X)$ can be given the structure of a separated reduced complex analytic space due to work of Barlet \cite{barlet} (by `complex analytic space' we mean locally an analytic subspace of an open set in $\CC^N$ for some $N<\infty$); this is a complex analytic analogue of the theory of Chow varieties.
By definition, an analytic map $A\to\cZ(X)$ from a reduced complex analytic space $A$ is a family of 1-cycles $\{z_a\in\cZ(X)\}_{a\in A}$ which satisfies the following analyticity condition \cite[Chapitre 1, \S1, Définition fondamentale]{barlet}.
First, we ask that $a\mapsto z_a$ be continuous in the Hausdorff topology on compact subsets of $X$ (concretely, this means that every $a\in A$ has a small neighborhood over which the cycles $z_{a'}$ are all contained in any given neighborhood of $z_a$ and intersect any given neighborhood of any given point of $z_a$).
Second, we ask that for any chart $\DD\tighttimes\DD^2\subseteq X$ (where $\DD\subseteq\CC$ denotes the closed unit disk), the map
\begin{align}
A\times\DD^\circ&\to\bigsqcup_k\Sym^k\CC\\
(a,q)&\mapsto f(z_a\cap(\{q\}\tighttimes\DD^2))
\end{align}
(intersection taken with multiplicity) where $f:\DD\times\DD^2\to\CC$ is any analytic function, should be analytic over the open set of $(a,p)$ for which $z_a\cap(\{q\}\tighttimes\partial(\DD^2))=\varnothing$, where analytic with target $\Sym^k\CC$ just means its composition with any symmetric polynomial $\CC^k\to\CC$ is analytic.
For any analytic family of cycles $\{z_a\in\cZ(X)\}_{a\in A}$, the `total space' $\bigcup_a\{a\}\tighttimes z_a\subseteq A\tighttimes Z$ is a closed analytic subvariety (and, conversely, when $A$ is normal, specifying an analytic family of cycles $\{z_a\in\cZ(X)\}_{a\in A}$ is equivalent specifying its total space inside $A\times Z$ along with multiplicities for every irreducible component thereof, see \cite[Chapitre 1, \S2, Théorème 1]{barlet} for a precise statement and proof).
In particular, there is a `universal family' $\cU(X)\subseteq X\times\cZ(X)$.

The homology class function $\cZ(X)\to H_2(X)$ is locally constant; that is, the locus $\cZ(X,\beta)\subseteq\cZ(X)$ of cycles in homology class $\beta\in H_2(X)$ is open (hence also closed).
In partcular, the subset $\cZ(X,k)\subseteq\cZ(X)$ of cycles with chern number $k$ is open (hence also closed).
The subset $\cZ(X)_\semiFano\subseteq\cZ(X)$ is also open.

This discussion generalizes readily to the relative setting.
Given a holomorphic submersion $X\to B$, we define $\cZ(X/B)=\bigcup_b\cZ(X_b)$ to be the set of cycles in fibers of $X\to B$.
It is an open subset of $\cZ(X)$, so the basic properties of $\cZ(X)$ pass easily to $\cZ(X/B)$.

\begin{definition}[Multiplicity stratification]\label{multfilt}
Let $\bM=\bigsqcup_{n\geq 0}\ZZ_{\geq 1}^n/S_n$ be the set of finite multi-sets of positive integers.
Partially order $\bM$ by declaring that $\bm\geq\bm'$ whenever we may express every $m_i\in\bm$ as a non-negative integer linear combination $m_i=\sum_jn_{ij}m_j'$ of the $m_j'\in\bm'$ in such a way that every $m_j'$ is used at least once (in other words, $\sum_in_{ij}>0$ for every $j$).
It is evident that if $\bm\geq\bm'$ then $\sum_im_i\geq\sum_jm_j'$; moreover, if $\bm\geq\bm'$ and $\sum_im_i=\sum_jm_j'$, then $\bm'$ is obtained from $\bm$ by replacing each $m_i$ with a partition thereof.
It is now not hard to show that the relation $\geq$ is anti-symmetric, hence is indeed a partial order as claimed.

There is a map $\cZ\to\bM$ assigning to each cycle $z=\sum_im_iC_i$ the multi-set $\bm$ of multiplicities $m_i$.
It is not hard to check that this map is upper semi-continuous (that is, the loci $\cZ_{\leq\bm}\subseteq\cZ$ are open): if $z=\sum_im_iC_i$ is a 1-cycle and $z'=\sum_jm_j'C_j'$ is sufficiently close to $z$, then looking at $z'$ near a smooth point of $C_i$ yields a decomposition $m_i=\sum_jn_{ij}m_j'$, and every $C_j'$ is close to some $C_i$ hence has some positive $n_{ij}$.
Morever, if $z'$ is sufficiently close to $z=\sum_im_iC_i$ and has the same multiplicity, then $z'=\sum_im_iC_i'$ for some curves $C_i'$ close to $C_i$.
\end{definition}

\subsection{Bivariant theories}\label{bivariantthry}

A \emph{bivariant theory} is an invariant of maps $W\to B$.
Bivariant theories come in two dual flavors: those which `take cohomology of the base with coefficients in fiberwise chains', and those which `take homology of the base with coefficients in fiberwise cochains'.
A reference for bivariant theories is Fulton--MacPherson \cite{fultonmacphersonbivariant}.
We take the following as our definition:

\begin{definition}\label{bivariantmaindef}
We associate to a map $\pi:W\to B$ the groups
\begin{align}
H^*_c(W/B)&:=H^*_c(W,\pi^*\omega_B)=a_!\pi_!\pi^*a^!\ZZ\\
H_{-*}^\relinfty(W/B)&:=H^*(W,\pi^!\underline\ZZ_B)=a_*\pi_*\pi^!a^*\ZZ
\end{align}
where $a:B\to *$ (the sheaf functors $(f^*,f_*)$ and $(f_!,f^!)$ are all `derived'.)
Of course, one could consider any other coefficient group in place of $\ZZ$.

We also, define $H^*_K(W/B)=H^*_K(W,\pi^*\omega_B)$ (cohomology with supports in $K$) for any compact $K\subseteq W$.
\end{definition}

This definition makes sense in any context where we have the derived functors $(f^*,f_*)$ and $(f_!,f^!)$ on sheaves.
We will consider here the case of sheaves of complexes of abelian groups on nice (say, locally triangulable) topological spaces (except in Appendix \ref{vfc} where we consider such sheaves on complex analytic stacks).

Intuitively speaking, we think of $H_*^\relinfty(W/B)$ as `cohomology of $B$ with coefficients in fiberwise chains rel infinity on $W\to B$', and we think of $H^*_c(W/B)$ as `homology of $B$ with coefficients in fiberwise compactly supported cochains on $W\to B$'.
Note that the validity of these interpretations (specifically, the interpretation of upper shriek as homology) relies upon the map $W\to B$ and the space $B$ (respectively) being sufficiently nice (e.g.\ finite-dimensional and separated).

We warn the reader that these groups are, in general, supported in arbitrarily positive and arbitrarily negative degrees (for the simple reason that they combine homology and cohomology, which are unbounded in opposite directions).

When either $B$ is a manifold or $W\to B$ is a `manifold bundle' (i.e.\ locally a product with $\RR^n$ for some $n$), then the bivariant groups $H_*^\relinfty(W/B)$ and $H^*_c(W/B)$ may be identified with ordinary homology and cohomology, via a Poincaré duality type isomorphism.
Precisely, if $B$ is an oriented manifold of dimension $n$, then we have canonical isomorphisms $H^*_c(W/B)=H^{*+n}_c(W)$ and $H_*^\relinfty(W/B)=H_{*+n}^\relinfty(W)$.
If $W\to B$ is a relatively oriented manifold bundle of relative dimension $n$, then we have canonical isomorphisms $H^*_c(W/B)=H_{n-*}(W)$ and $H_*^\relinfty(W/B)=H^{n-*}(W)$.

The bivariant groups $H^*_c$ and $H_*^\relinfty$ have the following functoriality (references include \cite{khanvirtual,portayu}):

\begin{definition}\label{bivariantfunctoriality}
Fix $W\to B$ and $W'\to B'$.
Given a map $\alpha:B\to B'$ and a proper map $q:W\times_BB'\to W'$ over $B'$, there are induced maps $\alpha_*q^*:H^*_c(W'/B')\to H^*_c(W/B)$ and $q_*\alpha^*:H_*^\relinfty(W/B)\to H_*^\relinfty(W'/B')$.
\begin{equation}
\begin{tikzcd}
W\times_BB'\ar[dr,"\pi_{B'}"']\ar[r,"q"',"\mathrm{proper}"]\ar[rr,bend left,"\beta"]&W'\ar[d,"\pi'"]&W\ar[d,"\pi"]\\
{}&B'\ar[r,"\alpha"]&B
\end{tikzcd}
\quad\leadsto\quad
\newlength\Hclength
\settowidth\Hclength{$H^*_c$}
\newlength\Hrelinftylength
\settowidth\Hrelinftylength{$H_*^\relinfty$}
\begin{tikzcd}[cramped,row sep=0ex]
\hspace\Hrelinftylength\hspace{-\Hclength}H^*_c(W'/B')\ar[r,"\alpha_*q^*"]&\hspace\Hrelinftylength\hspace{-\Hclength}H^*_c(W/B)\\
H_*^\relinfty(W'/B')&\ar[l,"q_*\alpha^*"]H_*^\relinfty(W/B)
\end{tikzcd}
\end{equation}
The map $H^*_c(W'/B')\to H^*_c(W/B)$ is defined as follows.
It suffices to define a map $\alpha_!\pi'_!(\pi')^*\alpha^!\to\pi_!\pi^*$.
We have the unit transformation $1\to q_*q^*$, and $q_*=q_!$ since $q$ is proper, which gives a map $\pi'_!(\pi')^*\to\pi'_!q_!q^*(\pi')^*=(\pi_{B'})_!\pi_{B'}^*$, so it is enough to define a map $\alpha_!(\pi_{B'})_!\pi_{B'}^*\alpha^!\to\pi_!\pi^*$.
For this, it is enough to define a map $\beta_!\pi_{B'}^*\alpha^!\to\pi^*$, which by the adjunction $(\alpha_!,\alpha^!)$ is equivalent to specifying a map $\beta_!(\pi_{B'})^*\to\pi^*\alpha_!$.
Such a natural transformation is given by the inverse to the base change morphism $\pi^*\alpha_!\xrightarrow\sim\beta_!\pi_{B'}^*$, which is an isomorphism by the Proper Base Change Theorem.
The map $H_*^\relinfty(W/B)\to H_*^\relinfty(W'/B')$ is defined analogously: we want a map $\pi_*\pi^!\to\alpha_*\pi'_*(\pi')^!\alpha^*$, which comes from combining the counit $q_!q^!\to 1$, the isomorphism $q_*=q_!$ from properness of $q$, and the natural transformation $(\pi_{B'})_!\beta^*\to\alpha^*\pi_!$ inverse to the proper base change morphism.

This recipe is compatible with composition for triples $W\to B$, $W'\to B'$, $W''\to B''$, since the proper base change morphism is compatible with composition of pullback squares.
\end{definition}

There are canonical (commutative and associative) Künneth morphisms
\begin{align}
H^*_c(W/B)\otimes H^*_c(W'/B')&\to H^*_c(W\times W'/B\times B')\\
H_*^\relinfty(W/B)\otimes H_*^\relinfty(W'/B')&\to H_*^\relinfty(W\times W'/B\times B')
\end{align}
which are isomorphisms modulo torsion (better, are isomorphisms if we use the derived tensor product).
These may be defined using the symmetric monoidal properties of the sheaf functors $(f^*,f_*)$ and $(f_!,f^!)$ and proper base change.

\subsection{Simplicial complexes}

Recall that a \emph{simplicial complex} $T$ consists of a set $V(T)$ of `vertices' and a set $\Sigma(T)\subseteq 2^{V(T)}_\fin\setminus\{\varnothing\}$ of `simplices' satisfying $A\subseteq B\in\Sigma(T)\implies A\in\Sigma(T)$.
The \emph{geometric realization} of a simplicial complex $T$ is the subspace $T_\RR\subseteq\RR^{V(T)}$ consisting of those tuples $x:V(T)\to\RR_{\geq 0}$ satisfying $\sum_vx(v)=1$ and for which $\supp x\subseteq V(T)$ lies in $\Sigma(T)$.

A \emph{locally ordered} simplicial complex is a simplicial complex $T$ together with, for every simplex $\sigma\in\Sigma(T)$, a total ordering of its vertices, which is compatible with inclusions of simplices.
The product of locally ordered simplicial complexes $T$ and $T'$ is another locally ordered simplicial complex $T\times T'$ with vertex set $V(T\times T')=V(T)\times V(T')$, and a finite subset $S\subseteq V(T)\times V(T')$ is a simplex of $T\times T'$ when its projections to $V(T)$ and $V(T')$ are simplices of $T$ and $T'$, and $S$ is a totally ordered subset of their product (it is then equipped with the restriction of the ordering on said product).
The natural map $(T\times T')_\RR\to T_\RR\times T'_\RR$ is a homeomorphism (at least) when $T$ and $T'$ are both finite.

For any simplicial complex $T$, its \emph{barycentric subdivision} $bT$ has vertex set $V(bT)=\Sigma(T)$ (the set of simplices of $T$), and a finite subset of $V(bT)$ spans a simplex of $bT$ iff it is totally ordered (nested) as a subset of $\Sigma(T)$ (this ordering means $bT$ is in fact a locally ordered simplicial complex).
There is a natural homeomorphism $(bT)_\RR=T_\RR$ (at least) for finite $T$.
Iterative barycentric subdivisions become \emph{arbitrarily fine} when $T$ is finite: for any open cover of $T_\RR$, there is some $n<\infty$ for which every simplex of the $n$th iterated barycentric subdivision $b^nT$ is contained in some element of the open cover.

There is a natural `bordism' or `concordance' between $T$ and $bT$.
It is the simplicial complex with vertex set $V(T)\sqcup V(bT)=V(T)\sqcup\Sigma(T)$ and in which a simplex is a pair of simplices $A\in\Sigma(T)$ and $B\in\Sigma(bT)$ for which every element of $B$ (a vertex of $bT$, that is a simplex of $T$) contains $A$.
The geometric realization of this bordism $bT$ is naturally identified with $[0,1]\times\abs T$ (at least for $T$ finite).
A local ordering of $T$ induces one on this bordism, by taking the orderings on $T$ and $bT$ and declaring that all vertices of $T$ lie below vertices of $bT$.

A \emph{map} of simplicial complexes $T\to T'$ is a map on vertex sets $V(T)\hookrightarrow V(T')$ which sends simplices to simplices.

\subsection{Complex geometry over simplicial complexes}\label{complexoversimplex}

We import simplicial complexes into complex analytic geometry as follows.
For a finite simplicial complex $T$, we define $T_\CC\subseteq\CC^{V(T)}$ to consist of those tuples $x:V(T)\to\CC$ satisfying $\sum_vx(v)=1$ and for which $\supp x\subseteq V(T)$ lies in $\Sigma(T)$.
We regard $T_\CC\subseteq\CC^{V(T)}$ a complex analytic subspace (defined by the ideal of all analytic functions vanishing on it) and refer to it as the \emph{complexified geometric realization} (or just \emph{complexification}) of $T$.
There is an evident inclusion $T_\RR\subseteq T_\CC$, and for many later purposes it is the germ of $T_\CC$ around $T_\RR$ that is most relevant.

When working with complexified simplicial complexes $T_\CC$, it will be convenient to argue over each simplex individually (for example, inductively).
Here is the foundational fact which allows us to do this:

\begin{lemma}\label{functionextension}
A (locally defined) analytic function on the union of hyperplanes $\{x_1\cdots x_n=0\}\times\CC^m\subseteq\CC^n\times\CC^m$ (analytic in the sense that its restriction to each hyperplane $\{x_i=0\}\times\CC^m\subseteq\CC^n\times\CC^m$ is analytic) is (locally) the restriction of a (locally defined) analytic function on $\CC^n\times\CC^m$.
\end{lemma}

\begin{proof}
Given $f$ defined on the union of hyperplanes, an extension is given by the usual inclusion-exclusion formula $\bar f(x_1,\ldots,x_n,y)=\sum_{\varnothing\ne S\subseteq\{1,\ldots,n\}}(-1)^{\abs S-1}f(\{x_i\}_{i\notin S},\{0\}_{i\in S},y)$ for $x_i\in\CC$ and $y\in\CC^m$.
\end{proof}

\begin{corollary}\label{gluinghyperplanes}
The closed analytic subspace $\{x_1\cdots x_n=0\}\times\CC^m\subseteq\CC^n\times\CC^m$ is the colimit of the coordinate subspaces $\CC^S\times 0^{\{1,\ldots,n\}\setminus S}\times\CC^m$ over proper subsets $S\subsetneqq\{1,\ldots,n\}$.
\end{corollary}

\begin{proof}
The universal property of being a colimit is a question about morphisms to any complex analytic space.
Since every complex analytic space is a (finite) limit of open subsets of $\CC$, limits commute with limits, and morphisms of complex analytic spaces are of local nature (are sheaves), it suffices to check the colimit condition on morphisms to $\CC$.
That is, we should check that the sheaf of analytic functions on $\{x_1\cdots x_n=0\}\times\CC^m$ is the limit of the sheaves of compatible (that is, agreeing via restriction) families of analytic functions on the coordinate subspaces $\CC^S\times\CC^{\{1,\ldots,n\}\setminus S}\times\CC^m$.
Injectivity is immediate, and surjectivity is Lemma \ref{functionextension}.
\end{proof}

\begin{corollary}\label{simplexcolimits}
$T_\CC$ is the colimit of its simplices $\sigma_\CC$ (over the diagram indexed by $\sigma\in\Sigma(T)$).
\end{corollary}

\begin{proof}
The assertion is local on the underlying topological space of $T_\CC$, where it reduces to Corollary \ref{gluinghyperplanes}.
\end{proof}

\begin{definition}[Family of complex manifolds]\label{familysimplicialfinite}
Recall that a \emph{submersion} (or \emph{smooth morphism}) of complex analytic spaces is a morphism $X\to B$ which is source-locally modelled on (open subsets of) $A\times\CC^N\to A$ for complex analytic spaces $A$.
A \emph{family (of complex manifolds)} parameterized by (or `over') $B$ is a separated submersion $X\to B$ (this term is somewhat dubious since we have not imposed any properness condition).
We shall also want to consider families of complex manifolds parmeterized by simplicial complexes $B$, by which we mean a family over the complexification $B_\CC$.
\end{definition}

Note that a family of complex manifolds over an open subset $B^-\subseteq B$ is also a family over $B$.
Given a family $X\to B$ over a complex analytic space $B$, we may wish to pull it back to a finite simplicial complex $T$ along a simplex-wise real analytic map to $B$.
Note that, according to Corollary \ref{simplexcolimits}, a simplex-wise real analytic map $T\to B$ is the same thing as a complex analytic map $T_\CC\to B$ defined in a neighborhood of $T_\RR$.
The pullback $X\times_BT\to T$ now depends on a choice of neighborhood of $T_\RR\subseteq T_\CC$ over which the map to $B$ is defined and analytic, but when we appeal to this construction, it will be clear that this choice does not matter.

Now let us argue that a family over a simplicial complex is the same as a compatible collection of families over each of its simplices:

\begin{lemma}\label{familysimplicialtotalspace}
Let $T$ be a finite simplicial complex.
The total space of a family of complex manifolds over $T$ is the colimit of its restrictions to each simplex.
The category of families of complex manifolds over $T$ is equivalent (via the canonical functor) to the limit of the categories of families of complex manifolds over each of its simplices.
\end{lemma}

\begin{proof}
Let $X\to T$ be a family.
That its total space is the colimit of its restrictions to simplices follows from Corollary \ref{gluinghyperplanes} (applied in a local trivialization of $X\to T$).

To check that the functor is fully faithful, we should compare morphisms over $T_\CC$ with compatible systems of morphisms over its simplices.
That these give the same result follows from applying the universal property of colimits to the previous assertion (the total space is the colimit of its restrictions to simplices).

Now let us check that the functor is essentially surjective.
This means that we should glue together a system of families $X_\sigma\to\sigma$ over each simplex $\sigma\subseteq T$ into a family $X\to T$.
By the neighborhood comparison result above, we may assume that we have a single neighborhood of $T_\RR\subseteq T_\CC^\aff$ and that the families $X_\sigma\to\sigma$, the restriction isomorphisms $X_\sigma\xrightarrow\sim X_\tau|_\sigma$ for $\sigma\subseteq\tau$, and the cocycle condition for $\sigma\subseteq\tau\subseteq\upsilon$ exist over the intersection of $\sigma_\CC^\aff$ with this neighborhood.
Now we claim that the colimit of the families $X_\sigma\to\sigma$ over these particular neighborhoods $\sigma_\CC$ provides the desired family $X\to T_\CC$.
This question is local on the topological colimit.
Locally near any point of the topological colimit, we can find compatible local trivializations of the families $X_\sigma\to\sigma_\CC$ by induction on simplices and applying Lemma \ref{functionextension}.
The colimit then exists and has the desired form by Corollary \ref{gluinghyperplanes}.
\end{proof}

Given a family of complex manifolds $X\to B$ over a finite simplicial complex $B$, there is an induced family $bX\to bB$ over the barycentric subdivision $bB$ of $B$, obtained by pulling back $X\to B$ under the canonical map $bB\to B$ (on geometric realizations, which, since it is linear on each simplex, extends to their complexifications).
There is moreover a concordance between $X\to B$ and $bX\to bB$ obtained by pulling back $X\to B$ under the map to $B$ from the concordance between $B$ and $bB$.

\section{Grothendieck group of 1-cycles}\label{grothendieckdefsec}

The goal of this section is to define the Grothendieck group of 1-cycles in complex threefolds $H^*_c(\cZ/\Cpx_3)$ and to establish some of its basic properties.

\subsection{Main definition}\label{grothtechdefsec}

Morally speaking, we would like to define
\begin{equation}\label{grothpredef}
H^*_c(\cZ/\Cpx_3)=H^*_c(\cZ(\Univ_3/\Cpx_3)/\Cpx_3)
\end{equation}
as the bivariant group $H^*_c$ (Definition \ref{bivariantmaindef}) of the relative cycle space $\cZ(\Univ_3/\Cpx_3)\to\Cpx_3$ (see Section \ref{spaceofcycles}) of the universal family $\Univ_3\to\Cpx_3$ over the moduli stack $\Cpx_3$ of complex threefolds (defined by the property that a map $B\to\Cpx_3$ from any complex analytic space $B$ is a smooth separated morphism of relative dimension three $X\to B$).
However, we wish to avoid the technicalities involved in making sense of the bivariant theory $H^*_c$ for complex analytic stacks (for example, the infinite-dimensionality of $\Cpx_3$ means the description of its homology via the $a^!$ functor needs adjustment), so we will instead take a (directed) colimit of the groups $H^*_c(\cZ(X/B)/B)$ over families of threefolds $X\to B$.
For reasons already mentioned in the Introduction, we also choose to replace what would perhaps be the `default' class of base spaces $B$, namely complex analytic spaces, with finite simplicial complexes.
Recall from Section \ref{complexoversimplex} the notion of a family of complex threefolds $X\to B$ over a simplicial complex $B$.
For any such family, there is a relative cycle space $\cZ(X/B)\to B$ as defined in Section \ref{spaceofcycles} (that is, take the relative cycle space of the complexification $X_\CC\to B_\CC$ and then pull it back to the geometric realization $B_\RR$).

\begin{definition}[Grothendieck group of 1-cycles in complex threefolds; compare Definition \ref{grothintrodef}]\label{grothgpdef}
The group $H^*_c(\cZ/\Cpx_3)$ defined as the directed colimit
\begin{equation}
H^*_c(\cZ/\Cpx_3):=\dircolim_{X\to B}H^*_c(\cZ(X/B)/B)
\end{equation}
over the category whose objects are (not necessarily proper) families of complex threefolds $X\to B$ over finite simplicial complexes $B$ (in the sense of Definition \ref{familysimplicialfinite}) and whose morphisms are commuting squares
\begin{equation}
\begin{tikzcd}
X'\ar[r]\ar[d]&X\ar[d]\\
B'\ar[r]&B
\end{tikzcd}
\end{equation}
where $B'\to B$ is an \emph{injective} map of simplicial complexes and for which the induced map $X'\to X\times_BB'$ is an \emph{open embedding}.
Dually we define $H_*^\relinfty(\cZ/\Cpx_3)_\naive$ as the inverse limit of $H_*^\relinfty(\cZ(X/B)/B)$ over (the opposite of) this same indexing category (due to the non-exactness of inverse limits, this group carries the $_\naive$ subscript and is slightly more technical to work with).
\end{definition}

Before proceeding further, we must rectify a small lie in this definition.
The category of families of complex threefolds over finite simplicial complexes is actually \emph{not} directed.
But, the diagrams we shall consider over it all factor through a directed quotient of it, as we now show in Lemma \ref{familyfiltered}.
When we write directed colimit over families of threefolds over finite simplicial complexes, we will always mean the colimit over this directed quotient.
Directedness will be of crucial importance later (directed colimits have much better properties than general colimits).

\begin{lemma}\label{familyfiltered}
Consider the category of families of threefolds $X\to B$ over finite simplicial complexes $B$ as in Definition \ref{grothgpdef}.
Identify two morphisms $f,g:B'\to B$ when they extend to a morphism $B'\times[0,1]\to B$, where the product $B'\times[0,1]$ is taken with respect to some local ordering on $B'$ (this relation is respected by composition, hence we still have a category).
This new category is directed.
\end{lemma}

\begin{proof}
Recall that a category is called directed (aka filtered) when (1) it is non-empty, (2) for every pair of objects $X$ and $Y$, there exists a pair of maps $X\to Z\leftarrow Y$ to a common object $Z$, and (3) for every pair of morphisms $f,g:X\arrowstack{r,r}Y$, there exists a morphism $Y\to Z$ for which the two compositions $X\arrowstack{r,r}Y\to Z$ coincide.

Condition (1) is immediate (take $B=\varnothing$).

Condition (2) is achieved by taking the disjoint union of families $X\to B$ and $X'\to B'$, embedding both into $X\sqcup X'\to B\sqcup B'$.

Condition (3) is achieved as follows.
Suppose given two injective maps $f,g:B'\arrowstack{r,r}B$ (covered by maps between the families $X'\to B'$ and $X\to B$).
Naturally, we just want to consider the inclusion $B\to B^+$ where $B^+$ is obtained from $B$ by gluing $B'\times[0,1]$ (with respect to an arbitrary local ordering on $B'$) to it along $f$ and $g$ along $B'\times\{0\}$ and $B'\times\{1\}$, respectively, where we equip $B'\times[0,1]$ with the family given by the union of $f^*X\times[0,\frac 12)$, $g^*X\times(\frac 12,1]$, and $X'\times[0,1]$ (glued together in the evident way).
We cannot quite glue in $B'\times[0,1]$ to $B$ in this way in the world of simplicial complexes (the issue is that a given set of vertices may not bound more than one simplex), but it works if we instead stack together three copies of $B'\times[0,1]$ and glue the two extreme ends using $f$ and $g$.
\end{proof}

\begin{lemma}\label{subdivisiongrothendieck}
For any family of threefolds $X\to B$ over a finite simplicial complex $B$, the following diagram commutes:
\begin{equation}
\begin{tikzcd}[row sep=tiny]
H^*_c(\cZ(X/B)/B)\ar[rd]\ar[dd,equals]\\
&H^*_c(\cZ/\Cpx_3)\\
H^*_c(\cZ(bX/bB)/bB)\ar[ru]
\end{tikzcd}
\end{equation}
where $bX\to bB$ denotes the induced family over the barycentric subdivision (see Section \ref{complexoversimplex}).
\end{lemma}

\begin{proof}
Consider the concordance between $X\to B$ and $bX\to bB$, noting that its relative cycle space is $\cZ(X/B)\times[0,1]$.
\end{proof}

Now let us remark on the relation between the bivariant groups $H^*_c(\cZ(X/B)/B)$ for families of threefolds $X\to B$ over complex analytic spaces $B$ and the Grothendieck group of 1-cycles $H^*_c(\cZ/\Cpx_3)$.
Suppose we are given a \emph{real analytic triangulation} of $B$, by which we mean a finite simplicial complex $T$ and a homeomorphism $T_\RR\to B$ (where $T_\RR$ denotes the geometric realization of $T$) which is real analytic on each simplex (a map $\sigma\to B$ is called real analytic when it is the restriction of a complex analytic map $\sigma_\CC\to B$).
Since $T\to B$ is real analytic, we can form the pullback family $X\times_BT\to T$ and the map $H^*_c(\cZ(X/B)\tighttimes_BT/T)=H^*_c(\cZ(X\tighttimes_BT/T)/T)\to H^*_c(\cZ/\Cpx_3)$.
Since $T\to B$ is a homeomorphism, we have $H^*_c(\cZ(X/B)/B)=H^*_c(\cZ(X/B)\tighttimes_BT/T)$, so we obtain by composition a map $H^*_c(\cZ(X/B)/B)\to H^*_c(\cZ/\Cpx_3)$, depending on the choice of real analytic triangulation of $B$.
It is then straightforward to check that if two real analytic triangulations of $B$ are (real analytically) \emph{concordant}, in the sense that they extent to a real analytic triangulation of $B\times[0,1]$, then they induce the same map $H^*_c(\cZ(X/B)/B)\to H^*_c(\cZ/\Cpx_3)$.
The condition that $T_\RR\to B$ be a homeomorphism may be relaxed to being a closed embedding (this way we can deal with non-compact $B$), provided we consider $H^*_c$ of the restriction of $\cZ(X/B)\to B$ to the interior of the image of $T_\RR\to B$.

When $B$ is smooth and compact Hausdorff, it has a canonical concordance class of real analytic triangulation, represented by any real analytic triangulation for which every simplex is immersed (that is, the differential of each map $\sigma\to B$ is injective at every point of the closed simplex $\sigma$).
More generally, if $B$ is just smooth and Hausdorff, there is a canonical concordance class of real analytic triangulations of compact subsets of $B$ containing in their interior a fixed compact $K\subseteq B$.
The case that $B$ is singular is, of course, much harder, and beyond the scope of our current treatment.
We will often represent elements of $H^*_c(\cZ/\Cpx_3)$ using families $X\to B$ over complex manifolds $B$, implicitly appealing to these triangulations.

\begin{lemma}\label{bivariantdoublecomplex}
Let $X\to B$ be a family of threefolds over a finite simplicial complex $B$.
We have
\begin{equation}
H^*_c(\cZ(X/B)/B)=H^*_c\Bigl(\cZ(X/B),\textstyle\bigoplus\limits_{\sigma\subseteq B}\ZZ_{\cZ(X\tighttimes_B\sigma/\sigma)}[\dim\sigma]\Bigr)
\end{equation}
where $\ZZ_{\cZ(X\tighttimes_B\sigma/\sigma)}$ denotes the constant sheaf $i_*\ZZ$ on the closed subset $\cZ(X\tighttimes_B\sigma/\sigma)\subseteq\cZ(X/B)$ and the direct sum of all of these is equipped with the differential given by the sum of all restrictions along codimension one simplex inclusions (with the usual orientation signs).
Thus $H^*_c(\cZ(X/B)/B)$ is the cohomology of the double complex $\bigoplus_{\sigma\subseteq B}C^{*+\dim\sigma}_c(\cZ(X\tighttimes_B\sigma/\sigma))$ whose differentials are the internal differential of $C^*_c$ and the sum of all restrictions along codimension one simplex inclusions (with the usual orientation signs).
This is a `second quadrant' double complex, illustrated as follows:
\begin{equation}
\begin{tikzcd}[row sep=scriptsize,column sep=tiny]
&\vdots&\vdots&\vdots\\
\cdots\ar[r]&\ar[u]\smash{\textstyle\bigoplus\limits_{\Delta^2\subseteq B}}C^2_c(\cZ(X/\Delta^2))\ar[r]&\ar[u]\smash{\textstyle\bigoplus\limits_{\Delta^1\subseteq B}}C^2_c(\cZ(X/\Delta^1))\ar[r]&\ar[u]\smash{\textstyle\bigoplus\limits_{\Delta^0\subseteq B}{}}C^2_c(\cZ(X/\Delta^0))\\
\cdots\ar[r]&\ar[u]\smash{\textstyle\bigoplus\limits_{\Delta^2\subseteq B}}C^1_c(\cZ(X/\Delta^2))\ar[r]&\ar[u]\smash{\textstyle\bigoplus\limits_{\Delta^1\subseteq B}}C^1_c(\cZ(X/\Delta^1))\ar[r]&\ar[u]\smash{\textstyle\bigoplus\limits_{\Delta^0\subseteq B}{}}C^1_c(\cZ(X/\Delta^0))\\
\cdots\ar[r]&\ar[u]\textstyle\bigoplus\limits_{\Delta^2\subseteq B}C^0_c(\cZ(X/\Delta^2))\ar[r]&\ar[u]\textstyle\bigoplus\limits_{\Delta^1\subseteq B}C^0_c(\cZ(X/\Delta^1))\ar[r]&\ar[u]\textstyle\bigoplus\limits_{\Delta^0\subseteq B}C^0_c(\cZ(X/\Delta^0))
\end{tikzcd}
\end{equation}
Its `total complex' means we take the direct sum over the anti-diagonals.
\end{lemma}

\begin{proof}
The dualizing complex $\omega_B=(B\to *)^!\ZZ$ is given by $\bigoplus_{\sigma\subseteq B}\ZZ_\sigma[\dim\sigma]$, where $\ZZ_\sigma$ denotes the constant sheaf $i_*\ZZ$ on the closed simplex $\sigma\subseteq B$, equipped with the differential given by the sum of all restrictions along codimension one simplex inclusions, with the usual orientation signs.
Now pull back to $\cZ(X/B)$ and take compactly supported cohomology.
\end{proof}

\begin{lemma}\label{bivariantsmoothbasecycles}
Let $X\to B$ be a family of threefolds over an oriented real analytic manifold $B$ of dimension $b$.
The bivariant group $H^*_c(\cZ(X/B)/B)$ is the shifted cohomology $H^{*+b}_c(\cZ(X/B))$.
\end{lemma}

\begin{proof}
The dualizing complex of $B$ is $\ZZ[b]$.
\end{proof}

\begin{remark}\label{doublecomplexdef}
It would be equivalent to define $H^*_c(\cZ/\Cpx_3)$ as the cohomology of the double complex $\bigoplus_{p\geq 0}\bigoplus_{X\to\Delta^p}C^{*+p}_c(\cZ(X/\Delta^p))$ with differentials given by the internal differential and the sum over all codimension one simplex inclusions, with the usual orientation signs (compare the double complex in Lemma \ref{bivariantdoublecomplex}).
To make sense of this direct sum over `all families of threefolds over $\Delta^p$' (which is not a set), we may regard it as a simplicial groupoid and `resolve' it by a simplicial set mapping to it via a trivial Kan fibration.
This was our original definition of $H^*_c(\cZ/\Cpx_3)$ (the difference between it and Definition \ref{grothgpdef} is purely technical).
\end{remark}

\begin{remark}\label{shgroth}
The group $H^*_c(\cZ/\Cpx_3)$ is naturally the homology of a spectrum (object in the stable homotopy category).
One way to see this is to port the bivariant theory $H^*_c$ in Definition \ref{bivariantmaindef} to the stable homotopy category (consider sheaves of spectra rather than sheaves of complexes of $\ZZ$-modules).
Explicitly, the model in Remark \ref{doublecomplexdef} can be lifted to the stable homotopy category as
\begin{equation}
\colim\Bigl(\cdots\arrowstack{r,r,r,r}\coprod_{X\to\Delta^2}D(\cZ(X/\Delta^2)/\infty)\arrowstack{r,r,r}\coprod_{X\to\Delta^1}D(\cZ(X/\Delta^1)/\infty)\arrowstack{r,r}\coprod_XD(\cZ(X)/\infty)\Bigr)
\end{equation}
where by $D$ we mean Spanier--Whitehead dual (more precisely, $A/\infty$ denotes the inverse system $\{A/(A\setminus K)\}_{K\subseteq A\text{ compact}}$, and by $D(A/\infty)$ we mean the colimit of the directed system obtained by applying the contravariant Spanier--Whitehead duality functor $D$).
\end{remark}

\subsection{Algebraic structure}\label{grothalg}

We now endow the Grothendieck group $H^*_c(\cZ/\Cpx_3)$ with the structure of a bi-algebra.
The product corresponds to `disjoint union of cycles', while the coproduct (which exists modulo torsion) corresponds to `addition of cycles'.
There are also `division by $d$' operations on $H^*_c(\cZ/\Cpx_3)$ for integers $d\geq 1$, which are bi-algebra endomorphisms.

\begin{definition}[Product $\mu$ on $H_c^*(\cZ/\Cpx_3)$]\label{productdef}
Given two families $X\to B$ and $X'\to B'$, we may consider the product base space $B\times B'$ and the `disjoint union family' $X\tighttimes B'\sqcup B\tighttimes X'\to B\times B'$ over it.
The relative cycle space of the disjoint family is the product of the relative cycle spaces of the two input families: $\cZ((X\tighttimes B'\sqcup B\tighttimes X')/(B\times B'))=\cZ(X/B)\times\cZ(X'/B')$.
The Künneth morphism for the bivariant theory $H^*_c$ thus gives a map
\begin{multline}
H^*_c(\cZ(X/B)/B)\otimes H^*_c(\cZ(X'/B')/B')\\\to H^*_c(\cZ((X\tighttimes B'\sqcup B\tighttimes X')/(B\tighttimes B'))/(B\tighttimes B')).
\end{multline}
Now the target maps canonically to $H^*_c(\cZ/\Cpx_3)$ by triangulating $B\times B'$ (the resulting map is independent of the choice of triangulation by a concordance argument).
Taking the directed colimit over all families $X\to B$ and $X'\to B'$ thus yields a map
\begin{equation}
\mu:H^*_c(\cZ/\Cpx_3)^{\otimes 2}\to H^*_c(\cZ/\Cpx_3).
\end{equation}
This product is evidently associative and (graded) commutative with unit $\eta=1\in H^0_c(*/*)=H^0_c(\cZ(\varnothing/*)/*)\to H^0_c(\cZ/\Cpx_3)$.
\end{definition}

\begin{definition}[Coproduct $\Delta$ on $H_c^*(\cZ/\Cpx_3)/\tors$]\label{coproductdef}
The addition map $\Sigma:\cZ(X/B)\times_B\cZ(X/B)\to\cZ(X/B)$ over $B$ is proper, hence by the functoriality of the bivariant theory $H^*_c$ determines a pullback map
\begin{equation}
H^*_c(\cZ(X/B)/B)\to H^*_c((\cZ(X/B)\times_B\cZ(X/B))/B).
\end{equation}
Now $\cZ(X/B)\times_B\cZ(X/B)\to B$ is the pullback along the diagonal of the map $\cZ(X/B)\times\cZ(X/B)\to B\times B$, which gives a pushforward map
\begin{equation}
H^*_c((\cZ(X/B)\times_B\cZ(X/B))/B)\to H^*_c((\cZ(X/B)\times\cZ(X/B))/(B\times B)).
\end{equation}
Finally, the Künneth isomorphism for the bivariant theory $H^*_c$ identifies this target with $H^*_c(\cZ(X/B)/B)^{\otimes 2}$, at least modulo torsion.
Composing everything, we obtain a map $H^*_c(\cZ(X/B)/B)/\tors\to(H^*_c(\cZ(X/B)/B)/\tors)^{\otimes 2}$.
Taking the directed colimit over all families $X\to B$, we obtain a map
\begin{equation}
\Delta:H^*_c(\cZ/\Cpx_3)/\tors\to(H^*_c(\cZ/\Cpx_3)/\tors)^{\otimes 2}.
\end{equation}
A diagram chase shows that this coproduct is coassociative and (graded) cocommutative with counit $\varepsilon:H_c^*(\cZ/\Cpx_3)\to\ZZ$ given by `evaluate at the empty cycle' (that is, act on $H^*_c(\cZ(X/B)/B)$ by pulling back under the zero section $B\to\cZ(X/B)$ to get to $H^*_c(B/B)=H^*_c(B,\omega_B)=H_*(B)$, which we map to $\ZZ$ by pairing with $1\in H^0(B)$).
\end{definition}

\begin{lemma}
The operations $(\eta,\mu,\varepsilon,\Delta)$ define the structure of a bi-algebra on $H_c^*(\cZ/\Cpx_3)/\tors$.
This means:
\begin{itemize}
\item$(R,\eta,\mu)$ is an algebra (satisfies unitality and associativity).
\item$(R,\varepsilon,\Delta)$ is a co-algebra (satisfies co-unitality and co-associativity).
\item The maps $\eta$ and $\Delta$ are algebra maps (equivalently, the maps $\varepsilon$ and $\mu$ are co-algebra maps).
\end{itemize}
\end{lemma}

\begin{proof}
Diagram chase.
\end{proof}

\begin{definition}[Multiplicative curve enumeration theory]\label{weaklymultiplicative}
A curve enumeration theory $V\in H_*^\relinfty(\cZ/\Cpx_3)_\naive$ shall be called \emph{(weakly) multiplicative} when it satisfies $V(\mu(x,y))=V(x)V(y)$ for all $x,y\in H_c^*(\cZ/\Cpx_3)$.
\end{definition}

\begin{definition}[Division]\label{rhodivision}
For any $d\geq 1$, the `multiply by $d$' map $\cZ(X/B)\to\cZ(X/B)$ (which is proper) determines a pullback map $H^*_c(\cZ(X/B)/B)\to H^*_c(\cZ(X/B)/B)$.
Taking the colimit over all $X\to B$, we obtain a map $\rho_d:H^*_c(\cZ/\Cpx_3)\to H^*_c(\cZ/\Cpx_3)$.
\end{definition}

The maps $\rho_d$ are bi-algebra morphisms by inspection.

\subsection{Restricting the class of 1-cycles}\label{modifiedgroups}

We will also want to form Grothendieck groups of 1-cycles satisfying certain properties (e.g.\ semi-Fano 1-cycles $\cZ_\semiFano$ or 1-cycles satisfying a multiplicity condition $\cZ_{\leq\bm}$).
Here is a general framework for defining such groups:

\begin{definition}[Modified Grothendieck group of 1-cycles $H^*_c(\cZ_\alpha/\Cpx_3)$]\label{grothgpdefmodifiedeasy}
Let $\cZ_\alpha\subseteq\cZ$ be the specification of a locally closed subset $\cZ_\alpha(X/B)\subseteq\cZ(X/B)$ for every family of threefolds $X\to B$ over a finite simplicial complex $B$, satisfying the following properties:
\begin{itemize}
\item$\cZ_\alpha\subseteq\cZ$ is compatible with pullback, in the sense that $\cZ_\alpha(X/B)\times_BB'=\cZ_\alpha(X\times_BB'/B')$ as subsets of $\cZ(X/B)\times_BB'=\cZ(X\times_BB'/B')$ for all maps of simplicial complexes $B'\to B$.
\item$\cZ_\alpha\subseteq\cZ$ is local, in the sense that for any open set $X^-\subseteq X\to B$, we have $\cZ_\alpha(X^-/B)=\cZ(X^-/B)\cap\cZ_\alpha(X/B)$.
\end{itemize}
We may then define $H^*_c(\cZ_\alpha/\Cpx_3):=\dircolim_{X\to B}H^*_c(\cZ_\alpha(X/B)/B)$.
\end{definition}

Recall that for a locally closed subset $A\subseteq Z$, we define $\ZZ_A=i_*j_!\ZZ$ for $j:A\hookrightarrow\overline A$ and $i:\overline A\to Z$ (or, in fact, for any factorization of $A\to Z$ as the composition of an open embedding followed by a closed embedding).
Now Lemma \ref{bivariantdoublecomplex} generalizes to tell us that
\begin{equation}\label{bivariantdoublecomplexalpha}
H^*_c(\cZ_\alpha(X/B)/B)=H^*_c\Bigl(\cZ(X/B),\textstyle\bigoplus\limits_{\sigma\subseteq B}\ZZ_{\cZ_\alpha(X\tighttimes_B\sigma/\sigma)}[\dim\sigma]\Bigr).
\end{equation}
If $A,B\subseteq Z$ are locally closed subsets and $A\cap B\subseteq A$ is closed and $A\cap B\subseteq B$ is open, then there is an induced map $\ZZ_A\to\ZZ_B$ (call this a \emph{good} pair of locally closed subsets).
We conclude that if $\cZ_\alpha\cap\cZ_\beta\subseteq\cZ_\alpha$ is closed and $\cZ_\alpha\cap\cZ_\beta\subseteq\cZ_\beta$ is open, then we get a map $H^*_c(\cZ_\alpha/\Cpx_3)\to H^*_c(\cZ_\beta/\Cpx_3)$.
If $\cZ_\beta$ is the disjoint union of an open subset $\cZ_\alpha\subseteq\cZ_\beta$ and its closed complement $\cZ_\gamma\subseteq\cZ_\beta$, then there is a long exact sequence
\begin{equation}\label{modifiedgrothles}
\cdots\to H^*_c(\cZ_\alpha/\Cpx_3)\to H^*_c(\cZ_\beta/\Cpx_3)\to H^*_c(\cZ_\gamma/\Cpx_3)\to\cdots
\end{equation}
since directed colimts are exact.

For example, we could consider the open locus of semi-Fano 1-cycles $\cZ_\semiFano\subseteq\cZ$, and Definition \ref{grothgpdefmodifiedeasy} produces a group $H^*_c(\cZ_\semiFano/\Cpx_3)$ with a map $H^*_c(\cZ_\semiFano/\Cpx_3)\to H^*_c(\cZ/\Cpx_3)$ (pushforward of compactly supported cohomology under open embeddings).
We could also consider the loci $\cZ_{\leq\bm}$ from Definition \ref{multfilt} and produce a long exact sequence
\begin{equation}
\cdots\to H^*_c(\cZ_{<\bm}/\Cpx_3)\to H^*_c(\cZ_{\leq\bm}/\Cpx_3)\to H^*_c(\cZ_{=\bm}/\Cpx_3)\to\cdots
\end{equation}
for any $\bm$.

The bi-algebra operations on $H^*_c(\cZ/\Cpx_3)$ extend to the variants $H^*_c(\cZ_\alpha/\Cpx_3)$ under natural hypotheses.
The product construction in Definition \ref{productdef} yields a map $H^*_c(\cZ_\alpha/\Cpx_3)\otimes H^*_c(\cZ_\beta/\Cpx_3)\to H^*_c(\cZ_\gamma/\Cpx_3)$ provided that for a disjoint union of families $X\sqcup X'\to B$, the pair of locally closed subsets $\cZ(X/B)_\alpha\times_B\cZ_\beta(X'/B)$ and $\cZ_\gamma((X{\sqcup}X')/B)$ of $\cZ((X{\sqcup}X')/B)=\cZ(X/B)\times_B\cZ(X'/B)$ form a good pair (in practice, the former will just be an open subset of the latter).
For example, $H^*_c(\cZ_\semiFano/\Cpx_3)$ has a product, and there are product operations $H^*_c(\cZ_{\leq\bm}/\Cpx_3)\otimes H^*_c(\cZ_{\leq\bm'}/\Cpx_3)\to H^*_c(\cZ_{\leq(\bm,\bm')}/\Cpx_3)$.

The coproduct construction in Definition \ref{coproductdef} yields a map $H^*_c(\cZ_\alpha/\Cpx_3)\to(H^*_c(\cZ_\alpha/\Cpx_3)/\tors)^{\otimes 2}$ provided that the sum operation sends $\cZ_\alpha\times\cZ_\alpha\to\cZ_\alpha$ and as such is proper (for example, it suffices to know that $z+z'$ is an $\alpha$-cycle iff $z$ and $z'$ are both $\alpha$-cycles).
For example, this applies to $\cZ_\semiFano$.

The operations $\rho_d$ from Definition \ref{rhodivision} are defined on $H^*_c(\cZ_\alpha/\Cpx_3)$ provided `mutiply by $d$' is proper $\cZ_\alpha\to\cZ_\alpha$ (which holds, for example, when $d\cdot z$ is an $\alpha$-cycle iff $z$ is an $\alpha$-cycle); this also holds for $\cZ_\semiFano$.

\subsection{Virtual fundamental classes}\label{vfcsubsec}

Now let us recall the standard construction of curve enumeration theories, that is classes in $H_*^\relinfty(\cZ/\Cpx_3)_\naive$, based on the theory of virtual fundamental classes and perfect obstruction theories that we review in Appendix \ref{vfc}.

Curve enumeration theories usually arise via proper pushforward along a map $\cE\to\cZ$, where $\cE$ is another functor associating to each family of threefolds $X\to B$ over a complex analytic base $B$ a complex analytic space (or Deligne--Mumford complex analytic stack) $\cE(X/B)\to B$, compatible with pullback.
Given such an $\cE$, we may define $H_*^\relinfty(\cE/\Cpx_3)_\naive$ exactly as in Definition \ref{grothgpdef} just with $\cE$ in place of $\cZ$.
Given a natural transformation $\cE\to\cZ$ which is proper (meaning $\cE(X/B)\to\cZ(X/B)$ is proper for every family $X\to B$), we get a proper pushforward map $H_*^\relinfty(\cE/\Cpx_3)_\naive\to H_*^\relinfty(\cZ/\Cpx_3)_\naive$.
So, it suffices to define classes in $H_*^\relinfty(\cE/\Cpx_3)_\naive$ for such $\cE\to\cZ$.

If $\cE$ is smooth, in the sense that $\cE(X/B)\to B$ is smooth (i.e.\ submersive) for every $X\to B$, then there is a canonical relative (i.e.\ vertical) fundamental class in $H_*^\relinfty(\cE(X/B)/B)$, lying in degree given by the relative dimension of $\cE(X/B)\to B$, compatible with pullback.
More generally, we may ask that $\cE$ have a (relative) \emph{perfect obstruction theory} in the sense of Behrend--Fantechi \cite{intrinsicnormalcone} (see Definition \ref{potnondef}), meaning that $\cE(X/B)\to B$ has a perfect obstruction theory for every $X\to B$, compatible with pullback.
A perfect obstruction theory on $\cE$ induces a (relative) \emph{virtual fundamental class} in $H_*^\relinfty(\cE(X/B)/B)$ (see Definition \ref{vfcdef}) for every $X\to B$, compatible with pullback (by Lemma \ref{vfcpullbackpot}).

Now consider the case of a family of threefolds $X\to B$ over a finite simplicial complex $B$ (instead of a complex analytic space).
We have the complexified family $X_\CC\to B_\CC$ for some open set $B_\CC\subseteq B_\CC^\aff$ containing the geometric realization $B_\RR$ (as discussed in Section \ref{complexoversimplex}), hence its virtual fundamental class in $H_*^\relinfty(\cE(X_\CC/B_\CC)/B_\CC)$ defined above.
We may then define the virtual fundamental class in $H_*^\relinfty(\cE(X/B)/B)$ (though we write $B$ we really mean $B_\RR$) to be its pullback.
This class is compatible with pullback under maps of finite simplicial complexes $B'\to B$, so we obtain a `virtual fundamental' class $[\cE]^\vir\in H_*^\relinfty(\cE/\Cpx_3)_\naive$.

The virtual fundamental class $[\cE]^\vir\in H_*^\relinfty(\cE/\Cpx_3)_\naive$ lies in homological degree $i$ equal to (twice) the (complex) index of the perfect obstruction theory on $\cE$.
For families of \emph{three}folds, this typically equals $2k$ (twice the chern number $k$), and hence $[\cE]^\vir$ has `virtual codimension' $2k-i=0$.

\begin{remark}\label{vfccoherent}
We saw in Lemma \ref{familysimplicialtotalspace} that a family of threefolds $X\to B$ over a finite simplicial complex $B$ (in the sense of having a family of threefolds over the complexification $B_\CC$) is the same thing as a system of families over the complexifications $\sigma_\CC$ of each simplex $\sigma\subseteq B$, with restriction isomorphisms for simplex inclusions.
It is worthwhile to point out that the above discussion really relies on having families in the former sense, not the latter.
To extract a relative virtual fundamental class in $H_*^\relinfty(\cE(X/B)/B)$ out of a family in the latter sense (without appealing to Lemma \ref{familysimplicialtotalspace}) would require a discussion of homotopically coherent perfect obstruction theories (or, more likely, quasi-smooth derived structures) and their homotopically coherent virtual fundamental cycles (such a theory surely exists, but is much more technical than the classical theory outlined in Appendix \ref{vfc}, which is all we are relying on here).
Philosophically speaking, this is an instance of the general principle that a relative construction for arbitrary families is often a close enough substitute for a homotopy coherent construction.
\end{remark}

Now suppose $\cE$ is \emph{multiplicative} in the sense that for any pair of families of threefolds $X\to B\leftarrow X'$ over a complex analytic base $B$, there is a functorial isomorphism $\cE((X\sqcup X')/B)=\cE(X/B)\times_B\cE(X'/B)$, compatible with perfect obstruction theories.
It then follows from compatibility of virtual fundamental classes with (fiber) product (Lemmas \ref{vfcpullbackpot} and \ref{vfcproductpot}) that the Künneth morphism
\begin{multline}
H_*^\relinfty(\cE(X/B)/B)\otimes H_*^\relinfty(\cE(X'/B')/B')\\\to H_*^\relinfty(\cE((X\tighttimes B'\sqcup B\tighttimes X')/(B\times B'))/(B\times B'))
\end{multline}
sends the product of the virtual fundamental classes of $\cE(X/B)\to B$ and $\cE(X'/B')\to B'$ to that of $\cE((X\tighttimes B'\sqcup B\tighttimes X')/(B\times B'))\to(B\times B')$.
It follows that $[\cE]^\vir$ is multiplicative in the sense of Definition \ref{weaklymultiplicative}.

\subsection{Enumerative invariants}\label{zgwptsummary}

We now define the Gromov--Witten and Pandharipande--Thomas virtual fundamental classes in $H_*^\relinfty(\cZ/\Cpx_3;\QQ)((u))$ and $H_*^\relinfty(\cZ/\Cpx_3)((q))$.
We denote by
\begin{align}
\label{gwptmapsI}\GW:H^*_c(\cZ/\Cpx_3)&\to\QQ((u))\\
\label{gwptmapsII}\PT:H^*_c(\cZ/\Cpx_3)&\to\ZZ((q))
\end{align}
the resulting homomorphisms, which are in fact ring homomorphisms since these virtual fundamental classes are multiplicative.

Gromov--Witten and Pandharipande--Thomas invariants are defined using moduli spaces $\cMbarprime(X/B)$ and $P(X/B)$ (respectively) over $B$ associated to any family of threefolds $X\to B$ over a complex analytic space $B$.
The moduli space $\cMbarprime(X/B)$ is a Deligne--Mumford analytic stack representing stable maps from compact (not necessarily connected) nodal curves to fibers of $X\to B$, all of whose connected components are non-constant.
The analytic space $P(X/B)$ parameterizes stable pairs on fibers of $X\to B$ (a stable pair is a coherent sheaf $F$ of proper support with the property that the support of every subsheaf is pure dimension one, along with a section $s$ whose cokernel has relative dimension zero) \cite{pandharipandethomasstablepairs,lepotier,douady}.
There are locally constant maps
\begin{alignat}{2}
\chi&:{}&\cMbarprime(X/B)&\to\ZZ\\
n&:{}&P(X/B)&\to\ZZ
\end{alignat}
given by domain arithmetic Euler characteristic and holomorphic Euler characteristic, respectively.

Both $\cMbarprime(X/B)\to B$ and $P(X/B)\to B$ carry a natural (relative) perfect obstruction theory, compatible with pullback.
As reviewed in Section \ref{vfcsubsec}, there are hence induced virtual fundamental classes
\begin{alignat}{2}
[\cMbarprime/\Cpx_3]^\vir&=\prod_{X\to\Delta^k}[\cMbarprime(X/\Delta^k)]^\vir&&\in H_*^\relinfty(\cMbarprime/\Cpx_3;\QQ),\\
[P/\Cpx_3]^\vir&=\prod_{X\to\Delta^k}[P(X/\Delta^k)]^\vir&&\in H_*^\relinfty(P/\Cpx_3).
\end{alignat}
Now the maps $\cMbarprime\to\cZ$ and $P\to\cZ$ are proper when restricted to the sets on which $-\chi$ and $n$ are bounded above by a given $N<\infty$, respectively.
Pushing forward $u^{-\chi}\cdot[\cMbarprime/\Cpx_3]^\vir$ and $q^n\cdot[P/\Cpx_3]^\vir$ thus defines classes
\begin{align}
\GW&\in H_*^\relinfty(\cZ/\Cpx_3;\QQ)((u)),\\
\PT&\in H_*^\relinfty(\cZ/\Cpx_3)((q)),
\end{align}
which have virtual codimension zero since the virtual fundamental classes of $\cMbarprime$ and $P$ lie in homological degree twice the chern number.
This defines the group homomorphisms \eqref{gwptmapsI}--\eqref{gwptmapsII}.

The moduli spaces $\cMbarprime$ and $P$ are `multiplicative' in the sense that $\cMbarprime((X\sqcup Y)/B)=\cMbarprime(X/B)\times_B\cMbarprime(Y/B)$ compatibly with perfect obstruction theories (and the same for $P$).
As reviewed in Section \ref{vfcsubsec}, it follows that the induced virtual fundamental classes $[\cMbarprime]^\vir$ and $[P]^\vir$ are also multiplicative in the sense that \eqref{gwptmapsI}--\eqref{gwptmapsII} are ring homomorphisms.

Classical Gromov--Witten and Pandharipande--Thomas theory is interested in evaluating $\GW$ and $\PT$ on elements of $H^*_c(\cZ/\Cpx_3)$ coming from projective threefolds.
When $X$ is projective, the space of cycles $\cZ(X,\beta)$ in homology class $\beta$ is compact, hence its characteristic function defines a class $(X,\beta)\in H^0_c(\cZ(-,\langle c_1(TX),\beta\rangle)/\Cpx_3)$, which has virtual dimension $2\langle c_1(TX),\beta\rangle$.
Thus when $\langle c_1(TX),\beta\rangle=0$, we may evaluate the homomorphisms $\GW$ and $\PT$ on this element to obtain invariants
\begin{align}
\GW(X,\beta)&=\int_{[\cMbarprime(X,\beta)]^\vir}u^{-\chi}\in\QQ((u)),\\
\PT(X,\beta)&=\int_{[P(X,\beta)]^\vir}q^n\in\ZZ((q)).
\end{align}
More generally, given cohomology classes $\gamma_1,\ldots,\gamma_r\in H^*(X)$ (called `insertions'), we may consider the class
\begin{equation}
(X,\beta;\gamma_1,\ldots,\gamma_r)\in H^{(\abs{\gamma_1}-2)+\cdots+(\abs{\gamma_r}-2)}_c(\cZ(-,\langle c_1(TX),\beta\rangle)/\Cpx_3)
\end{equation}
given by the cohomology class $1_\beta\prod_{i=1}^r\pi_!i^*\gamma_i$ on $\cZ(X)$, namely the result of push/pull via the universal family.
\begin{equation}
\begin{tikzcd}
\cU(X)\ar[r,"i"]\ar[d,"\pi"]&X\\
\cZ(X)
\end{tikzcd}
\end{equation}
Evaluating $\GW$ and $\PT$ on this class produces Gromov--Witten invariants and Pandharipande--Thomas invariants of $X$ in homology class $\beta$ with insertions $\gamma_1,\ldots,\gamma_r$
\begin{align}
\GW(X,\beta;\gamma_1,\ldots,\gamma_r)&=\int_{[\cMbarprime(X,\beta)]^\vir}{\textstyle\prod\limits_{i=1}^r}\pi_!\ev^*\gamma_i\cdot u^{-\chi}\in\QQ((u))\\
\PT(X,\beta;\gamma_1,\ldots,\gamma_r)&=\int_{[P(X,\beta)]^\vir}{\textstyle\prod\limits_{i=1}^r}\pi_!(\ch_2(\FF)\cup\ev^*\gamma_i)\cdot q^n\in\ZZ((q))
\end{align}
where the integrand involves push/pull for the universal families
\begin{align}
\begin{tikzcd}[ampersand replacement=\&]
\cUbar'(X)\ar[r,"\ev"]\ar[d,swap,"\pi"]\&X\\
\cMbarprime(X)
\end{tikzcd}
&&&
\begin{tikzcd}[ampersand replacement=\&]
P(X)\times X\ar[r,"\ev=p_X"]\ar[d,swap,"\pi"]\&X\\
P(X)
\end{tikzcd}
\end{align}
and $\FF$ denotes the universal stable pair on $P(X)\times X$ (note that the second chern character $\ch_2(\FF)$ is simply the fundamental cycle of the support of $\FF$, a codimension four cohomology class on $P(X,\beta)\times X$).
These invariants vanish for dimension reasons except when the virtual dimension $2\langle c_1(TX),\beta\rangle-\sum_i(\abs{\gamma_i}-2)$ of the element in $H^*(\cZ/\Cpx_3)$ is zero.

\section{Local curves}\label{localcurvesec}

In the study of enumerative invariants of complex threefolds, the term \emph{local curve} refers to (the total space of) a rank two vector bundle $E$ over a smooth proper (usually connected) curve $C$.
Given a local curve $E\to C$, one is then interested in enumerating curves \emph{supported on the zero section} $C\subseteq E$; unfortunately, this has no meaning \emph{a priori} since $\ZZ_{\geq 0}\cdot[C]\subseteq\cZ(E)$ is usually not open.
The goal of this section is to recall how to make sense of the enumerative theory of local curves by working equivariantly, and to recall that equivariant invariants are a special case of family invariants (via the Borel construction) hence are realized naturally within the framework of the Grothendieck group $H^*_c(\cZ/\Cpx_3)$.

\begin{remark}\label{localdeform}
It is not hard to show that local curves are classified up to deformation by the pair of integers $g=g(C)\geq 0$ and $c=c_1(E)\in\ZZ$.
The chern number of the zero section is given by $k=c_1(TE)=c_1(E)+c_1(TC)=2-2g+c$ and is a more convenient index than $c$.
We write $E_{g,k}$ for the (unique up to deformation) local curve of genus $g\geq 0$ and chern number $k$.
\end{remark}

\subsection{Equivariant homology}

The flavor of equivariant homology relevant for our present discussion is called \emph{co-Borel equivariant homology}, which measures `homotopically $S^1$-invariant cycles' on an $S^1$-space.
We will describe this homology theory in terms of the bivariant theories from Section \ref{bivariantthry}.

Given an $S^1$-space $X$, denote by $X_{\CC P^N}\to\CC P^N$ (a locally trivial fibration with fiber $X$) the quotient $(X\times S^{2N+1})/S^1\to S^{2N+1}/S^1$, where $S^{2N+1}\subseteq\CC^{N+1}$ is the unit sphere acted on by the unit circle $S^1\subseteq\CC$ by multiplication.
Note that if the $S^1$-action on $X$ is the restriction to the unit circle $S^1\subseteq\CC^\times$ of a $\CC^\times$-action on $X$, then we also have $X_{\CC P^N}=(X\times(\CC^{N+1}-0))/\CC^\times$ (since $\CC^\times=S^1\times\RR_{>0}$ and $X\times S^{2N+1}\subseteq X\times(\CC^{N+1}-0)$ is a slice for the $\RR_{>0}$-action).

The families $X_{\CC P^N}\to\CC P^N$ are related by pullback diagrams
\begin{equation}\label{borelcompatible}
\begin{tikzcd}
X_{\CC P^N}\ar[r]\ar[d]&X_{\CC P^{N+1}}\ar[d]\\
\CC P^N\ar[r]&\CC P^{N+1}
\end{tikzcd}
\end{equation}
where $\CC P^N\subseteq\CC P^{N+1}$ is the standard hyperplane inclusion.
The directed system $\{X_{\CC P^N}\to\CC P^N\}_N$ will be called the \emph{Borel construction} of the $S^1$-space $X$.
Usually the term `Borel construction' refers to the single fibration $X_{\CC P^\infty}\to\CC P^\infty$, but for our present work it is more convenient to work with the sequence of finite-dimensional approximations $X_{\CC P^N}\to\CC P^N$.

\begin{definition}[co-Borel equivariant homology and cohomology]\label{coboreldef}
Let $X$ be an $S^1$-space with reasonable topology (say Hausdorff, paracompact, and locally homeomorphic to a finite CW-complex of uniformly bounded dimension).
The co-Borel $S^1$-equivariant homology rel infinity of $X$ is given by the inverse limit
\begin{equation}\label{coborelinvlim}
H^{cS^1,\relinfty}_*(X)=\invlim_NH_*^\relinfty(X_{\CC P^N}/\CC P^N).
\end{equation}
Dually, the co-Borel $S^1$-equivariant compactly supported cohomology of $X$ is the directed colimit
\begin{equation}\label{coboreldirlim}
H_{cS^1,c}^*(X)=\dircolim_NH^*_c(X_{\CC P^N}/\CC P^N).
\end{equation}
Since $\CC P^N$ are compact oriented manifolds, the bivariant groups appearing in \eqref{coborelinvlim}--\eqref{coboreldirlim} may be expressed in terms of ordinary homology and cohomology.
\begin{align}
H_*^\relinfty(X_{\CC P^N}/\CC P^N)&=H_{*+2N}^\relinfty(X_{\CC P^N})\\
H^*_c(X_{\CC P^N}/\CC P^N)&=H^{*+2N}_c(X_{\CC P^N})
\end{align}
In terms of these descriptions, the transition maps in the inverse limit \eqref{coborelinvlim} and directed colimit \eqref{coboreldirlim} are the `wrong way maps'
\begin{align}
H_{*+2N}^\relinfty(X_{\CC P^N})&\leftarrow H_{*+2N+2}^\relinfty(X_{\CC P^{N+1}})\\
H^{*+2N}_c(X_{\CC P^N})&\to H^{*+2N+2}_c(X_{\CC P^{N+1}})
\end{align}
(`intersection with the hyperplane $\CC P^N\subseteq\CC P^{N+1}$') associated to the diagram \eqref{borelcompatible}.
There is a long exact sequence
\begin{equation}
\cdots\to H_{*+2N+2}^\relinfty(X_{\CC P^{N+1}})\to H_{*+2N}^\relinfty(X_{\CC P^N})\to H_{*+2N+1}^\relinfty(X)\to\cdots
\end{equation}
(and dually for $H^*_c$) which means the transition maps are isomorphisms once $*+2N+1>\dim X$.
This eventual constancy implies the inverse limit \eqref{coborelinvlim} is well behaved (so, for example, the long exact sequence of the pair exists and is exact for $H^{cS^1,\relinfty}$).
\end{definition}

\begin{remark}
Given a reasonable definition of the bivariant theory for maps such as $X_{\CC P^\infty}\to\CC P^\infty$, one should be able to eliminate the inverse limit and directed colimit above and simply write $H^{cS^1,\relinfty}_*(X)=H_*^\relinfty(X_{\CC P^\infty}/\CC P^\infty)$ and $H^*_{cS^1,c}(X)=H^*_c(X_{\CC P^\infty}/\CC P^\infty)$.
We choose to use the definition in terms of (co)limits because it is technically simpler (the main technical complication of infinite-dimensional bases such as $\CC P^\infty$ is that homology and upper shriek are no longer related in the same way, since this relation involves a shift by the dimension).
\end{remark}

It is evident that $H^{cS^1,\relinfty}_*(X)$ and $\smash{H_{cS^1,c}^*(X)}$ can be non-zero only for $*\leq\dim X$, but there is typically no lower bound on $*$ for which they are non-zero.
For example, $H^{cS^1}_*(\pt)=\invlim_NH_{*+2N}(\CC P^N)=\invlim_NH^{-*}(\CC P^N)=H^{-*}(\CC P^\infty)=\ZZ[t]$ (free polynomial algebra) where $t$ is the class of a hyperplane and lies in homological degree $-2$ (cohomological degree $2$).
Intersection of cycles gives $H^{cS^1\!}_*(\pt)$ the structure of a ring and gives each $H^{cS^1,\relinfty}_*(X)$ the structure of a module over it.

\begin{definition}[Tate $S^1$-equivariant homology]\label{tatedefn}
The Tate $S^1$-equivariant homology is the localization of co-Borel equivariant homology at $t\in H^{cS^1}_{-2}(\pt)=H^2(\CC P^\infty)$, namely it is the directed colimit
\begin{equation}
H^{tS^1,\relinfty}_*(X)=\dircolim_iH^{cS^1,\relinfty}_{*-2i}(X)
\end{equation}
where the transition maps are multiplication by $t$ (compare Greenlees--May \cite[Corollary 16.3]{greenleesmay}).
\end{definition}

The key property of Tate $S^1$-equivariant homology is that it vanishes for (almost) free $S^1$-spaces (with rational coefficients), hence by the long exact sequence and excision, depends rationally only on the fixed set.
This is known as the \emph{equivariant localization theorem}, which originates in the work of Smith \cite{smith,smithII,smithIII}, was reformulated cohomologically by Borel \cite{borel}, and was then formalized in its present form by Atiyah--Segal \cite{atiyahsegal,segal} and Quillen \cite{quillen}.

\begin{proposition}\label{equivariantlocalization}
The map $H^{tS^1,\relinfty}_*(X^{S^1})\to H^{tS^1,\relinfty}_*(X)$ is an isomorphism over $\QQ$.
\end{proposition}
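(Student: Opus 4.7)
The plan is to reduce, via the long exact sequence of the pair $X^{S^1} \subseteq X$, to the vanishing $H^{tS^1}_*(Y;\QQ) = 0$ for any $S^1$-space $Y$ without fixed points, and then to verify that vanishing by showing $H^{cS^1}_*(Y;\QQ)$ is concentrated in a bounded range of degrees and hence killed by $t$-localization.

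First I would establish a long exact sequence
\begin{equation*}
\cdots \to H^{tS^1}_*(U;\QQ) \to H^{tS^1}_*(X;\QQ) \to H^{tS^1}_*(X^{S^1};\QQ) \to H^{tS^1}_{*-1}(U;\QQ) \to \cdots
\end{equation*}
where $U = X \setminus X^{S^1}$ and the $U$-term is defined by the analogous inverse limit of relative (Borel--Moore) homology of the pair of Borel approximations. At each finite level $n$ this is simply the long exact sequence of the pair $((X^{S^1}\times S^{2n+1})/S^1,\, (X\times S^{2n+1})/S^1)$ in ordinary homology. The uniform finite-dimensionality hypothesis (inherited by $X^{S^1}$ and $U$) ensures, as in the paragraph following \eqref{coborelinvlim}, that each inverse system is eventually constant in every fixed degree, so Mittag--Leffler is trivial and the inverse limit is exact. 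Localizing at $t$ preserves exactness, so it suffices to show $H^{tS^1}_*(U;\QQ) = 0$.

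Next, for any $S^1$-space $Y$ whose every stabilizer is a proper closed subgroup of $S^1$ (hence a finite cyclic $\mu_k$), I would show that $H^{cS^1}_*(Y;\QQ)$ is supported in a finite range of degrees, which forces the $t$-localization to vanish (since $t$ has degree $-2$, every class is annihilated by some power of $t$). For this I would use the Serre spectral sequence of the sphere bundle $(Y \times S^{2n+1})/S^1 \to Y/S^1$: rationally the fibers of $Y \times_{S^1} ES^1 \to Y/S^1$ are classifying spaces $B\mu_k$, which are rationally acyclic, so in each fixed degree $*$, for $n$ large enough the spectral sequence collapses to identify $H_{*+2n}((Y\times S^{2n+1})/S^1;\QQ) \cong H_{*-1}(Y/S^1;\QQ)$. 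This is concentrated in the range $1 \leq * \leq \dim Y$; in particular it vanishes for $* \leq 0$, so $t^N \cdot H^{cS^1}_*(Y;\QQ) = 0$ for $N > \dim Y / 2$, and the localization $H^{tS^1}_*(Y;\QQ) = \varinjlim_i H^{cS^1}_{*-2i}(Y;\QQ)$ is zero. Equivalently, one could perform this computation one equivariant cell at a time via Mayer--Vietoris, reducing to $Y = S^1/\mu_k$ where the Borel approximations are lens spaces $S^{2n+1}/\mu_k$ (rationally equivalent to $S^{2n+1}$) and $H^{cS^1}_*(Y;\QQ)$ is concentrated in degree $1$.

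The main obstacle is the technical handling of the long exact sequence for the open complement $U$ inside the inverse-limit-based co-Borel framework---choosing a model of $H^{cS^1}_*$ on noncompact/open spaces so that the excision isomorphism $H^{cS^1}_*(X, X^{S^1}) \cong H^{cS^1}_*(U)$ is valid, and verifying Mittag--Leffler for the relative inverse systems (as well as checking that $X^{S^1}$ and the orbit space of $U$ inherit the finite-dimensionality hypothesis so that the collapsing argument applies). Once this scaffolding is in place, the geometric content---$t$-nilpotence of co-Borel homology on almost free $S^1$-spaces as a consequence of finite-dimensionality of rational Borel cohomology---is standard, mirroring the classical Borel--Atiyah--Segal--Quillen localization theorem.
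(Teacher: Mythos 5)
Your proposal is correct in substance and isolates the same geometric content as the paper --- the $t$-nilpotence of co-Borel homology on almost-free $S^1$-spaces over $\QQ$ --- but you reach it by a genuinely different route. The paper's proof works cell-by-cell: it assumes an $S^1$-equivariant CW structure (cells $S^1/\Gamma\times(D^k,\partial D^k)$), reduces by the long exact sequence of a CW-pair and suspension to showing $H^{tS^1\!}_*(S^1/\Gamma)=0$ for a proper subgroup $\Gamma\subsetneqq S^1$, and then uses a transfer argument to reduce the finite-isotropy case $S^1/\Gamma$ to the free orbit $S^1$, where one computes $H^{cS^1\!}_*(S^1)=\ZZ$ directly. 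Your main argument is global rather than cell-by-cell: you push forward along $Y\times_{S^1}ES^1\to Y/S^1$ using that the fibers $B\mu_k$ are rationally acyclic, identify $H^{cS^1\!}_*(Y;\QQ)\cong H_{*-1}(Y/S^1;\QQ)$, and conclude by a degree-boundedness argument. Your parenthetical ``alternative'' (Mayer--Vietoris over equivariant cells, reducing to lens spaces $S^{2n+1}/\mu_k$) is essentially the paper's proof, since the rational-sphere computation for lens spaces is the multiplicative shadow of the transfer argument. What the transfer buys the paper is that it avoids any discussion of (derived) pushforwards along the map $Y\times_{S^1}ES^1\to Y/S^1$, which is not a fiber bundle (the stabilizer, hence the fiber $B\Gamma_y$, jumps); to run your Serre/Leray spectral sequence argument rigorously one must justify that the pushforward rationally behaves as the constant sheaf, which at the level of generality needed here again comes back to some stratification or cell structure. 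Similarly, your first long exact sequence is written in a Borel--Moore/open-complement form and invokes an excision isomorphism $H^{cS^1}_*(X,X^{S^1})\cong H^{cS^1}_*(U)$ that is not automatic in singular homology; the paper sidesteps this by staying with a CW-pair $(X,X^{S^1})$, where the long exact sequence is elementary. So the proposal is sound, but the paper's cell-by-cell/transfer route is the one that most painlessly discharges the point-set and sheaf-theoretic obligations you flag as the ``main obstacle.''
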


\begin{proof}
We assume that our spaces have a reasonable $S^1$-equivariant cell decomposition (which holds in the cases we care about by real analyticity).
Precisely speaking, this means that $X$ is glued out of cells of the form $(S^1/\Gamma)\times(D^k,\partial D^k)$ for subgroups $\Gamma\subseteq S^1$, where $S^1$ acts by multiplication on the first factor (and trivially on the second factor).
Given such a cell decomposition of $X$, to show that $H^{tS^1,\relinfty}_*(X,X^{S^1})=0$, it suffices (by the long exact sequence and excision) to show that $H^{tS^1}_*((S^1/\Gamma)\times(D^k,\partial D^k))=0$ for $\Gamma\subsetneqq S^1$ a \emph{proper} subgroup.
We have $H^{tS^1\!}_*((S^1/\Gamma)\times(D^k,\partial D^k))=H^{tS^1\!}_{*-k}(S^1/\Gamma)$, so we are reduced to showing that $H^{tS^1\!}_*(S^1/\Gamma)=0$ for $\Gamma\subsetneqq S^1$.
Since $\Gamma$ is finite, there is a `transfer' map $H^{tS^1\!}_*(S^1/\Gamma)\to H^{tS^1\!}_*(S^1)$ whose composition with the pushforward map $H^{tS^1\!}_*(S^1)\to H^{tS^1\!}_*(S^1/\Gamma)$ is multiplication by $\#\Gamma$ on $H^{tS^1\!}_*(S^1/\Gamma)$.
It thus suffices to show that $H^{tS^1\!}_*(S^1)=0$, which follows from calulating $H^{cS^1\!}_*(S^1)=\ZZ$.
\end{proof}

The significance of equivariant localization is the following.
Given a class in $H_*^{cS^1,\relinfty}(X)$, we may push forward to $H_*^{cS^1\!}(\pt)$ provided $X$ is compact.
However, if we are satisfied with pushing forward to the Tate group $H_*^{tS^1\!}(\pt)$ (over the rationals), then equivariant localization provides such a pushforward map when just the fixed set $X^{S^1}$ is compact.
\begin{equation}\label{tatepushforwardbetter}
\begin{tikzcd}
H^{cS^1,\relinfty}_*(X^{S^1})\ar[r]\ar[d]\ar[dd,rounded corners,"X^{S^1}\text{ compact}"',to path={(\tikztostart)--([xshift=-10ex]\tikztostart.center)--([xshift=-10ex]\tikztotarget.center)\tikztonodes--(\tikztotarget)}]&H^{tS^1,\relinfty}_*(X^{S^1};\QQ)\ar[d,"\sim"]\ar[dd,rounded corners,"X^{S^1}\text{ compact}",to path={(\tikztostart)--([xshift=10ex]\tikztostart.center)--([xshift=10ex]\tikztotarget.center)\tikztonodes--(\tikztotarget)}]\\
H^{cS^1,\relinfty}_*(X)\ar[r]\ar[d,"X\text{ compact}"']&H^{tS^1,\relinfty}_*(X;\QQ)\ar[d,"X\text{ compact}"]\\
H^{cS^1}_*(\pt)\ar[r]&H^{tS^1}_*(\pt;\QQ)\\
\end{tikzcd}
\end{equation}

\subsection{Equivariant enumerative invariants}

We have already seen how a curve enumeration theory, namely a class in $H^\relinfty_*(\cZ/\Cpx_3)$, specializes to a `virtual fundamental class' in $H_*^\relinfty(\cZ(X))$ for any complex threefold $X$.
More generally, a curve enumeration theory determines a `relative' virtual fundamental class in $H_*^\relinfty(\cZ(X/B))$ for any families of complex threefolds $X\to B$ over a simplicial complex $B$.
For more general bases $B$, we may define the relative virtual fundamental class by choosing a concordance class of triangulations of $B$.

Now here we consider a complex threefold $X$ with an $S^1$-action, and we seek an \emph{equivariant virtual fundamental class} in $H_*^{cS^1,\relinfty}(\cZ(X))$.
By the definition of co-Borel equivariant homology, such a class is simply a collection of compatible relative virtual fundamental classes on the Borel construction of $\cZ(X)$ (equivariant enumerative invariants are, by definition, family invariants of the Borel construction).
\begin{align}
H_*^{cS^1,\relinfty}(\cZ(X))&=\invlim_NH_*^\relinfty(\cZ(X)_{\CC P^N}/\CC P^N)\\
&=\invlim_NH_*^\relinfty(\cZ(X_{\CC P^N}/\CC P^N)/\CC P^N)
\end{align}
Such a class is therefore provided by any curve enumeration theory (all real analytic triangulations of $\CC P^N$ are concordant).

Now a key feature of equivariant enumerative invariants is that they \emph{localize} to the fixed locus, after inverting the equivariant parameter (that is, in Tate equivariant homology, see Definition \ref{tatedefn}) and passing to rational coefficients.
That is, we may consider the maps $H_*^{cS^1,\relinfty}(\cZ(X))\to H_*^{tS^1,\relinfty}(\cZ(X);\QQ)\xleftarrow\sim H_*^{tS^1,\relinfty}(\cZ(X)^{S^1};\QQ)$, where the second map is an isomorphism by Proposition \ref{equivariantlocalization}, to obtain a \emph{localized} equivariant virtual fundamental class in $H_*^{tS^1,\relinfty}(\cZ(X)^{S^1};\QQ)$.

Now let us specialize to our case of interest, namely a local curve $E=E_{g,k}\to C_g$ equipped with its fiberwise scaling action by $\CC^\times$ (rather, the restriction of this action to the unit circle $S^1\subseteq\CC^\times$).
Any curve enumeration theory defines a localized equivariant virtual fundamental class in $H_*^{tS^1,\relinfty}(\cZ(E)^{S^1};\QQ)$.
Now $\cZ(E)^{S^1}$ is particularly simple: the only $S^1$-fixed cycles on a local curve are the multiples of the zero section $m[C]$ for $m\geq 0$.
Thus a localized equivariant virtual fundamental class for a local curve $E_{g,k}$ is simply a collection of classes in $H_*^{tS^1,\relinfty}(\pt)=\QQ[t,t^{-1}]$ indexed by integers $m\geq 0$.
Since the local curve depends only on $g$ and $k$ up to deformation, we obtain elements of $\QQ[t,t^{-1}]$ indexed by integers $g\geq 0$, $m\geq 0$, and $k\in\ZZ$ (associated to any choice of curve enumeration theory); these are, roughly speaking, `$S^1$-equivariant counts of curves of degree $m$ on $E_{g,k}$'.
These `curve counts' lie in $\QQ\cdot t^{-mk}$ for degree reasons (the virtual fundamental class lies in homological degree given by twice the chern number, which in this case equals $2mk$, and $t$ has homological degree $-2$).
In particular, the Gromov--Witten and Pandharipande--Thomas curve enumeration theories (see Section \ref{zgwptsummary}) give rise to invariants.
\begin{align}
\GW_{S^1}(E_{g,k},m)&\in\QQ((u))\cdot t^{-mk}\\
\PT_{S^1}(E_{g,k},m)&\in\QQ((q))\cdot t^{-mk}
\end{align}
for integers $g\geq 0$, $m\geq 0$, and $k\in\ZZ$.

\begin{theorem}[\cite{bryanpandharipande,okounkovpandharipande}]\label{boplocal}
The power series $\GW_{S^1}(E_{g,k},m)$ and $\PT_{S^1}(E_{g,k},m)$ satisfy the MNOP correspondence with weight $k$.
\end{theorem}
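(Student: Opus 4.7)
The plan is to reduce the statement to the explicit computations in the cited works, after first checking that the equivariant invariants $\GW_{S^1}(E_{g,k},m)$ and $\PT_{S^1}(E_{g,k},m)$ defined here via the co-Borel pushforward from $H_*^{cS^1,\relinfty}(\cZ(E))$ agree with the equivariant invariants computed classically on the moduli spaces $\cMbar'(E,m[C])$ and $P(E,m[C])$. This compatibility is a formal consequence of the construction of the equivariant virtual fundamental classes in \S\ref{vfcsubsec} applied to the family $(E\times(\CC^{n+1}\setminus 0))/\CC^\times\to\CC P^n$: the cycle-space formalism simply reorganizes the localization pushforward in terms of the fixed locus $\ZZ_{\geq 0}\cdot[C]\subseteq\cZ(E)$, which reproduces the usual Atiyah--Bott contribution of connected components of the moduli space mapping to / supported on $C$.

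With this identification in place, I would quote the main theorems of Bryan--Pandharipande \cite{bryanpandharipande}, which determine $\GW_{S^1}(E_{g,k},m)$ as an explicit element of $\QQ((u))\cdot t^{-mk}$ via a two-dimensional TQFT built from Hodge integrals and the level-$(0,k)$ cap/tube/pair-of-pants amplitudes, and the main theorems of Okounkov--Pandharipande \cite{okounkovpandharipande}, which compute the analogous Donaldson--Thomas series $\DT_{S^1}(E_{g,k},m)$ as a rational function of $q$ in terms of the operator formalism on the infinite wedge. The passage from DT to PT is accomplished by the equivariant version of the Bridgeland--Toda wall-crossing formula, which writes $Z_{\DT}=Z_{\PT}\cdot Z_0$ where $Z_0$ is the degree-zero DT partition function; this produces an explicit formula for $\PT_{S^1}(E_{g,k},m)\in\ZZ((q))\cdot t^{-mk}$.

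Finally, I would verify the MNOP correspondence by a direct comparison: both explicit expressions factor compatibly with the TQFT decomposition into cap, tube, and pair-of-pants amplitudes indexed by partitions of $m$ (each amplitude being a finite product of standard edge contributions), and for each such amplitude the matching of the GW and PT expressions under the substitution $q=-e^{iu}$, after the normalizations $(-iu)^k$ and $(-q)^{-k/2}$, was already verified in \cite{bryanpandharipande,okounkovpandharipande} (or may be extracted from the $\CC^\times$-equivariant MNOP correspondence for the Calabi--Yau and equivariant vertex contributions).

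The main obstacle is bookkeeping rather than geometric: one must carefully align sign conventions, the choice of equivariant parameter $t$ versus the $u$ and $q$ variables, and in particular the normalization factors $(-iu)^k$ and $(-q)^{-k/2}$ (which encode the relative virtual dimension $k=\langle c_1(TE),m[C]\rangle/m$) so that the formulas from the two cited computations are compared in a common framework. Once these conventions are matched, the rationality of $\PT_{S^1}$ in $q$ and the equality of its value at $q=-e^{iu}$ with $\GW_{S^1}$ is exactly the content of the cited theorems.
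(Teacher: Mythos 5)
The paper gives no proof of this theorem: it is stated with the attribution \cite{bryanpandharipande,okounkovpandharipande} and serves as external input, so your proposal is not being measured against an in-paper argument but against the intent of the citation. That said, you correctly isolate the two points that actually carry content. First, the invariants $\GW_{S^1}(E_{g,k},m)$ and $\PT_{S^1}(E_{g,k},m)$ as defined in this paper push the curve enumeration class through $H^{cS^1,\relinfty}_*(\cZ(E))$ and then localize at the fixed locus $\ZZ_{\geq 0}\cdot[C]\subseteq\cZ(E)$; to invoke the cited computations one must check that this agrees with the classical equivariant localization performed directly on $\cMbar'(E)$ and on the stable pairs space, which is a functoriality-of-localization statement along the proper maps to $\cZ(E)$ that the paper leaves implicit. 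Second, Okounkov--Pandharipande compute equivariant Donaldson--Thomas (not stable pairs) invariants of local curves, so the statement for $\PT_{S^1}$ requires an equivariant DT/PT conversion; the paper only addresses this globally by citing Bridgeland in the introduction, and you are right that for local curves one needs the equivariant analogue. Your hedge about extracting the $q=-e^{iu}$ match from an ``equivariant vertex'' argument is less apt, since the local curve case is handled by the TQFT cap/tube/pants structure rather than the toric vertex, but this does not affect the logic. In sum, your sketch is a reasonable expansion of what the citation encapsulates, and like the paper itself it is ultimately a proof by assembling the cited theorems rather than an independent argument.
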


We explain the citation: Bryan--Pandharipande \cite{bryanpandharipande} compute the $S^1$-equivariant Gromov--Witten invariants of $E_{g,k}$, while Okounkov--Pandharipande \cite{okounkovpandharipande} compute the $S^1$-equivariant Donaldson--Thomas invariants of $E_{g,k}$, and it is explained in \cite[Section 5]{mptmodular} how to walk through the arguments of \cite{okounkovpandharipande} to see that they apply equally well to Pandharipande--Thomas invariants.

\subsection{Local curve elements of the Grothendieck group}\label{localcurveeltsection}

We now show how to express, explicitly, the $S^1$-localized equivariant enumerative invariants of local curves $E_{g,m,k}\to C_g$ (defined just above) as the (non-equivariant) enumerative invariants of certain elements $x_{g,m,k}\in H^*_c(\cZ/\Cpx_3;\QQ)$ of virtual dimension zero which we call \emph{equivariant local curve elements}.
This amounts to unwinding the definition of equivariant invariants in terms of the Borel construction and, more significantly, taking a geometric perspective on equivariant localization.

Given a local curve $E\to C$, we consider maps of the form
\begin{equation}\label{equivariantcutdownmap}
\cZ(E,m)\xrightarrow{\cap E_p}\Sym^mE_p\xrightarrow{\Sym^m\lambda}\Sym^m\CC\xrightarrow{\beta_r}\CC
\end{equation}
for a point $p\in C$, a linear map $\lambda:E_p\to\CC$, and a homogeneous symmetric polynomial $\beta_r:\Sym^m\CC\to\CC$ of degree $r\geq 1$.
This map is $\CC^\times$-equivariant for the weight $r$ action on the target $\CC$.
It thus determines a section $f$ of $\cL^{\otimes r}$ over $\cZ(E,m)_{\CC P^N}$ for all $N<\infty$, where $\cL$ denotes (the pullback of) the tautological line bundle on $\CC P^N$.

\begin{definition}[Equivariant local curve elements]\label{loccurveeltsdef}
Fix $g\geq 0$, $m\geq 0$, and $k\in\ZZ$.

For any tuple of sections $f_1,\ldots,f_n$ (as defined just above) of weights $r_1,\ldots,r_n>0$, whose joint zero set $f_1^{-1}(0)\cap\cdots\cap f_n^{-1}(0)\subseteq\cZ(E,m)$ is compact, and any $N<\infty$, we consider the expression
\begin{multline}\label{localcurveexpression}
_{N;f_1,\ldots,f_n}x_{g,m,k}=\prod_{i=1}^nr_i^{-1}f_i^*c_1(\cL^{\otimes r_i})\\\in H^{2n}_c(\cZ(E,m)_{\CC P^N};\QQ)=H^{2n-2N}_c(\cZ(E,m)_{\CC P^N}/\CC P^N;\QQ)\\\to H_c^{2n-2N}(\cZ/\Cpx_3;\QQ)
\end{multline}
where $\cZ(E,m)\subseteq\cZ(E)$ denotes the cycles of degree $m$.
This is a lift of the $n$th power of the hyperplane class to compactly supported cohomology.

Now the pushforward map $H^*_c(\cZ(E)_{\CC P^N}/\CC P^N)\to H^*_c(\cZ(E)_{\CC P^{N+1}}/\CC P^{N+1})$ associated to the diagram \eqref{borelcompatible} sends $_{N;f_1,\ldots,f_n}x_{g,m,k}$ to $_{N+1;f_1,\ldots,f_n,f_{n+1}}x_{g,m,k}$ for \emph{any} choice of $f_{n+1}$ (compare the discussion in Definition \ref{coboreldef}).
It follows that the image of $_{N;f_1,\ldots,f_n}x_{g,m,k}$ in $H^{2n-2N}_c(\cZ/\Cpx_3;\QQ)$ depends only on $(g,m,k)$ and the difference $n-N$.
We denote this image by $_\ell x_{g,m,k}\in H^*_c(\cZ/\Cpx_3;\QQ)$ for $\ell=n-N-mk$.
Note that these $_\ell x_{g,m,k}$ are indeed defined for all $g,m,k,\ell$: by taking $N$ sufficiently large, we may ensure that there exist $n=\ell+mk+N$ sections $f_1,\ldots,f_n$ whose joint zero set is compact.

This class $_\ell x_{g,m,k}\in H^*_c(\cZ/\Cpx_3;\QQ)$ lies in cohomological degree $2mk+2\ell$ and has chern number $2mk$, hence has virtual dimension $-2\ell$.
When $k\geq 0$, the local curve $E$ is semi-Fano, so the elements $_\ell x_{g,m,k}$ naturally lift to $H^*_c(\cZ_\semiFano/\Cpx_3;\QQ)$ for $k\geq 0$.
The elements $_\ell x_{g,m,k}$ also naturally lift to $H^*_c(\cZ_{\leq(m)}/\Cpx_3)$ (and to $H^*_c(\cZ_{\semiFano,\leq(m)}/\Cpx_3;\QQ)$ when $k\geq 0$) since all points of $\cZ(E,m)$ have multiplicity $\leq(m)$.

The \emph{equivariant local curve element} is the value $x_{g,m,k}={}_0x_{g,m,k}$ at $\ell=0$.
\end{definition}

The reader may wonder why we bother defining $_\ell x_{g,m,k}$ for general $\ell$, rather than restricting to $\ell=0$ from the beginning.
One answer is that the formula for the coproduct of $x_{g,m,k}$ (Lemma \ref{coproductlocalcurve}) involves nonzero $\ell$.

\begin{proposition}\label{explicitlocalizedpushforward}
For any curve enumeration theory (class in $H^\relinfty_*(\cZ/\Cpx_3)$ of virtual codimension zero), its induced $S^1$-equivariant curve count of degree $m$ cycles in a local curve $E_{g,k}$ is given by its evaluation on $x_{g,m,k}\in H^*_c(\cZ/\Cpx_3;\QQ)$ times $t^{-mk}$.
\end{proposition}

\begin{proof}
This amounts to unwinding definitions.

Recall that the $S^1$-equivariant count of degree $m$ curves in a local curve $E=E_{g,k}$ is defined as follows.
We take the equivariant virtual fundamental class in $H_*^{cS^1,\relinfty}(\cZ(E,m))$, we multiply it by a sufficiently high power of the hyperplane class $t^n$ and rationalize so that it lifts to $H_*^{cS^1}(\cZ(E,m);\QQ)$ (as guaranteed by Proposition \ref{equivariantlocalization}), we push forward to a point to obtain a class in $H_*^{cS^1}(\pt;\QQ)=\QQ[t]$, and we multiply by $t^{-n}$.

We may realize this procedure concretely as follows.
Realize the relevant co-Borel equivariant homology groups via the inverse limit \eqref{coborelinvlim}.
To lift $t^n$ times a class in $H_*^{cS^1,\relinfty}(\cZ(E,m))=\invlim_NH_{*+2N}^\relinfty(\cZ(E,m)_{\CC P^N})$ to $H_*^{cS^1}(\cZ(E,m))=\invlim_NH_{*+2N}(\cZ(E,m)_{\CC P^N})$ (rationally), we may fix some $f_1,\ldots,f_n$ with compact common zero locus as in Definition \ref{loccurveeltsdef} and multiply our class by the expression $_{N;f_1,\ldots,f_n}x_{g,m,k}\in H^{2n}_c(\cZ(E,m)_{\CC P^N};\QQ)$ \eqref{localcurveexpression} for every $N$.
Now we push forward to $H_*^{cS^1}(\pt)=\invlim_NH_{*+2N}(\CC P^N)$ and ask for the coefficient in front of $t^{n-mk}$.
For $N=n-mk$, the coefficient in front of $t^{n-mk}=t^N$ is the component lying in $H_0(\CC P^N)$, which is now just the evaluation of the equivariant virtual fundamental class on $_{N;f_1,\ldots,f_n}x_{g,m,k}$, which is $x_{g,m,k}$ by our choice of $N$.
\end{proof}

\begin{corollary}\label{mnoplocalgrothendieck}
The power series $\GW(x_{g,m,k})$ and $\PT(x_{g,m,k})$ satisfy the MNOP correspondence with weight $k$.
\end{corollary}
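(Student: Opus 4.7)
The corollary should follow immediately from combining the two key inputs already assembled in the excerpt: Proposition \ref{explicitlocalizedpushforward} and Theorem \ref{boplocal}. My plan is to just match up the two statements.

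First, apply Proposition \ref{explicitlocalizedpushforward} to the curve enumeration theories $\GW\in H_*^\relinfty(\cZ(\Cpx_3);\QQ)((u))$ and $\PT\in H_*^\relinfty(\cZ(\Cpx_3))((q))$ defined in \S\ref{zgwptsummary}. The resulting equivariant enumerative invariants of the local curve $E_{g,k}$ in degree $m$ are, by construction in the previous subsection, exactly $\GW_{S^1}(E_{g,k},m)$ and $\PT_{S^1}(E_{g,k},m)$. The proposition thus yields the identifications
\begin{align}
\GW_{S^1}(E_{g,k},m)&=\GW(x_{g,m,k})\cdot t^{-km},\\
\PT_{S^1}(E_{g,k},m)&=\PT(x_{g,m,k})\cdot t^{-km}.
\end{align}

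Next, invoke Theorem \ref{boplocal}: the pair $(-iu)^k\GW_{S^1}(E_{g,k},m)$ and $(-q)^{-k/2}\PT_{S^1}(E_{g,k},m)$ satisfies the MNOP correspondence. Since the MNOP correspondence is a relation between a Laurent series in $u$ and a rational function in $q$ (via $q=-e^{iu}$) and does not involve $t$, the overall common factor $t^{-km}$ on both sides may be cancelled; equivalently, MNOP for the right-hand sides of the displayed equations is equivalent to MNOP for $(-iu)^k\GW(x_{g,m,k})$ and $(-q)^{-k/2}\PT(x_{g,m,k})$. This gives the corollary.

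There is essentially no obstacle here; the real content is in Theorem \ref{boplocal} (already a deep input, cited from \cite{bryanpandharipande,okounkovpandharipande}) and in Proposition \ref{explicitlocalizedpushforward} (which reinterprets the equivariant localization pushforward as a pairing with $x_{g,m,k}$). The only point meriting a brief sentence of care is verifying that the power $t^{-km}$ appearing on both sides of the equivariant invariants is the same power, so that it can be harmlessly divided out when asserting the MNOP correspondence; this is immediate from the common degree count $2km$ for the virtual dimensions of both $[\cMbar'(E,m[C])]^\vir$ and $[P(E,m[C])]^\vir$, which matches the cohomological degree $2km$ of $x_{g,m,k}$.
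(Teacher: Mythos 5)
Your proof is correct and follows exactly the same route as the paper, which simply states ``Combine Theorem \ref{boplocal} with Proposition \ref{explicitlocalizedpushforward}.'' Your version just spells out the substitution and the cancellation of the common $t^{-km}$ factor, which the paper leaves implicit.
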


\begin{proof}
Combine Theorem \ref{boplocal} with Proposition \ref{explicitlocalizedpushforward}.
\end{proof}

Now let us establish a few elementary properties of the local curve elements.

\begin{lemma}
We have $x_{g,0,k}=1$ and $_\ell x_{g,0,k}=0$ for $\ell>0$.
\end{lemma}

\begin{proof}
In the definition of $_\ell x_{g,m,k}$, consider taking $N=0$, which means $n=mk+\ell$.
We are interested in the case that $m=0$ and $\ell\geq 0$, so $n\geq 0$ as required.
Since $m=0$, the space of cycles $\cZ(E_{g,k},m)$ of degree $m$ is a single point (the empty cycle) hence compact, so the requirement in Definition \ref{loccurveeltsdef} that $f_1,\ldots,f_n$ have compact joint zero set is vacuous.
Thus $x_{g,m,k}$ is represented by the class of the hyperplane to the power $\ell\geq 0$ in $H^{2\ell}(*;\QQ)$.
This evidently vanishes for $\ell>0$.
For $\ell=0$, it is the constant function $1$ on the empty cycle, which is the definition of the unit $1\in H^*_c(\cZ/\Cpx_3)$.
\end{proof}

\begin{lemma}\label{eqloclvanishing}
We have $_\ell x_{g,m,k}=0$ for $\ell$ sufficiently large (as a function of $(g,m,k)$).
\end{lemma}

\begin{proof}
We give two related, but not quite equivalent, arguments.

First, note that $_{N;f_1,\ldots,f_n}x_{g,m,k}\in H^{2mk+2\ell}_c(\cZ(E,m)_{\CC P^N}/\CC P^N;\QQ)$ which vanishes for degree reasons once $2mk+2\ell>2\dim_\CC\cZ(E,m)$.
Thus we have $_\ell x_{g,m,k}=0$ for $\ell>\dim_\CC\cZ(E_{g,k},m)-mk$.

Second, suppose there exist $f_1,\ldots,f_n$ on $\cZ(E_{g,k},m)$ with compact joint zero set.
Taking $\ell=n-mk$ gives $N=0$, so $_{0;f_1,\ldots,f_n}x_{g,m,k}\in H^{2n}_c(\cZ(E,m)_{\CC P^0};\QQ)$.
Now adding a single additional $f_{n+1}$ (that is, increasing $\ell$) multiplies this class by the hyperplane class, which lives in $H^2(\CC P^0)=0$, thus yielding zero.
We conclude that $_\ell x_{g,m,k}=0$ for $\ell>n_{\min}(g,m,k)-mk$, where $n_{\min}(g,m,k)$ is the minimum number of sections $f_1,\ldots,f_n$ on $\cZ(E_{g,k},m)$ with compact joint zero set.
\end{proof}

\begin{lemma}\label{rholocalcurve}
We have $\rho_d({}_\ell x_{g,m,k})={}_{\ell+(1-1/d)mk}x_{g,m/d,k}$ if $d|m$ and is zero otherwise.
\end{lemma}

\begin{proof}
We may calculate $\rho_d({}_\ell x_{g,m,k})$ by pulling back the cohomology class $_{N;f_1,\ldots,f_n}x_{g,m,k}\in H^{2n}_c(\cZ(E,m)_{\CC P^N};\QQ)$ \eqref{localcurveexpression} under the `multiply by $d$' map $\cZ(E)\to\cZ(E)$ (which is $\CC^\times$-equivariant, hence induces a map on the Borel construction).
If $m$ is not divisible by $d$, then the image of this map $d$ is disjoint from $\cZ(E,m)$, hence the pullback is zero.
If $m$ is divisible by $d$, then the pullback of a weight $r$ map $f:\cZ(E,m)\to\CC^\times$ under $d:\cZ(E,m/d)\to\cZ(E,m)$ is a weight $r$ map $\cZ(E,m/d)\to\CC^\times$.
Thus the pullback of $_{N;f_1,\ldots,f_n}x_{g,m,k}$ is $_{N;f_1,\ldots,f_n}x_{g,m/d,k}$, which maps to $_{\ell'}x_{g,m/d,k}$ where $\ell'=n-N-mk/d$.
Since $\ell=n-N-mk$, we have $\ell'=\ell+(1-1/d)mk$.
\end{proof}

\begin{lemma}\label{coproductlocalcurve}
The coproduct $\Delta$ (Definition \ref{coproductdef}) is given on equivariant local curve elements by $\Delta({}_\ell x_{g,m,k})=\sum_{m=m'+m''}\sum_{\ell=\ell'+\ell'}{}_{\ell'}x_{g,m',k}\otimes{}_{\ell''}x_{g,m'',k}$ (this sum being finite by Lemma \ref{eqloclvanishing}).
\end{lemma}

\begin{proof}
Realize $_\ell x_{g,m,k}$ by an expression $_{N;f_1,\ldots,f_n}x_{g,m,k}$ \eqref{localcurveexpression} for some collection of sections $f_1,\ldots,f_n$ whose joint zero set is compact.
The coproduct $\Delta({}_{N;f_1,\ldots,f_n}x_{g,m,k})$ (Definition \ref{coproductdef}) is defined via the disjoint union family
\begin{equation}\label{disjointunionoflocalcurves}
(E_{\CC P^N}\times\CC P^N)\sqcup(\CC P^N\times E_{\CC P^N})\to\CC P^N\times\CC P^N,
\end{equation}
whose relative cycle space is the product of relative cycle spaces
\begin{equation}\label{erelcycleprod}
\cZ(E)_{\CC P^N}\times\cZ(E)_{\CC P^N}=\cZ(E_{\CC P^N}/\CC P^N)\times\cZ(E_{\CC P^N}/\CC P^N)\to\CC P^N\times\CC P^N.
\end{equation}
Over the diagonal $\Delta(\CC P^N)\subseteq\CC P^N\times\CC P^N$, there is an addition map $\Sigma$ from this relative cycle space to the relative cycle space of $E_{\CC P^N}\to\CC P^N$.
The coproduct $\Delta({}_{N;f_1,\ldots,f_n}x_{g,m,k})$ is represented by the disjoint union family \eqref{disjointunionoflocalcurves} equipped with the compactly supported cohomology class $\Delta_!\Sigma^*({}_{N;f_1,\ldots,f_n}x_{g,m,k})$ on $\cZ(E)_{\CC P^N}\times\cZ(E)_{\CC P^N}$, which lies in degree $2N+2n$.
This class is certainly supported inside $\bigsqcup_{m=m'+m''}\cZ(E,m')_{\CC P^N}\times\cZ(E,m'')_{\CC P^N}$, so it suffices to fix a decomposition $m=m'+m''$ and show that the restriction of $\Delta_!\Sigma^*({}_{N;f_1,\ldots,f_n}x_{g,m,k})$ to $\cZ(E,m')_{\CC P^N}\times\cZ(E,m'')_{\CC P^N}$ represents $\sum_{\ell=\ell'+\ell'}{}_{\ell'}x_{g,m',k}\otimes{}_{\ell''}x_{g,m'',k}$.

To compute $\Delta_!\Sigma^*({}_{N;f_1,\ldots,f_n}x_{g,m,k})$ restricted to $\cZ(E,m')_{\CC P^N}\times\cZ(E,m'')_{\CC P^N}$, we consider the more general expression
\begin{multline}\label{moregeneralexpressioncoproductlocalcurve}
\Delta_!\Sigma^*({}_{N;f_1,\ldots,f_n}x_{g,m,k})\cup p_1^*({}_{N;f_1',\ldots,f_{n'}'}x_{g,m',k})\cup p_2^*({}_{N;f_1'',\ldots,f_{n''}''}x_{g,m'',k})\\\in H^{2N+2n+2n'+2n''}_c(\cZ(E)_{\CC P^N}\times\cZ(E)_{\CC P^N})
\end{multline}
where $p_i$ denotes the projection to the $i$th factor.
The pushforward of this compactly supported cohomology class to $H^{2(N+1)+2n+2n'+2n''+2}_c(\cZ(E)_{\CC P^{N+1}}\times\cZ(E)_{\CC P^{N+1}})$ represents the same element of the Grothendieck group of 1-cycles, and may be described in multiple ways.
Namely, this pushforward has the same form as \eqref{moregeneralexpressioncoproductlocalcurve}, and is obtained by incrementing $N$ and adding exactly one $f$, $f'$, or $f''$ (this is because a hyperplane in the diagonal $\CC P^{N+1}\subseteq\CC P^{N+1}\times\CC P^{N+1}$ may be described as the pullback of a hyperplane in either factor, or symbolically because $(\Delta_!\alpha)\cup\beta=\Delta_!(\alpha\cup\Delta^*\beta)$ and $\Delta^*p_i^*H=H$).
Now the expression \eqref{moregeneralexpressioncoproductlocalcurve} is a valid compactly supported cohomology class as long as either $f_1,\ldots,f_n$ have compact joint zero set or both $f_1',\ldots,f_{n'}'$ and $f_1'',\ldots,f_{n''}''$ have compact joint zero set.
We may thus add functions $f'$ and $f''$ and then remove functions $f$ to conclude that $\Delta_!\Sigma^*({}_{N;f_1,\ldots,f_n}x_{g,m,k,\ell})$ represents the same element in the Grothendieck group of 1-cycles as
\begin{multline}
[\Delta]\cup p_1^*({}_{\bar N;f_1',\ldots,f_{n'}'}x_{g,m',k})\cup p_2^*({}_{\bar N;f_1'',\ldots,f_{n''}''}x_{g,m'',k})\\\in H^{2\bar N+2n'+2n''}_c(\cZ(E)_{\CC P^{\bar N}}\times\cZ(E)_{\CC P^{\bar N}})
\end{multline}
where $\bar N-N=n'+n''-n$.
Now expand $[\Delta]=\sum_{\begin{smallmatrix}a'+a''=\bar N\\a',a''\geq 0\hfill\end{smallmatrix}}H^{a'}\otimes H^{a''}$, and note that we can achieve multiplication by $H^{a'}\otimes H^{a''}$ by adding $a'$ functions $f'$ and adding $a''$ functions $f''$.
We therefore obtain
\begin{multline}
\smash{\sum_{\begin{smallmatrix}a'+a''=\bar N\\a',a''\geq 0\hfill\end{smallmatrix}}}p_1^*({}_{\bar N,f_1',\ldots,f_{n'+a'}'}x_{g,m',k})\cup p_2^*({}_{\bar N;f_1'',\ldots,f_{n''+a''}''}x_{g,m'',k})\\\in H^{2\bar N+2n'+2n''}_c(\cZ(E)_{\CC P^{\bar N}}\times\cZ(E)_{\CC P^{\bar N}}),
\end{multline}
which in $H^*_c(\cZ/\Cpx_3)$ is the sum of $_{\ell'}x_{g,m',k}\otimes{}_{\ell''}x_{g,m'',k}$ in the Grothendieck group over some set of pairs $(\ell',\ell'')=(n'+a'-\bar N-m'k,n''+a''-\bar N-m''k)$.
To compute the relevant pairs $(\ell',\ell'')$, note that $\ell'+\ell''=(n'+n'')+(a'+a'')-2\bar N-k(m'+m'')=(n+\bar N-N)+\bar N-2\bar N-mk=n-N-mk=\ell$.
Now we also have the inequalities $a',a''\geq 0$, which translate to $\ell'\geq n'-\bar N-m'k=n-n''-N-m'k$ (and similarly for $\ell''$).
In this lower bound on $\ell'$, the quantities $n$, $N$, and $m'k$ are fixed, and we may choose $n''$ as large as we like.
Thus by taking $n',n''\to\infty$ we obtain the desired sum of $_{\ell'}x_{g,m',k}\otimes{}_{\ell''}x_{g,m'',k}$ over all decompositions $\ell=\ell'+\ell''$ (noting that all but finitely many of these terms vanish by Lemma \ref{eqloclvanishing}).
\end{proof}

\begin{lemma}\label{eqlocalcurveassocgraded}
Suppose $m\geq 1$ and $k\geq 0$, and consider the map $H^*_c(\cZ_{\semiFano,\leq(m)}/\Cpx_3;\QQ)\to H^*_c(\cZ_{\semiFano,=(m)}/\Cpx_3;\QQ)$.
This map annihilates $x_{g,m,k}$ when $(m-1)k>0$, and when $(m-1)k=0$ it sends $x_{g,m,k}$ to the Poincaré dual of the point $0\in H^0(C,E)=\cZ(E,m)_{\semiFano,=(m)}$ for any local curve $E\to C$ of type $(g,m,k)$ with $H^1(C,E)=0$ (which exists).
\end{lemma}

\begin{proof}
We can take $E$ to be the direct sum of a pair of generic line bundles of degrees $g-1$ and $g-1+k$.
Now let us realize the definition of $x_{g,m,k}$ in \eqref{localcurveexpression} explicitly.
The action of $\CC^\times$ on $\cZ(E,m)_{=(m)}=H^0(C,E)$ is the standard multiplication action from its structure as a complex vector space.
Thus $(\cZ(E,m)_{=(m)})_{\CC P^N}\to\CC P^N$ is the total space of $H^0(C,E)\otimes\cL$, and elements of $H^0(C,E)^*$ give allowable functions $f:(\cZ(E,m)_{=(m)})_{\CC P^N}\to\cL$.
Now let $f\in H^0(C,E)^*$ run over a basis (note $h^0(C,E)=k$) and note that the product of pullbacks $f^*c_1(\cL)$ is the Poincaré dual of the zero section inside the total space of $(\cZ(E,m)_{=(m)})_{\CC P^N}\to\CC P^N$.
Now $x_{g,m,k}$ is defined by taking a product of $n=mk+N$ total classes $f^*c_1(\cL)/r$, so there are $N+(m-1)k$ more to go.
The product we have so far is already compactly supported, so we can just multiply by the hyperplane class to the power $N+(m-1)k$.
Multiplying by $N$ copies of the hyperplane yields the Poincaré dual of a single point.
If $(m-1)k=0$, this is the desired answer.
If $(m-1)k>0$, then we need to multiply by more hyperplanes, which yields zero.
\end{proof}

\section{Generic transversality}\label{gentransec}

It is a standard result that for generic \emph{almost} complex structures on a target manifold, all simple pseudo-holomorphic maps from closed Riemann surfaces are unobstructed (see \cite{gromov,mcduffsalamon,wendl} for precise statements).
We now derive analogous results for complex structures (see Lemmas \ref{dregularsingle}, \ref{generictransversality}, and \ref{generictransversalityfamily}).
Since complex structures are much more rigid (for example, they have no nontrivial perturbations supported inside a small ball), these results are necessarily weaker than those in the almost complex setting: they only apply to a small neighborhood of a given compact 1-cycle.

We then introduce `enough divisors' (Proposition \ref{divisorcovering}) and the `graph trick' (Lemma \ref{graphtrick}), which allow us to bring generic transversality to bear on curve enumeration problems.
We use these to show that Grothendieck groups of 1-cycles are unchanged by imposing a transversality condition on the 1-cycles in question (Theorem \ref{grothendiecktransverse}).

\subsection{Regularity}\label{secregularity}

The deformation theory of a map $u:C\to X$ from a smooth proper curve $C$ to a smooth complex analytic manifold $X$ is controlled by $H^*(C,u^*TX)$.
The deformation theory of $C$ itself is controlled by $H^*(C,TC[1])$.
The deformation theory of the pair $(C,u)$ is controlled by $H^*(C,[TC[1]\to u^*TX])$ (the map $TC\to u^*TX$ being $du$).
A deformation problem (in any flavor) is said to be \emph{unobstructed} when $H^{\geq 1}=0$.

Given a finite set of points $S\subseteq C$, the deformation theory of the pair $(C,S)$ is controlled by $H^*(C,TC(-S)[1])$, and the deformation theory of a triple $(C,S,u)$ is controlled by $H^*(C,[TC(-S)[1]\to u^*TX])$.
We may also track of the behavior of the map $u$ at the marked points $S$.
If $du$ vanishes to order $k$ at $p\in S$, then there is a map from the deformation complex of $(C,S,u)$ to $T_{u(p)}X\otimes[\cO_C(-p)/\cO_C(-(k+1)p)]$ measuring the first order variation in the derivatives of order $<k$ of $du$ at $p$.
The cocone of this map then controls the deformation theory of $(C,S,u)$ subject to the condition that $du$ vanishes to order $k$ at $p$.
Similarly, if $u$ maps a subset $S_0\subseteq S$ to a single point of the target $X$, then there is an induced map from the deformation complex of $(C,S,u)$ to $(T_{u(S_0)}X)^{S_0}/T_{u(S_0)}X$, and the cocone of this map controls the deformation theory of $(C,S,u)$ subject to the condition that all points of $S_0$ should map to the same point of $X$.

Given a submersion of complex manifolds $X\to B$, we may also consider the deformation theory of pairs $(b,u:C\to X_b)$, which is an extension of $T_bB$ and the deformation theory of $u$.
Note that this differs from the deformation theory of maps from $C$ to the total space $X$ (which we will never consider).
If $X'\to B'$ is a pullback of a submersion $X\to B$, then a pair $(b',u':C\to X_{b'}')$ in $X'\to B'$ which is unobstructed remains unobstructed when pushed forward to $X\to B$ (and if $B'\to B$ is submersive then conversely if the pushforward is unobstructed then the original pair is unobstructed).

\begin{definition}[Regular]\label{regulardef}
Let $u:C\to X$ be a holomorphic map from a compact smooth curve $C$.
A point $x\in C$ will be called \emph{special} (for $u$) when $du(x)=0$ or $\#u^{-1}(u(x))>1$.
The set $S\subseteq C$ of special points is finite provided $u$ is simple (nowhere a multiple cover), which we now assume.
We now consider the deformation theory of the triple $(C,S,u)$ subject to the constraint that the points $S$ remain special with the same discrete data, meaning that all conditions $u(x)=u(x')$ and $(D^ru)(x)=0$ which hold for $u$ are preserved.
We say that the map $u$ is \emph{regular} when this deformation problem is unobstructed.

A dimension count shows that the addition of the points $S$ and their constraints (`remaining special with the same discrete data') to the deformation problem of $(C,u)$ adds to the (complex) index the quantity
\begin{equation}
\abs S-\dim_\CC X\cdot\Bigl(\abs S-\abs{u(S)}+\sum_{p\in S}\ord_p(du)\Bigr).
\end{equation}
When $\dim_\CC X\geq 3$, this quantity is $<0$ unless $S=\varnothing$.

Regularity is also defined for curves in fibers of a family $X\to B$, meaning the deformation problem includes variation in the base parameter.
If $X'\to B'$ is a pullback of $X\to B$, then regularity in $X'\to B'$ implies regularity of the pushforward to $X\to B$ (and conversely when $B'\to B$ is a submersion).
\end{definition}

Regularity is quite a strong condition on a map $u:C\to X$ (probably stronger than necessary).
Regularity of $u$, that is unobstructedness of the constrained deformation problem in Definition \ref{regulardef}, implies unobstructedness of any deformation problem for $(C,u)$ with fewer constraints (in particular, with no constraints), by inspecting the relevant exact sequence relating the two.
The most serious use of the strong unobstructedness properties encoded by the notion of regularity will come in Proposition \ref{vdimpositivebetter}, where we need to know that regular maps deform to smooth embeddings.

\subsection{Deforming by regluing near a divisor}

We will describe complex structures and families thereof by gluing.
To this end, let us introduce some notation.
For complex manifolds $U$ and $V$, denote by $\An(U,V)$ the space of analytic maps $U\to V$ with relatively compact image.
If $V$ admits an open embedding into some $\CC^n$ (which will always be the case for us), then $\An(U,V)$ is a complex analytic Banach manifold, locally modelled on the space of $n$-tuples of bounded holomorphic functions on $U$.
Given a complex manifold $U$, let $\cR(U)=\An(U^-,U)$ (the space of `regluings'), where $U^-\subseteq U$ denotes a(n unspecified) large relatively compact open subset.
More formally, $\cR(U)$ is a pro-object, namely the inverse system of all neighborhoods of the identity $1_U\in\An(U^-,U)$ over all relatively compact open sets $U^-\subseteq U$.
In all cases of interest to us, $U$ will admit an open embedding into some $\CC^n$, implying that $\cR(U)$ is a (pro) complex analytic Banach manifold.

\begin{definition}\label{generaldeform}
Given a complex manifold $X$ with an open cover $X=A\cup B$, we may deform $X$ by modifying the identification between open sets $A\supseteq A\cap B\subseteq B$.
More formally, we consider the family $\tilde X\to\cR(A\cap B)$ defined by taking the trivial families $A$ and $B$ over $\cR(A\cap B)$ and gluing via the base parameter $A\times\cR(A\cap B)\ni(a,\gamma)\sim(\gamma(a),\gamma)\in B\times\cR(A\cap B)$.
To make this construction precise, and to ensure the result is Hausdorff, we may fix relatively compact sets $A^-\subseteq A$ and $B^-\subseteq B$ and glue $(A^-\sqcup B^-)\times\cR(A\cap B)$ to obtain a proper map $\tilde X^-\to\cR(A\cap B)$.
\end{definition}

We will in fact only need a special case of the above construction, namely when the regluing takes place in a small neighborhood of a \emph{divisor} (a closed complex submanifold of codimension one).

\begin{definition}[Deforming complex structure near a divisor]\label{divisordeform}
Let $X$ be a complex manifold, and let $D\subseteq X$ be a smooth divisor.
Regarding $X$ as the gluing of $X\setminus D$ and $\Nbd D$ over their common intersection, Definition \ref{generaldeform} provides a family $\tilde X\to\cR(\Nbd D\setminus D)$.
This family is smoothly trivial (analytic perturbations of the identity map on $\Nbd D\setminus D$ extend smoothly to $\Nbd D$), so a choice of smooth trivialization determines a family of complex structures on $X$ parameterized by $\cR(\Nbd D\setminus D)$.
We will also denote this base space by $\cJ_D(X)$ (complex structures on $X$ obtained by regluing near $D$).
Of course, this isn't really a space but rather a family of spaces depending on choices of neighborhoods, etc.
Sometimes we will need to fix a specific one, but we will do this at the relevant time.

The same construction applies to families $X\to B$ of complex manifolds.
Given a \emph{relative divisor} $D\subseteq X\to B$, meaning a divisor inside the total space which is submersive over $B$, we may consider the set $\cJ_D(X/B)=\cR_B(\Nbd D\setminus D)=\bigcup_{b\in B}\cR(\Nbd D_b\setminus D_b)\to B$, a holomorphic section $\alpha$ of which determines a `vertical' (i.e.\ over $B$) regluing $X_\alpha\to B$ of $X$.
\end{definition}

The tangent space to $\cR(\Nbd D\setminus D)$ at the identity is the space of germs of holomorphic vector fields on $\Nbd D$ possibly singular along $D$.
We denote this space by $H^0(D,TX(\infty D))$ (implicitly restricting the sheaf of holomorphic sections of $TX$ over $X$ to the divisor $D$).
Such a vector field thus gives a first order deformation of the complex structure on $X$ modulo gauge, that is an element of $H^1(X,TX)$.
Concretely, this map $H^0(D,TX(\infty D))\to H^1(X,TX)$ sends a holomorphic vector field $v$ to (the Dolbeault cohomology class represented by) $\bar\partial((1-\varphi)\cdot v)$ for a smooth function $\varphi:X\to[0,1]$ supported inside an open set $U\subseteq X$ containing $D$ such that $v$ is defined on $U\setminus D$ and $\varphi\equiv 1$ in a neighborhood of $D$.
Note that the choice of $\varphi$ evidently does not matter since $\bar\partial((1-\varphi)\cdot v)-\bar\partial((1-\varphi')\cdot v)=\bar\partial((\varphi'-\varphi)\cdot v)$ is exact in the Dolbeault complex since $(\varphi'-\varphi)\cdot v$ is a smooth vector field on $X$ (in contrast to $\varphi\cdot v$, which has singularities along $D$, or $(1-\varphi)\cdot v$, which is defined only on $U$).
In terms of distributions, the map $H^0(D,TX(\infty D))\to H^1(X,TX)$ is simply $v\mapsto\bar\partial v$, where $\bar\partial v$ is meant in the distributional sense and is supported on $D$ since $v$ is otherwise holomorphic (indeed, $\bar\partial v-\bar\partial((1-\varphi)\cdot v)=\bar\partial(\varphi\cdot v)$ is exact in the distributional Dolbeault complex).

\subsection{Effect of divisorial deformations on curves}

We now identify the sort of maps which can be made transverse by deforming the complex structure near a divisor.

\begin{definition}[$D$-controlled]
Let $D\subseteq X$ be a divisor.
A map $u:C\to X$ from a smooth proper curve $C$ will be called \emph{$D$-controlled} when $u^{-1}(D)\subseteq C$ is discrete and intersects every component of $C$.
A cycle $z=\sum_im_iC_i$ in $X$ will be called \emph{$D$-controlled} when $\bigsqcup_i\tilde C_i\to X$ is $D$-controlled.
\end{definition}

It is elementary to observe that being $D$-controlled is an open condition on maps $u:C\to X$.
Now let us argue the same is true for cycles:

\begin{lemma}\label{controlledopen}
The set of $D$-controlled cycles in $\cZ(X/B)$ is open for any relative divisor $D\subseteq X\to B$.
\end{lemma}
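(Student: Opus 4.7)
The plan is to show that the two conditions comprising ``$D$-controlled'' are each open in $\cZ(X/B)$:
(i) no irreducible component of the cycle $z$ is contained in $D$;
(ii) every irreducible component of $|z|$ meets $D$.

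For (i), I would use upper semi-continuity of fiber dimension applied to the universal family. Let $\pi:\cU(X/B)\to\cZ(X/B)$ be the universal family with evaluation map $f:\cU(X/B)\to X$, and set $W=f^{-1}(D)\subseteq\cU(X/B)$. The subset $W^{\geq 1}\subseteq W$ on which the fiber of $\pi|_W$ has positive dimension is closed analytic; its image under the proper map $\pi|_W$ is therefore closed in $\cZ(X/B)$, and is precisely the locus of cycles having some component contained in $D$. Denote by $\cZ^{(i)}(X/B)$ the complement, an open set on which $\pi|_W:W\to\cZ^{(i)}(X/B)$ is finite.

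For (ii), working inside $\cZ^{(i)}(X/B)$, I would exploit local constancy of the intersection $0$-cycle $z\cap D$ and in particular its total degree $z\cdot D\in\ZZ_{\geq 0}$, which is well defined because the intersection $|z|\cap D$ is proper and every local contribution is positive by positivity of complex intersections. Given a $D$-controlled cycle $z_0=\sum_im_iC_i$, suppose for contradiction that $z_n\to z_0$ in $\cZ(X/B)$ with each $z_n$ having some component $\gamma_n$ (appearing with some positive multiplicity in $z_n$) satisfying $\gamma_n\cap D=\varnothing$; in particular $\gamma_n\cdot D=0$. The cycles $\gamma_n$ are bounded in mass, being sub-cycles of the convergent sequence $z_n\to z_0$, so by Barlet's compactness theorem for cycle spaces a subsequence converges in $\cZ(X/B)$ to a nonzero sub-cycle $\gamma_\infty\leq z_0$ with $|\gamma_\infty|\subseteq|z_0|$. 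Writing $\gamma_\infty=\sum_i\lambda_iC_i$ with $0\leq\lambda_i\leq m_i$ and some $\lambda_i>0$, and recalling that each $C_i$ meets $D$ by hypothesis (so $C_i\cdot D\geq 1$ by complex positivity), one concludes $\gamma_\infty\cdot D=\sum_i\lambda_i(C_i\cdot D)\geq 1$. Since the subsequence of $\gamma_n$ lies in $\cZ^{(i)}(X/B)$ (trivially, as $\gamma_n\cap D=\varnothing$) and converges there to $\gamma_\infty$, local constancy of $\cdot\,D$ gives $\gamma_n\cdot D=\gamma_\infty\cdot D\geq 1$ for large $n$, contradicting $\gamma_n\cdot D=0$.

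The main obstacle is condition (ii): irreducible components are not a well-behaved invariant in analytic families of cycles (a single irreducible curve can split into several, and conversely several can merge), so one cannot naively track ``which component of $z_n$ corresponds to which component of $z_0$.'' The fix is to replace the notion of component by that of sub-cycle and combine Barlet compactness with deformation invariance of the $0$-cycle $z\cap D$: these two ingredients together transfer the positive intersection multiplicity from the limit $\gamma_\infty$ back to all sufficiently nearby would-be bad components $\gamma_n$, forcing them to meet $D$.
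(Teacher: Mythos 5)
Your proof is correct, and its core mechanism for condition (ii) is the same as the paper's: a component of a nearby cycle degenerates onto a nonzero subcycle $\gamma_\infty$ of $z_0$, which has positive intersection number with $D$ by complex positivity of intersections, and this number is carried back to the nearby component by deformation-invariance of proper intersection numbers. The paper compresses exactly this into the phrase that $C_i'$ ``is homologous to a positive linear combination of some $C_i$'s''; you unfold it into an explicit Barlet-compactness argument. Where your proof genuinely adds something is condition (i): as written, the paper's proof concludes only that nearby components have positive algebraic intersection with $D$ and hence nonempty geometric intersection, which by itself does not rule out a nearby component lying entirely inside $D$ (in which case $u^{-1}(D)$ fails to be discrete). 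Your fiber-dimension upper-semi-continuity argument on the universal family closes this gap cleanly, and in fact you need it anyway so that $\gamma_\infty\cdot D$ is a \emph{proper} intersection number in step (ii). The one step you leave implicit is that $\gamma_\infty\ne 0$; this is standard (the empty cycle is isolated in $\cZ$, e.g.\ by local constancy of sheet numbers over local slices, or by a mass lower bound for nonempty compact $1$-cycles in a fixed compact region), but since you extract $\gamma_\infty$ from a compactness argument it deserves a word.
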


\begin{proof}
Suppose $z=\sum_im_iC_i\in\cZ(X/B)$ is $D$-controlled.
Since $C_i$ intersects $D$ geometrically, the algebraic intersection number $C_i\cdot D$ is positive.
If $z'=\sum_im_i'C_i'$ is close to $z$, then every $C_i'$ is homologous to a positive linear combination of some $C_i$'s, hence also has positive algebraic intersection with $D$, thus \emph{a fortiori} intersects it geometrically.
\end{proof}

For any family $X\to B$, the deformation complex of a map $u:C\to X_b$ maps to the deformation complex of the pair $(b,u:C\to X_b)$, with cokernel $T_bB$.
This induces a map from $T_bB$ to the obstruction space of the map $u$, whose cokernel is the obstruction space of the pair $(b,u)$.
Explicitly, this map is the Kodaira--Spencer map $T_bB\to H^1(X_b,TX_b)$ followed by restriction (e.g.\ of Dolbeault representatives) from $H^1(X_b,TX_b)$ to $H^1(C,TX_b)$.

We now come to the key technical result underlying generic transversality, which says that the space of first order deformations (of a complex structure) associated to a divisor $D$ by Definition \ref{divisordeform} surjects onto the obstruction space of any $D$-controlled simple map $u$ (via the map defined in the paragraph just above).

\begin{lemma}[Enough first order deformations]\label{dregularsingle}
Let $u:C\to X$ be a simple map from a smooth proper curve $C$ to a complex manifold $X$, and let $D\subseteq X$ be a divisor.
If $u$ is $D$-controlled, then the map
\begin{equation}\label{dregularsingleeqn}
H^0(\Nbd(D\cap u(C)),TX(\infty D))\to H^1(C,TX)
\end{equation}
is surjective for every sufficiently small neighborhood of $D\cap u(C)\subseteq D$ inside $X$.
In fact, it is surjective onto the obstruction space for the problem of deforming the map $u:C\to X$ subject to any finite number of point constraints (such as those appearing in the notion of `regularity' in Definition \ref{regulardef}).
\end{lemma}

\begin{proof}
Recall from above that the map in question sends a vector field $v$ to $\bar\partial((1-\varphi)\cdot v)$ (note that the `primitive' $(1-\varphi)\cdot v$ is not defined globally on $X$, so does not trivialize this element in cohomology), for any choice of cutoff function $\varphi$ (supported near $D$ and identically equal to $1$ in a neighborhood of it).
We will take $\varphi$ to be (a smoothing of) the characteristic function of a small tubular $\Nbd(D)$ of $D$.
Fix a local projection $\pi:X\to\CC$ (defined near $u(C)\cap D$) with $D=\pi^{-1}(0)$, and let $\Nbd(D)=\pi^{-1}(\DD_\varepsilon)$ be the inverse image of the $\varepsilon$-disk $\DD_\varepsilon\subseteq\CC$.
Now $\pi\circ u:C\to\CC$ is a ramified cover near the origin, so the inverse image $u^{-1}(\partial\Nbd(D))$ is a union of circles, one going around each point of $u^{-1}(D)$.
We will show that by varying $v$, we can make $u^*(\bar\partial((1-\varphi)\cdot v))$ approximate the delta function at any point of this union of circles $u^{-1}(\partial\Nbd(D))$.
This implies the desired surjectivity of \eqref{dregularsingleeqn} since every nonzero element of $H^0(C,K_C\otimes T^*X)=H^1(C,TX)^*$ has nonzero restriction to $u^{-1}(\partial\Nbd(D))$ by holomorphicity and unique continuation (recall that $u^{-1}(\partial\Nbd(D))$ meets all components of $C$).
Note that adding finitely many point constraints to the deformation problem of $u:C\to X$ (which leads to considering sections of $K_C\otimes T^*X$ which may have poles at said points) does not affect this argument, since we can always choose $u^{-1}(\partial\Nbd(D))$ to be disjoint from these finitely many special points (even if they happen to coincide with $u^{-1}(D)$).

It remains to prove that we can make $\bar\partial((1-\varphi)\cdot v)$ approximate a delta function at any point of $u^{-1}(\partial\Nbd(D))$.
Fix local coordinates $X=\CC_z\times\CC^2_{x,y}$ near an (isolated, by hypothesis) intersection point $u(C)\cap D$ in which $\pi$ is the $z$-coordinate, so $D=\{z=0\}=0\times\CC^2_{x,y}$ and $\Nbd(D)=D^2_\varepsilon\times\CC^2_{x,y}$.
Choose $\varphi$ to be a smoothing of the characteristic function $H(\varepsilon-z\bar z)$ of the $\varepsilon$-disk in $\CC_z$, so that $\bar\partial(1-\varphi)$ is a smoothing of $\delta(\varepsilon-z\bar z)zd\bar z$.
Writing $v$ in Laurent series expansion $v=\sum_kf_k(x,y)z^k\partial_z$, we calculate that $\bar\partial((1-\varphi)\cdot v)=\bar\partial(1-\varphi)\cdot v$ is a smoothing of $\delta(\varepsilon-z\bar z)\sum_kf_k(x,y)z^{k+1}d\bar z\partial_z$.
Now the factor $\sum_kf_k(x,y)z^{k+1}d\bar z\partial_z$ can approximate any continuous function on $\partial D^2_\varepsilon\times\CC^2_{x,y}=\partial\Nbd(D)$ which is holomorphic on fibers $e^{i\theta}\times\CC^2_{x,y}$ (use approximation by Fourier polynomials in the $\partial D^2_\varepsilon$ direction).
The pullbacks of such functions to $u^{-1}(\partial\Nbd(D))$ are dense in continuous functions since $u$ is simple and $\pi$ is a ramified covering.
\end{proof}

\subsection{Generic transversality}

We now use the existence of enough first order deformations (Lemma \ref{dregularsingle}) to prove `generic transversality', namely that generic divisorial deformations are regular in the following sense:

\begin{definition}[$D$-regular]\label{dregular}
We say that a complex manifold $X$ is \emph{$D$-regular} (for a given divisor $D\subseteq X$) when every $D$-controlled simple map is regular (Definition \ref{regulardef}).
More generally, we make the same definition for any family $X\to B$ and any relative divisor $D$.
\end{definition}

For all our eventual applications, we will need the families version of generic transversality given in Lemma \ref{generictransversalityfamily} below.
But for sake of exposition, we begin by treating the case of a single complex manifold:

\begin{lemma}[Generic transversality]\label{generictransversality}
Fix a complex manifold $X$, a divisor $D\subseteq X$, and a finite set $A\subseteq D$.
After `trimming' $X$ to remove a closed subset contained in $D\setminus A$, there exists a Banach manifold $\cJ_D(X)$ of $D$-deformations of $X$, a generic point of which is $D$-regular.
\end{lemma}

The effect of trimming $X$ is to `localize' the problem near the finite set $A$: it means we only divisorially deform $X$ in a neighborhood of $A$ and that we only care about curves whose intersection with $D$ is contained in a neighborhood of $A$ (in particular, the statement becomes vacuous when $A=\varnothing$).

\begin{proof}
This is a typical argument based on Smale's infinite-dimensional Sard theorem \cite{smalesard}.
The idea is that Lemma \ref{dregularsingle} asserts $D$-regularity of the universal family over $\cJ_D(X)$, and the infinite-dimensional Sard theorem converts this to $D$-regularity of fibers over generic points of $\cJ_D(X)$.

We begin by fixing a precise space $\cJ_D(X)$ to consider.
Let $\DD\subseteq\CC$ denote the unit disk.
Fix coordinates $\DD\times\DD^{n-1}$ on $X$ near each point $a\in A$ with $a=(0,0)$ and $D=0\times\DD^{n-1}$, and \emph{remove from $X$ the part of $D$ outside the interiors of the charts $0\times\DD^{n-1}$}.
We let $\cJ_D(X)$ consist of holomorphic sections $f$ of the tangent bundle of $(\DD\setminus 0)\times\DD^{n-1}$ with $\|f\|_2<\varepsilon$ for some $\varepsilon>0$, where the $L^2$-norm is weighted near $0\times\DD^{n-1}$ so that meromorphic sections have finite norm (this space is most naturally identified with the Lie algebra of $\cR((\DD\setminus 0)\times\DD^{n-1})$, and is subsequently mapped to it via the exponential map).
By smearing the Cauchy Integral Formula and appealing to Cauchy--Schwarz, we see that $\|f\|_\infty$ over any compact subset of the interior of $(\DD\setminus 0)\times\DD^{n-1}$ is bounded linearly in $\varepsilon$.
Thus for sufficiently small $\varepsilon>0$, the reglued family (Definition \ref{divisordeform}) is defined over $\cJ_D(X)$.
Using the $L^2$-norm here guarantees that the space $\cJ_D(X)$ is separable.

Now consider a compact smooth (not necessarily connected!) surface $C$ and a smooth family of almost complex structures on $C$ parameterized by a finite-dimensional smooth manifold $\cJ(C)$.
Now $W^{k,2}(C,X)$ is a smooth Banach manifold for any integer $k\geq 2$ (note that $W^{k,2}\subseteq C^0$ for such $k$), whose product with $\cJ(C)\times\cJ_D(X)$ carries the smooth Banach bundle
\begin{equation}
\begin{tikzcd}
\cJ(C)\times W^{k,2}(C,X)\times_{W^{k-1,2}(C,X)}W^{k-1,2}(C,\overline{TC}\otimes_\CC TX)\times\cJ_D(X)\ar[d]\\
\cJ(C)\times W^{k,2}(C,X)\times\cJ_D(X)\ar[u,bend left,"\bar\partial"]
\end{tikzcd}
\end{equation}
with a section $\bar\partial$ measuring the failure of the map $C\to X$ to be holomorphic.
Now the linearization (derivative) of $\bar\partial$ at a triple $(u:C\to X,j,J)$ is a map
\begin{equation}
W^{k,2}(C,u^*TX)\oplus T_j\cJ(C)\oplus T_J\cJ_D(X)\to W^{k-1,2}(C,\overline{TC}\otimes_\CC u^*TX)
\end{equation}
whose restriction to the first direct summand is the deformation complex of the map $u$.
Lemma \ref{dregularsingle} guarantees that the restriction to $T_J\cJ_D(X)$ surjects onto the obstruction space $H^1(C,TX)$ if $u$ is $D$-controlled.
Thus $\bar\partial$ is transverse to zero at every $D$-controlled holomorphic triple $(u,j,J)$ with $u$ simple.

Now restrict to the clopen subset of $W^{k,2}(C,X)$ consisting of those maps whose restriction to every component of $C$ has positive algebraic intersection with $D$ (thus a holomorphic map is $D$-controlled iff it lies in this set).
Over this clopen set, the section $\bar\partial$ is transverse to zero at simple, hence its zero set $\bar\partial^{-1}(0)_\simple$ (the open simple locus) is a smooth Banach manifold, and the projection map
\begin{equation}
\bar\partial^{-1}(0)_\simple\to\cJ_D(X)
\end{equation}
is Fredholm by ellipticity of the deformation complex of the map $u$.
Now Sard--Smale \cite{smalesard} implies that the fibers of this map over generic elements of $\cJ_D(X)$ are regular.
We can cover all curves using countably many pairs $(C,\cJ(C))$, so we conclude that for generic elements of $\cJ_D(X)$, all $D$-controlled simple maps are unobstructed.

Now regularity is stronger than unobstructedness, since it involves a deformation problem with point constraints.
To prove regularity of $D$-controlled simple maps with respect to generic elements of $\cJ_D(X)$, it suffices to apply the above argument to triples $(C,\cJ(C),\gamma)$ where $\gamma$ is a finite set of point constraints (again, countably many such triples suffice to cover all possible situations).
\end{proof}

\begin{lemma}[Generic transversality in a family]\label{generictransversalityfamily}
Fix a family of complex manifolds $X\to B$ over a finite simplicial complex $B$, a relative divisor $D\subseteq X\to B$, and a set $A\subseteq D$ whose map to $B$ is proper with finite fibers.
After `trimming' $X$ to remove a closed subset contained in $D\setminus A$ and subdividing $B$, there exists a Banach space of $D$-deformations of $X\to B$, a generic point of which is $D$-regular.

More generally, suppose $D\subseteq X\times_BV\to V$ is a relative divisor over a subcomplex $V\subseteq B$, and suppose $U\subseteq B$ is a constructible open set (i.e.\ the complement of a subcomplex) contained in $V$.
Then after trimming $X$ over $V$, there exists a Banach space of $D$-deformations of $X\to B$ vanishing over the complement of $U$, a generic point of which is $D$-regular over $U$.
\end{lemma}

\begin{proof}
The argument is similar to Lemma \ref{generictransversality}.

We first argue that there exists a finite collection of subcomplexes $M_i\subseteq B$ and charts
\begin{equation}\label{divisorcoveringcharts}
\begin{tikzcd}
(\DD,0)\times\DD^{n-1}\times M_i\ar[d]\ar[r,hook,"\phi_i"]&(X,D)\ar[d]\\
M_i\ar[r,hook]&B
\end{tikzcd}
\end{equation}
with the property that $\phi_i^{-1}(A)\subseteq 0\times\DD_{1/2}^{n-1}\times M_i$ (where $\DD_{1/2}\subseteq\DD$ denotes the subdisk of radius $\frac 12$) and whose interiors $0\times\DD_{1/2}^{n-1}\times M_i^\circ$ jointly cover $A$ (note that these charts may overlap arbitrarily); here $\DD\subseteq\CC$ denotes the closed unit disk.
Such a collection of charts is illustrated as follows:
\begin{equation}
\tikzset{ccc/.initial=\textwidth/15}
\begin{tikzpicture}[x=\pgfkeysvalueof{/tikz/ccc},y=\pgfkeysvalueof{/tikz/ccc},baseline={([yshift=-.8ex]current bounding box.center)}]
\draw(0,1)rectangle(10,5);\node[anchor=west]at(10.5,3){$X$};
\draw[thick](0,0)to(10,0);\node[anchor=west]at(10.5,0){$B$};
\draw(1,2)to[out=0,in=180](6,3)to[out=180,in=-25](5,3.5)to[out=155,in=0](4,4)to[out=0,in=180](9.5,4.8);\node[anchor=east]at(1,2){$A$};
\newcommand\rectanglefromnodes[2]{\draw(#1)to(#1|-#2);\draw(#1-|#2)to(#2);\draw[ultra thick](#2)to(#1|-#2);\draw[ultra thick](#1-|#2)to(#1);}
\foreach\x/\y/\xx/\yy in{.5/1.5/2/2.5,1.8/1.8/4/3,3.5/2.3/5.4/3.1,5/2.7/7/3.7,5.2/3.2/4.5/3.95,3.5/3.4/4.8/4.5,4.65/3.85/7.5/4.8,6.5/4.1/9.7/4.9}
{
	\coordinate(A)at(\x,\y);
	\coordinate(B)at(\xx,\yy);
	\rectanglefromnodes{A}{B};
}
\end{tikzpicture}
\end{equation}
Note how the bold horizontal boundary of each chart $0\times(\DD^n\setminus\DD_{1/2}^{n-1})\times M_i$ is indeed disjoint from $A$.

To construct such a system of charts \eqref{divisorcoveringcharts}, argue as follows.
By compactness of $A$ (note that $B$ is compact and $A\to B$ is proper), it suffices to produce a chart covering a neighborhood of any given point of $A$.
Given a point $a\in A$ over $b\in B$, we may fix a chart $(\DD,0)\times\DD^{n-1}\subseteq(X_b,D_b)$ in which $a=(0,0)$ and which contains no other point of $A$ (note $A\to B$ has finite fibers).
We may then extend this to a chart $(\DD,0)\times\DD^{n-1}\times M$ on $(X,D)$ over a neighborhood $M$ of $b\in B$.
Properness of $A\to B$ implies that the inverse image of $A$ in these coordinates will be contained in $0\times(D^2_{1/2})^{n-1}\times M$, after possibly shrinking $M$.
Now this $M$ is an open subset of $B$, rather than a subcomplex, but this may be rectified by subdividing $X$.

Now given any collection of charts $\phi_i:(\DD,0)\times\DD^{n-1}\times M_i\hookrightarrow(X,D)$ as above, we may construct $D$-deformations of (a trimming of) $X\to B$ as follows.
Consider any collection of sections of $T_{X/B}$ over each punctured chart $(\DD\setminus 0)\times\DD^{n-1}\times M_i$ which vanish over (the inverse image of) $\partial M_i=M_i\setminus M_i^\circ$ (the topological boundary, as a subset of $B$).
The (naive) sum of all such sections may (despite having no control on how the charts intersect each other) be exponentiated as in Definition \ref{divisordeform} to give a regluing of $X\setminus\bigcup_i0\times(\partial(\DD^{n-1}))\times M_i\to B$ (note that the locus we remove from $X$ is indeed disjoint from $A$).

Finally, we should specify an appropriate Banach space of $D$-deformations.
On each chart $\DD\times\DD^{n-1}\times M$, we consider the space of simplex-wise analytic sections of $T_{X/B}$ over $(\DD\setminus 0)\times\DD^{n-1}\times M$ which vanish over $\partial M$ (and over $B'$, if present).
We consider the $L^2$-norm on this space which integrates over a small neighborhood inside the complexification $(\DD\setminus 0)\times\DD^{n-1}\times M_\CC$, with exponential weight near the puncture $0\times\DD^{n-1}\times M_\CC$ so that poles of arbitrary orders there are allowed.
Now we take our Banach space of $D$-deformations to be the direct sum of all these spaces.
Now the same argument used in the proof of Lemma \ref{generictransversality} shows that generic elements of this Banach space are $D$-regular.
\end{proof}

\subsection{Enough divisors}

To get any mileage out of generic transversality as formulated in Lemmas \ref{generictransversality} and \ref{generictransversalityfamily}, we need a sufficiently rich collection of divisors.
Given a single 1-cycle $z$ in a single threefold $X$, it is trivial to observe that, after replacing $X$ with a small neighborhood of $z$, there exists a divisor $D\subseteq X$ controlling $z$ (namely, $D$ is a union of transverse disks at a finite collection of smooth points on $z$).
To apply Lemma \ref{generictransversalityfamily}, we will need to know this result for families of cycles in families of threefolds, which will involve a nontrivial inductive argument to keep the chosen divisors disjoint:

\begin{proposition}[Enough divisors]\label{divisorcovering}
Let $X\to B$ be a family of complex threefolds over a finite simplicial complex, and let $K\subseteq\cZ(X/B)$ be a compact analytic set for which the map $K\to B$ is injective.
After possibly removing a closed subset from $X$ disjoint from $K$ (call this `trimming $X$ near $K$'), there exists a finite collection of open subsets $U_i\subseteq B$ and disjoint relative divisors $D_i\subseteq X\times_BU_i\to U_i$ such that every $z\in K$ is $D_i$-controlled for some $i$.
\end{proposition}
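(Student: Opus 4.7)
The plan is to first build a local controlling divisor around each cycle in $K$, then extract a finite sub-cover using compactness, and finally arrange the divisors to be pairwise disjoint by exploiting the freedom in the slicing construction together with the allowed shrinking of $X$.

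\textbf{Step 1 (local controlling divisor).} Given $z_0 = \sum_i m_i C_i \in K$ with $b_0 = \pi(z_0) \in \abs B$, pick for each component $C_i$ a smooth point $p_i \in C_i$ lying on no other $C_j$. Since $X \to B$ is a submersion, choose local holomorphic coordinates near $p_i$ compatible with the fibration and take a small codimension-one complex submanifold $\tilde D_i$ through $p_i$ transverse to $C_i$; extend trivially over a small base neighborhood $V_{z_0}$ of $b_0$. The union $D^{(z_0)} = \bigsqcup_i \tilde D_i$ (the $p_i$ being distinct) is a relative divisor over $V_{z_0}$ defined in a small open tube in $X$ around $\{p_1,\dots,p_k\}$. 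By the same positivity-of-intersection argument as in Lemma \ref{controlledopen}, every cycle $z \in K$ whose base image lies in a sufficiently small sub-neighborhood of $b_0$ is $D^{(z_0)}$-controlled.

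\textbf{Step 2 (compactness).} Since $\pi|_K$ is injective and $K$ is compact, the image $\bar K := \pi(K) \subseteq \abs B$ is compact. The neighborhoods $V_{z_0}$ from Step 1 cover $\bar K$, so we extract a finite sub-cover, obtaining opens $V_1,\dots,V_n \subseteq \abs B$ together with relative divisors $D_j$ over $V_j$ such that every $z \in K$ with $\pi(z) \in V_j$ is $D_j$-controlled.

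\textbf{Step 3 (disjointness).} This is the main obstacle, and it is here that one uses the permitted shrinking of $X$. The critical observation is that each $D_j$ was built from small slices centered at freely chosen points $p_{j,\ell}(b)$ along the smooth loci of components of cycles in $K_j$. Because irreducible curve components contain infinitely many points and the $\pi|_K$-injectivity puts distinct cycles in disjoint fibers of $X$, we may make the choices in Step 1 inductively in $j$ so that, over each overlap $V_i \cap V_j$, the slicing points used for $D_i$ and $D_j$ are distinct and (by shrinking the slicing neighborhoods further) their small tubular neighborhoods in $X$ are disjoint. After this rearrangement, the potential intersections $D_i \cap D_j$ are confined to a closed analytic subset of $X$ disjoint from $\abs K := \bigcup_{z \in K}\supp z$ (which is compact). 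Removing this subset, together with the boundaries of the local slicing charts, from $X$ yields an open neighborhood of $\abs K$ in which the $D_j$ become proper, closed, pairwise disjoint relative divisors.

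\textbf{Main difficulty.} The subtle point is Step 3: the slicing choices must vary coherently in $b \in \bar K$ (so that $D_j$ is analytic and relative over $V_j$), and they must avoid the previously chosen divisors uniformly over $V_i \cap V_j$. The inductive construction works because $\pi|_K$-injectivity implies that supports of distinct cycles are fiberwise disjoint, so the only genuine coincidences occur in the base direction, leaving the two-complex-dimensional freedom within each curve component to separate the slicing loci.
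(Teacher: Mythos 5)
Your outline matches the paper's at the level of Steps 1 and 2 (local construction of controlling divisors plus a compactness argument to get a finite cover), but Step 3 — the heart of the matter — contains a genuine gap, and Step 1 also elides a real analytic subtlety.

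\textbf{The disjointness step.} You propose to choose the slicing points inductively in $j$ so that over each overlap $V_i\cap V_j$ the points $p_i(b)$ and $p_j(b)$ are distinct, then shrink tubes to make the $D_j$'s disjoint near $\abs K$. Note first that $\pi|_K$-injectivity does \emph{not} help here in the way you suggest: it says that over a fixed $b$ there is only one cycle $z_b\in K$, so every divisor $D_i$ with $b\in V_i$ must meet the \emph{same} support $\supp z_b$ — the constraint is \emph{harder}, not easier, because of injectivity. There is no two-complex-dimensional freedom; a curve component is one complex dimension, so the slicing point gives one complex (two real) degree of freedom per component. Now the issue: the chosen slicing point $p_j$ must be an analytic section of the universal curve over $\pi(K)\cap V_j$, and it must avoid a previously chosen section $p_i$ over $\pi(K)\cap V_i\cap V_j$. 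Two sections of a bundle with two-real-dimensional fibers over a base of real dimension $d$ generically intersect along a locus of dimension $d-2$. For $\dim_\RR K\geq 2$ this generic intersection is non-empty, and you have not given any argument that it can be made literally empty by a clever (analytic) choice of $p_j$. The paper avoids this trap: it does not try to avoid intersections, but only shows that after a generic choice the ``bad'' locus $K'\subseteq K$ (where some $D_i\cap D_j$ meets $\supp z$) has complex codimension $\geq 1$ in $K$, and then inducts on $\dim K$, solving a strengthened problem that tracks a ``previous'' singular divisor $D^\prev$ to be avoided along the way. Your Step 3 is an informal restatement of the desired conclusion of that induction, not a proof of it.

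\textbf{The local construction over a semi-simplicial base.} ``Extend trivially over a small base neighborhood'' is not obviously valid when $b_0$ lies on a face of a simplex. Recall that a family over $\Delta^n$ here means a family over a neighborhood of $\Delta^n$ in its complexification, with germ-of-isomorphism gluing along faces. Constructing a relative divisor near a point on a face therefore requires extending a holomorphic function from (complexifications of) the boundary faces to (a complexification of) the simplex interior. The paper spends a paragraph on exactly this, reducing to the extension problem from $\partial\RR^n_{\geq0}\times\CC^n$ to $\RR^n_{\geq0}\times\CC^n$ and solving it by the inclusion-exclusion formula $f(y,z)=\sum_{\varnothing\ne S}(-1)^{\abs S-1}f(\{y_i\}_{i\notin S},\{0\}_{i\in S},z)$, noting that the residual freedom (adding $y_1\cdots y_n$ times an arbitrary analytic function) is then exactly what the disjointness induction consumes. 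Your proof asserts the existence of the local relative divisor without confronting this.

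In short: your plan's outline is right and the dimension-count intuition in Step 3 points in the right direction, but the passage from ``there is enough freedom'' to ``the $D_i\cap D_j$ can be pushed off $\abs K$'' requires the inductive codimension argument the paper gives, and the local construction requires the analytic extension across simplex faces. Neither is supplied.
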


We emphasize that, in this statement, it is utterly irrelevant to ask that the $U_i$ cover $B$, rather the goal is to `cover' (or, more precisely, control) all the cycles in $K$.

\begin{proof}
First, let us discuss how to construct (germs of) relative divisors $D\subseteq X\to B$ locally near a given point $x\in X$.
Suppose $x$ lies over the interior of a simplex $\sigma\subseteq B$.
Define $D_\sigma\subseteq X_\sigma$ as the transverse zero set $D_\sigma=f_\sigma^{-1}(0)$ of a (germ of) holomorphic map $f_\sigma:(X_\sigma,x)\to(\CC,0)$ defined near $x$.
To explain the term `holomorphic' for $\pi_\sigma$, recall that $X_\sigma\to\sigma$ is the restriction of a given family $X_\sigma^\CC\to\sigma^\CC\cong\CC^{\dim\sigma}$ over the complexification, so it makes sense to require that $f_\sigma$ be the restriction to $X_\sigma$ of a (necessarily unique) holomorphic function on $X_\sigma^\CC$.
For $D_\sigma$ to be a \emph{relative} divisor, we need it to be submersive over $\sigma$, which in terms of $f_\sigma$ is the condition that $df_\sigma|T_{X/B}$ is surjective.
To extend $D_\sigma$ to a neighborhood of $x$ in the total space $X$, it suffices to extend $f_\sigma$ to a (simplexwise) holomorphic map $f$ (note that surjectivity of $df|T_{X/B}$ is an open condition), which can be done using induction on simplices and Lemma \ref{functionextension}.
Note that there are plenty of analytic functions on a simplex vanishing on its boundary, which will be important below when we want to `choose divisors generically'.
We note that the resulting germ of relative divisor $D\subseteq X\to B$ can be promoted to a true (not germ) relative divisor over an open neighborhood of the image of $x$ in $B$, by removing a suitable closed subset of $X$.

Given the local existence of relative divisors, compactness of $K$ immediately produces a finite collection of relative divisors $D_i\subseteq X\times_BU_i\to U_i$ ($U_i\subseteq B$ open) such that every $z\in K$ is $D_i$-controlled for some $i$.
These divisors, however, need not be disjoint.
Rather than ensuring the divisors are disjoint ($D_i\cap D_j=\varnothing$ for all $i\ne j$), note that it is enough to ensure the seemingly weaker condition that $D_i\cap D_j\cap z=\varnothing$ for all $z\in K$ and $i\ne j$, as then $\bigcup_{i<j}D_i\cap D_j\subseteq X$ is closed and disjoint from $K$ so we can simply remove it.
To produce divisors with this property, we use an inductive argument, the key being that intersections $D\cap D'\cap z$ generically happen over a codimension two ($\dim(\cU/\cZ)-\codim D-\codim D'=-2$) subset of $K$.

To make the inductive argument work, we consider the following more general problem.
In addition to the data of $X\to B$ and $K\subseteq\cZ(X/B)$, fix a relative singular divisor $D^\prev\subseteq X\to B$ whose intersection with every cycle in $K$ is discrete.
By `relative singular divisor', we mean a (not necessarily disjoint or transverse) union of compact subsets of relative divisors over open subsets of the base.
We then ask for a finite collection of relative divisors $D_i\subseteq X\times_BU_i\to U_i$ ($U_i\subseteq B$ open) which together control all $z\in K$ and which are disjoint from each other and from $D^\prev$.
Our original problem is the special case $D^\prev=\varnothing$.

Now we claim that this more general problem always has a solution.
We argue by induction on $\dim K$, the case $K=\varnothing$ being trivial.
Suppose given an instance of the more general problem with some given $(D^\prev,K)$.
Given any choice of relative divisors $D_i$ (not necessarily mutually disjoint or disjoint from $D^\prev$) controlling all $z\in K$, we consider the problem of the pair
\begin{multline}
(D^{\prev\prime},K')=\Biggl(D^\prev\cup\bigcup_iD_i,\\\pi_K\biggl(\Bigl(\cU(X/B)\times_{\cZ(X/B)}K\Bigr)\cap\bigcup_i\Bigl(D_i\cap\bigl(D^\prev\cup\bigcup_{j\ne i}D_j\bigr)\Bigr)\biggr)\Biggr).
\end{multline}
Choosing the $D_i$ generically ensures that $K'$ has real codimension at least two inside $K$, and hence that this new problem has a solution by the induction hypothesis.
We may thus choose relative divisors $D_i'$ which solve $(D^{\prev\prime},K')$, that is they are mutually disjoint, disjoint from $D^\prev$ and from every $D_i$, and they control all $z\in K'$, hence all $z$ in a neighborhood of this set (Lemma \ref{controlledopen}).
Now to solve our original problem $(D^\prev,K)$, we take these $D_i'$ along with the restrictions of the $D_i$ to $U_i\setminus\Nbd K'$.
The intersections of these divisors with each other and with $D^\prev$ will be disjoint from $\cU(X/B)\times_{\cZ(X/B)}K$ by definition of $K'$ hence can simply be removed from $X$.
\end{proof}

\subsection{Semi-regularity}\label{semiregularsec}

Transversality (in its various forms discussed in Section \ref{secregularity}) is a notion for maps $u:C\to X$, and it is much less clear what a reasonable notion of transversality for 1-cycles $\sum_im_iC_i$ in $X$ would be.
The following notion is rather blunt but will serve our purpose (it is not intended to be related to semi-regularity in the sense of Bloch \cite{blochsemiregularity}).

\begin{definition}[Semi-regular]\label{semiregularsmooth}
For a family of complex manifolds $X\to B$ over a complex (or real) manifold $B$, we will call a 1-cycle $z=\sum_im_iC_i\in\cZ(X/B)$ \emph{semi-regular} when the (necessarily simple!) map $\bigsqcup_i\tilde C_i\to X$ is regular (in the family $X\to B$) in the sense of Definition \ref{regulardef}.
We denote by $\cZ^\semiregular\subseteq\cZ$ the locus of semi-regular cycles.
\end{definition}

Semi-regularity is not preserved by, but does descend along, pullbacks.
Namely, suppose $X'\to B'$ is a pullback of $X\to B$, fix $z'\in\cZ(X'/B')$ and denote by $z\in\cZ(X/B)$ its image.
If $z'$ is semi-regular then $z$ is semi-regular.
If $z$ is semi-regular and $B'\to B$ is a submersion, then $z'$ is semi-regular.
The condition that $B'\to B$ be a submersion is necessary: the deformation theory of $z$ may be unobstructed, yet become obstructed when we restrict from varying in $B$ to varying in $B'$.

\begin{definition}[Semi-regular]\label{semiregularsimplex}
For a family of complex manifolds $X\to B$ over a simplicial complex $B$, we will call a 1-cycle $z=\cZ(X/B)$ \emph{semi-regular} when it is semi-regular as a point of $\cZ(X_{\sigma^\circ}/\sigma^\circ)$ where $\sigma\subseteq B$ is the smallest simplex containing the image of $z$ (that is, the unique simplex whose interior contains the image of $z$).
Note that here we are considering semi-regularity in the family $X_{\sigma^\circ}\to\sigma^\circ$ over the real simplex $\sigma^\circ$, not over its complexification (which would be a weaker condition).
\end{definition}

We use this `stratum-wise' definition of semi-regularity since it is compatible with pullback: we have $\cZ(X'/B')^\semiregular=\cZ(X/B)^\semiregular\times_BB'$ for $X'\to B'$ the pullback of $X\to B$ along a map of simplicial complexes $B'\to B$.

\subsection{Interior semi-regularity}\label{interiorsemiregulargrothendieck}

Now we would like define a variant of the Grothendieck group of 1-cycles $H^*_c(\cZ/\Cpx_3)$ using only semi-regular cycles (Definitions \ref{semiregularsmooth} and \ref{semiregularsimplex} above).
For this variant to have enumerative significance (or, more specifically, so that it admits a pushforward map to $H^*_c(\cZ/\Cpx_3)$), we must restrict to \emph{interior} semi-regular cycles (which just means cycles in the interior $\cZ^{\semiregular\circ}$ of the locus of semi-regular cycles $\cZ^\semiregular\subseteq\cZ$).
Now semi-regularity (Definition \ref{semiregularsimplex}) is compatible with pullback and local in the sense of Definition \ref{grothgpdefmodifiedeasy}, but formation of the interior is not compatible with pullback: for a family of threefolds $X\to B$ over a finite simplicial complex $B$ and a map $B'\to B$, we have an (open) inclusion
\begin{equation}\label{intsemiregpullback}
\cZ(X'/B')^{\semiregular\circ}\supseteq\cZ(X/B)^{\semiregular\circ}\times_BB'
\end{equation}
which need not be an equality (at least, we see no reason why it would always be an equality).
This means that Definition \ref{grothgpdefmodifiedeasy} is not sufficient to define a Grothendieck group of interior semi-regular 1-cycles $H^*_c(\cZ^{\semiregular\circ}/\Cpx_3)$, so it requires some generalization which we now explain.

To construct a sensible Grothendieck group of interior semi-regular 1-cycles, we turn to Lemma \ref{bivariantdoublecomplex}, which decomposed the bivariant group $H^*_c(\cZ(X/B)/B)$ into a combination of the bivariant groups $H^*_c(\cZ(X\tighttimes_B\sigma/\sigma)/\sigma)$ obtained by restricting $X\to B$ to simplices $\sigma\subseteq B$.
Now we turn around the logic and take the statement of Lemma \ref{bivariantdoublecomplex} as a \emph{definition}.
This allows for more flexibility in the sort of subsets of $\cZ$ that we can consider (compared with the setup of Definition \ref{grothgpdefmodifiedeasy}).

\begin{definition}[Modified Grothendieck group of 1-cycles $H^*_c(\cZ_\alpha/\Cpx_3)$]\label{grothgpdefmodifiedhard}
Let $\cZ_\alpha\subseteq\cZ$ be the specification of a locally closed subset $\cZ_\alpha(X/\sigma)\subseteq\cZ(X/\sigma)$ for every family of threefolds $X\to\sigma$ over a \emph{simplex}, satisfying the following properties:
\begin{itemize}
\item For every map of simplices $\sigma'\to\sigma$, the pair of locally closed subsets $\cZ_\alpha(X/\sigma)\times_\sigma\sigma'$ and $\cZ_\alpha(X\times_\sigma\sigma'/\sigma')$ (both subsets of $\cZ(X\times_\sigma\sigma'/\sigma')$) is good (in the sense of the discussion below Definition \ref{grothgpdefmodifiedeasy}), and they are equal when $\sigma'\to\sigma$ is surjective.
\item$\cZ_\alpha\subseteq\cZ$ is local, in the sense that for any open set $X^-\subseteq X\to B$, we have $\cZ_\alpha(X^-/B)=\cZ(X^-/B)\cap\cZ_\alpha(X/B)$.
\end{itemize}
\emph{Warning:} although $\cZ_\alpha(X/B)$ may be defined for finite simplicial complexes $B$, we \emph{completely ignore} these spaces and instead consider only the spaces $\cZ_\alpha(X\times_B\sigma/\sigma)$ for individual simplices $\sigma$!

Now we consult Lemma \ref{bivariantdoublecomplex} and \emph{define}
\begin{equation}\label{grothgpdefmodifiedhardeqn}
H^*_c(\cZ_\alpha(X/B)/B):=H^*_c\Bigl(\cZ(X/B),\textstyle\bigoplus\limits_{\sigma\subseteq B}\ZZ_{\cZ_\alpha(X\tighttimes_B\sigma/\sigma)}[\dim\sigma]\Bigr).
\end{equation}
where the direct sum of sheaves on the right is equipped with the differential given by the sum (with the usual orientation signs) over codimension one inclusions $\sigma'\subseteq\sigma$ of the maps $\ZZ_{\cZ_\alpha(X\tighttimes_B\sigma/\sigma)}\to\ZZ_{\cZ_\alpha(X\tighttimes_B\sigma'/\sigma')}$, arising from the fact that the pair of locally closed subsets $\cZ_\alpha(X\tighttimes_B\sigma/\sigma)$ and $\cZ_\alpha(X\tighttimes_B\sigma'/\sigma')$ of $\cZ(X/B)$ is good (which is implied by our hypothesis on $\cZ_\alpha$).
\emph{Warning:} The notation in \eqref{grothgpdefmodifiedhardeqn} is very abusive: the left hand side is \emph{not} the bivariant $H^*_c$ group of a map $\cZ_\alpha(X/B)\to B$.

We may now define $H^*_c(\cZ_\alpha/\Cpx_3):=\dircolim_{X\to B}H^*_c(\cZ_\alpha(X/B)/B)$ as usual.
We should note that this directed colimit is filtered, by following the proof of Lemma \ref{familyfiltered}, which uses the hypothesis that $\cZ_\alpha\subseteq\cZ$ is local and that we have equality $\cZ_\alpha(X/\sigma)\times_\sigma\sigma'=\cZ_\alpha(X\times_\sigma\sigma'/\sigma')$ for $\sigma'\to\sigma$ surjective, in the last step.
\end{definition}

Definition \ref{grothgpdefmodifiedhard} applies to the locus of interior semi-regular cycles $\cZ^{\semiregular\circ}\subseteq\cZ$, yielding a group $H^*_c(\cZ^{\semiregular\circ}/\Cpx_3)$.
More generally, given any system of locally closed subsets $\cZ_\alpha\subseteq\cZ$ as in Definition \ref{grothgpdefmodifiedeasy}, we may consider the subset $\cZ_\alpha^{\semiregular\circ}=(\cZ_\alpha)^{\semiregular\circ}\subseteq\cZ_\alpha$ (where interior is taken relative $\cZ_\alpha$, not relative $\cZ$!) and apply Definition \ref{grothgpdefmodifiedhard} to form a group $H^*_c(\cZ_\alpha^{\semiregular\circ}/\Cpx_3)$.
Note that $\cZ_\alpha\cap\cZ^{\semiregular\circ}$ may be a proper subset of $\cZ_\alpha^{\semiregular\circ}$ when $\cZ_\alpha\subseteq\cZ$ is not an open subset.

\subsection{Graph trick}\label{graphtricksec}

We are now working towards using generic transversality (Proposition \ref{generictransversalityfamily}) to show that the map $H^*_c(\cZ^{\semiregular\circ}/\Cpx_3)\to H^*_c(\cZ/\Cpx_3)$ is an isomorphism (Theorem \ref{grothendiecktransverse} below).
However, there appears, at first glance, to be a gap between the applicability of Proposition \ref{generictransversalityfamily} and the situation of classes in $H^*_c(\cZ/\Cpx_3)$.
Namely, any class in $H^*_c(\cZ/\Cpx_3)$ may be represented by a family $X\to B$ over a finite simplicial complex $B$ and a class in $H^*_K(\cZ(X/B)/B)$ for some compact $K\subseteq\cZ(X/B)$.
But to perturb the family $X\to B$ to achieve transversality near $K$ using Proposition \ref{generictransversalityfamily}, we require a collection of relative divisors controlling all cycles in $K$, which may be produced using Proposition \ref{divisorcovering} \emph{only under the additional hypothesis that $K\to B$ is injective}.
To bridge this gap, we introduce the `graph trick', which converts a class in $H^*_K(\cZ(X/B)/B)$ for some compact $K\subseteq\cZ(X/B)$ into a class in $H^*_{K'}(\cZ(X'/B')/B')$, representing the same element in $H^*_c(\cZ/\Cpx_3)$, for which $K'\to B'$ is injective.

Before introducing the graph trick, we need to discuss \emph{stabilization}.
The stabilization of a family $X\to B$ should be the product $X\times\RR\to B\times\RR$, and the stabilization of a class $\gamma\in H^*_c(\cZ(X/B)/B))$ should be its pushforward to $H^*_c(\cZ(X\tighttimes\RR/B\tighttimes\RR)/B\tighttimes\RR))$ under the $\times 0$ map $B\to B\times\RR$; more generally we may consider multiplication with $\RR^k$ for any $k<\infty$.
However, we wish to stay within the setting of simplicial complexes, so we will replace $B\times\RR$ with some similar operation for simplicial complexes.

To stabilize a simplicial complex $B$, we consider its join with a point.
The join $T\star T'$ of simplicial complexes $T$ and $T'$ has vertices $V(T\star T')=V(T)\sqcup V(T')$, and a subset of these vertices spans a simplex iff its intersections with $V(T)$ and $V(T')$ both span simplices (or are empty).
Now the join $B\star *$ contains (inside its geometric realization) a copy of $B\times[0,1)$ via the map $(b,t)\mapsto((1-t)b,t)$.
Iterating, there is an open embedding from $B\times[0,1)^k$ into $B\star *\star\cdots\star *$ for the join with $k$ points.
We will really care only about this open subset $B\times[0,1)^k$.

To stabilize a family $X\to B$, we would like to pull it back to $B\times[0,1)^k$ under the evident projection to $B$.
To make sense of this over the join $B\star *\star\cdots\star *$, we consider the map $(b,t_1,\ldots,t_k)\mapsto(1-t_1-\cdots-t_k)^{-1}b$, which makes sense on complexifications away from the complexified simplex $(*\star\cdots\star *)_\CC$ (a family over the complement of this simplex is also a family over the entire $B\star *\star\cdots\star *$).
This defines the stabilized family $X\times[0,1)^k\to B\times[0,1)^k\subseteq B\star *\star\cdots\star *$.

Now there is a pushforward map $H^*_c(\cZ(X/B)/B)\to H^*_c(\cZ(X\tighttimes[0,1)^k/B\tighttimes[0,1)^k)/B\tighttimes[0,1)^k)$ induced by the inclusion of simplicial complexes $B\hookrightarrow B\star *\star\cdots\star *$.

\begin{lemma}[Graph trick]\label{graphtrick}
Every class in $H^*_c(\cZ/\Cpx_3)$ (resp.\ $\ker(H^*_c(\cZ^{\semiregular\circ}/\Cpx_3)\to H^*_c(\cZ/\Cpx_3))$) is in the image of $H^*_K(\cZ(X/B)/B)$ (resp.\ $\ker(H^*_K(\cZ^{\semiregular\circ}(X/B)/B)\to H^*_K(\cZ(X/B)/B))$) for some family of threefolds $X\to B$ over a finite simplicial complex $B$ and some compact subanalytic \cite{bierstonemilman} set $K\subseteq\cZ(X/B)$ whose map to $B$ is injective.

The same holds with $\cZ_\alpha$ in place of $\cZ$, for any locally closed subset $\cZ_\alpha\subseteq\cZ$ which is compatible with pullback and local, in the sense of Definition \ref{grothgpdefmodifiedeasy}.
\end{lemma}

\begin{proof}
If we remove the requirement that $K\to B$ be injective, then this follows from elementary properties of directed colimits.
It thus suffices to begin with an element of $H^*_K(\cZ(X/B)/B)$ (resp.\ $\ker(H^*_K(\cZ^{\semiregular\circ}(X/B)/B)\to H^*_K(\cZ(X/B)/B))$) and modify it to ensure that $K\to B$ is injective, keeping its image in $H^*_c(\cZ/\Cpx_3)$ (resp.\ $\ker(H^*_c(\cZ^{\semiregular\circ}/\Cpx_3)\to H^*_c(\cZ/\Cpx_3))$) the same.

Consider the stabilization $X\times[0,1)^k\to B\times[0,1)^k$.
For any class $\gamma\in H^*_K(\cZ(X/B)/B)$, we may consider its pushforward
\begin{equation}
\gamma\times y\in H^*_{K\times y}(\cZ(X\tighttimes[0,1)^k/B\tighttimes[0,1)^k)/B\tighttimes[0,1)^k)
\end{equation}
along the $\times y$ inclusion $B\hookrightarrow B\times[0,1)^k$ for any $y\in[0,1)^k$.
The classes $\gamma$ and $\gamma\times\{0\}^k$ represent the same element of $H^*_c(\cZ/\Cpx_3)$.
On the other hand, the classes $\gamma\times y$ all agree with each other in $H^*_c$.
We will take $y=\{\frac 12\}^k$.

Now consider the map $\cZ(X/B)\times[0,1)^k\to\cZ(X/B)\times\RR^k$ given by $T_f:(z,y)\mapsto(z,y+f(z))$ for some function $f:\cZ(X/B)\to(-\frac 12,\frac 12)^k$.
Note that the image of $K\times\{\frac 12\}^k$ under this map remains a compact subset of $(0,1)^k$, so we can make sense of the pushforward
\begin{equation}
(T_f)_*(\gamma\times\{{\textstyle\frac 12}\}^k)\in H^*_{T_f(K\times\{\frac 12\}^k)}(\cZ(X\tighttimes[0,1)^k/B\tighttimes[0,1)^k)/B\tighttimes[0,1)^k).
\end{equation}
This pushforward coincides with $\gamma\times\{{\textstyle\frac 12}\}^k$ in $H^*_c$, by a homotopy argument, so it represents the same class as $\gamma$ in $H^*_c(\cZ/\Cpx_3)$.
On the other hand, it is supported over the graph of $\frac 12+f|_K$, which maps injectively to $[0,1)^k$ provided we take $f:\cZ(X/B)\to(-\frac 12,\frac 12)^k$ to restrict to an embedding over $K$ (we can also make $f$ subanalytic so that this graph is subanalytic).

Now if instead $\gamma\in\ker(H^*_K(\cZ^{\semiregular\circ}(X/B)/B)\to H^*_K(\cZ(X/B)/B))$, we may proceed in the same way and see that 
\begin{multline}
(T_f)_*(\gamma\times\{{\textstyle\frac 12}\}^k)\in\ker\Bigl(H^*_{T_f(K\times\{\frac 12\}^k)}(\cZ^{\semiregular\circ}(X\tighttimes[0,1)^k/B\tighttimes[0,1)^k)/B\tighttimes[0,1)^k)\\
H^*_{T_f(K\times\{\frac 12\}^k)}(\cZ(X\tighttimes[0,1)^k/B\tighttimes[0,1)^k)/B\tighttimes[0,1)^k)\Bigr)
\end{multline}
represents the same element in $\ker(H^*_c(\cZ^{\semiregular\circ}/\Cpx_3)\to H^*_c(\cZ/\Cpx_3))$ as $\gamma$.

The same reasoning applies to any $\cZ_\alpha\subseteq\cZ$.
\end{proof}

It may be worthwhile to formulate the graph trick as the statement that the map from a suitably defined Grothendieck group $H^*_{c,\inj}(\cZ_\alpha/\Cpx_3)$ (a directed colimit of $H^*_K(\cZ_\alpha(X/B)/B)$ over families $X\to B$ and subsets $K\subseteq\cZ(X/B)$ injecting into $B$) to $H^*_c(\cZ_\alpha/\Cpx_3)$ is an isomorphism, for any $\cZ_\alpha\subseteq\cZ$ as in Definition \ref{grothgpdefmodifiedhard}.
We will not pursue this here.

\subsection{Generic transversality and the Grothendieck group}

We now use generic transversality to argue that the Grothendieck group of interior semi-regular 1-cycles $H^*_c(\cZ^{\semiregular\circ}/\Cpx_3)$ coincides with that of all 1-cycles $H^*_c(\cZ/\Cpx_3)$.
The argument will be somewhat technical, but the rough idea is as follows.
According to the graph trick (Lemma \ref{graphtrick}), we may represent classes in the Grothendieck group of 1-cycles by bivariant classes $\gamma\in H_K^*(\cZ(X/B)/B)$ supported over compact subsets $K\subseteq\cZ(X/B)\to B$ mapping injectively to $B$.
Since $K\to B$ is injective, we can control $K$ with divisors (Proposition \ref{divisorcovering}), which allows us to apply generic transversality (Proposition \ref{generictransversalityfamily}) to construct a perturbation of the family $X\to B$ making a neighborhood of $K$ semi-regular.
We may then `push' our class into the interior semi-regular locus.

\begin{theorem}\label{grothendiecktransverse}
The map $H^*_c(\cZ^{\semiregular\circ}/\Cpx_3)\to H^*_c(\cZ/\Cpx_3)$ is an isomorphism.
The same holds with $\cZ_\alpha$ in place of $\cZ$, for any any locally closed subset $\cZ_\alpha\subseteq\cZ$ which is compatible with pullback and local, in the sense of Definition \ref{grothgpdefmodifiedeasy}.
\end{theorem}

\begin{proof}[Proof of surjectivity]
Fix a class in $H^*_c(\cZ/\Cpx_3)$, and let us show it is in the image of $H^*_c(\cZ^{\semiregular\circ}/\Cpx_3)$.
Represent this class in $H^*_c(\cZ/\Cpx_3)$ by a class $\gamma\in H^*_K(\cZ(X/B)/B)$ for some family of threefolds $X\to B$ over a finite simplicial complex $B$ and some compact $K\subseteq\cZ(X/B)$.
According to the graph trick (Lemma \ref{graphtrick}), we may assume wlog that $K$ is subanalytic and that its map to $B$ is injective.

Now let us argue that we can take $B$ to be a homology manifold (dualizing sheaf is locally $\cong\ZZ[d]$ for some $d\in\ZZ$) over a neighborhood of the image of $K$ (this step is probably unnecessary, but will make things easier later).
To achieve this, first choose an embedding $B\hookrightarrow\hat B$ into a simplicial complex $\hat B$ which is a homology manifold over a neighborhood of the image of $B$.
For example, we could take $\hat B$ to be a triangulation of $[-1,2]^{V(B)}$ which contains (some subdivision of) $B\subseteq\RR^{V(B)}$ as a subcomplex (note that subdividing $B$ is not a problem by Lemma \ref{subdivisiongrothendieck}).
Now there is a local retraction $\hat B\to B$ (on complexifications), and pulling back $X\to B$ along it defines a family $\hat X\to\hat B$ whose restriction to $B$ is $X\to B$.
Now we may push forward our class in $H^*_K(\cZ(X/B)/B)$ to $H^*_K(\cZ(\hat X/\hat B)/\hat B)$ to achieve the desired result.

Now we apply `enough divisors' (Proposition \ref{divisorcovering}, which applies since $K$ is subanalytic and $K\to B$ is injective) to trim $X$ away from $K$ (which leaves the image of $\gamma\in H^*_c(\cZ(X/B)/B)$ in the Grothendieck group unchanged) and thereafter fix disjoint relative divisors $D_i\subseteq X\times_BW_i\to W_i$ for open sets $W_i\subseteq B$ which together control $K$.

Now we wish to apply generic transversality (Lemma \ref{generictransversalityfamily}, or rather its generalization to multiple divisors) to the family of controlling divisors $D_i\subseteq X\times_BW_i\to W_i$.
If $V_i\subseteq W_i$ denotes the largest subcomplex of $B$ contained in $W_i$, and $U_i\subseteq V_i$ denotes the largest constructible open set (i.e.\ the complement of a subcomplex) contained in $V_i$, then Lemma \ref{generictransversalityfamily} would guarantee that a generic deformation of $X\to B$ using $D_i$ over $U_i$ is $\bigsqcup_i(D_i\times_{W_i}U_i)$-regular.
To ensure that $\bigsqcup_i(D_i\times_{W_i}U_i)$ continues to control $K$ (at the moment, we only know that $\bigsqcup_iD_i$ controls $K$), it suffices to replace $B$ with a sufficiently fine subdivision thereof (which is ok by Lemma \ref{subdivisiongrothendieck}).

Now we actually apply Lemma \ref{generictransversalityfamily} not to $X\to B$ itself but to the stabilization $X\times\RR_{\geq 0}\to B\times\RR_{\geq 0}$ (see Section \ref{graphtricksec}) and the constructible open sets $U_i\times\RR_{>0}$.
This produces for us a collection $\Phi$ of simplex-wise analytic sections $\varphi_i:B\times\RR_{\geq 0}\to\cJ_{D_i\times\RR_{\geq 0}}(X\tighttimes\RR_{\geq 0}/B\tighttimes\RR_{\geq 0})=\cJ_{D_i}(X/B)\times\RR_{\geq 0}$ which are nonzero only on $U_i\times\RR_{>0}$, for which the resulting reglued family $(X\times\RR_{\geq 0})_\Phi\to B\times\RR_{\geq 0}$ is $(\bigsqcup_i(D_i\times_{W_i}U_i)\times\RR_{>0})$-regular.

Now we wish to `push' our class in $H^*_K(\cZ(X/B)/B)$ into the interior semi-regular locus of (the relative cycle space of) the reguled family $(X\times\RR_{\geq 0})_\Phi\to B\times\RR_{\geq 0}$.
This is in some sense trivial, yet when written out seems complicated.
First choose a local retraction $r:\cZ((X\tighttimes\RR_{\geq 0})_\Phi/(B\tighttimes\RR_{\geq 0}))\to\cZ(X/B)$ defined in a neighborhood of the compact set $K\subseteq\cZ(X/B)$.
\begin{equation}
\tikzset{ccc/.initial=\textwidth/15}
\begin{tikzpicture}[x=\pgfkeysvalueof{/tikz/ccc},y=\pgfkeysvalueof{/tikz/ccc},baseline={([yshift=-.8ex]current bounding box.center)}]
\draw[thick](0,0)--(10,0);\node[anchor=east]at(-0.2,0){$B\times\RR_{\geq 0}$};\draw[fill](0,0)circle(0.1);
\draw(0,1)--(0,5);\node[anchor=west]at(6.5,3.5){$\cZ((X\tighttimes\RR_{\geq 0})_\Phi/(B\tighttimes\RR_{\geq 0}))$};
\draw(0,1)to[out=0,in=180](3,1.3)to[out=0,in=180](5,0.5)to[out=0,in=180](8,2)to[out=0,in=180](10,1);
\draw(0,5)to[out=0,in=180](2,4.5)to[out=0,in=180](6,6)to[out=0,in=180](7,5)to[out=0,in=180](10,5);
\draw[ultra thick](0,2)--(0,4);\node[anchor=east]at(-0.1,3){$K$};
\draw[dashed](0,1.5)to[out=0,in=270](2,3)to[out=90,in=0](0,4.5);\node[anchor=west]at(2,2.5){domain of $r$};
\begin{scope}
\clip(0,1.5)to[out=0,in=270](2,3)to[out=90,in=0](0,4.5)--(0,1.5);
\filldraw[opacity=.15](0,2)--(0,4)to[out=0,in=180](.5,4.1)to[out=0,in=180](1,3.8)to[out=0,in=180](1.5,4.2)to[out=0,in=180](2,4)--(2,2)to[out=180,in=0](1.5,1.8)to[out=180,in=0](1,1.9)to[out=180,in=0](.5,2.1)to[out=180,in=0](0,2);
\end{scope}
\draw(1,3)--(2.5,3.5);\node[anchor=west]at(2.5,3.5){$r^{-1}(K)$};
\end{tikzpicture}
\end{equation}
More precisely, we suppose that $r$ is defined on an open subset of $\cZ((X\tighttimes\RR_{\geq 0})_\Phi/(B\tighttimes\RR_{\geq 0}))$ containing $K$, and we note that by shrinking said open set, we may assume wlog that the inverse image $r^{-1}(K)$ is proper over $[0,\varepsilon]$ for some $\varepsilon>0$.
We now consider the following diagram of spaces
\begin{equation}
\begin{tikzcd}
\cZ(X/B)\ar[d]\ar[r]&\cZ((X\tighttimes\RR_{\geq 0})_\Phi/(B\tighttimes\RR_{\geq 0}))\ar[r,"r\times p_{\RR_{\geq 0}}"]\ar[d]&\cZ(X/B)\times\RR_{\geq 0}\ar[d]\\
B\ar[r,"\times 0"]&B\times\RR_{\geq 0}\ar[r,equals]&B\times\RR_{\geq 0}
\end{tikzcd}
\end{equation}
and the induced diagram on bivariant $H^*_c$ groups
\begin{equation}\label{grothendiecktransversemovingclassdiagram}
\begin{tikzcd}[column sep=tiny]
H^*_K(\cZ(X/B)/B)\ar[r]\ar[rd]&H^*_{K\tighttimes[0,\varepsilon]}((\cZ(X/B)\tighttimes\RR_{\geq 0})/(B\tighttimes\RR_{\geq 0}))\ar[d,"{(r\times p_{\RR_{\geq 0}})^*}"]\\
{}&H^*_{r^{-1}(K)\tighttimes p_{\RR_{\geq 0}}^{-1}([0,\varepsilon])}(\cZ((X\tighttimes\RR_{\geq 0})_\Phi/(B\tighttimes\RR_{\geq 0}))/(B\tighttimes\RR_{\geq 0}))
\end{tikzcd}
\end{equation}
in which the horizontal and diagonal arrows are \emph{pushforward} along $(\times 0):B\to B\times\RR_{\geq 0}$ and in which the vertical arrow is \emph{pullback} along $r\times p_{\RR_{\geq 0}}$.
This pullback requires identifying the sheaves $\omega_{B\times\RR_{\geq 0}}=\omega_B\otimes\omega_{\RR_{\geq 0}}$ and $(r\times p_{\RR_{\geq 0}})^*\omega_{B\times\RR_{\geq 0}}=r^*\omega_B\otimes\omega_{\RR_{\geq 0}}$ on $\cZ((X\tighttimes\RR_{\geq 0})_\Phi/(B\tighttimes\RR_{\geq 0}))$ over a neighborhood of $r^{-1}(K)\tighttimes p_{\RR_{\geq 0}}^{-1}([0,\varepsilon])$: such an identification comes from the fact that $B$ is a homology manifold (over a neighborhood of the image of $K$) and from shrinking $\varepsilon>0$.
Now observe that the top horizontal map may be described as the pushforward along $(\times t):B\to B\times\RR_{\geq 0}$ for \emph{any} $t\in[0,\varepsilon]$!
We conclude that the class of $\gamma\in H^*_K(\cZ(X/B)/B)$ in the Grothendieck group of 1-cycles coincides with the class of
\begin{equation}
(r\tighttimes p_{\RR_{\geq 0}})^*(\gamma\times t)\in H^*_{r^{-1}(K){\cap}p_{\RR_{\geq 0}}^{-1}(t)}(\cZ((X\tighttimes\RR_{\geq 0})_\Phi/(B\tighttimes\RR_{\geq 0}))/(B\tighttimes\RR_{\geq 0})).
\end{equation}
Now we claim that the support $r^{-1}(K){\cap}p_{\RR_{\geq 0}}^{-1}(t)\subseteq\cZ((X\tighttimes\RR_{\geq 0})_\Phi/(B\tighttimes\RR_{\geq 0}))$ of this class lies within the interior semi-regular locus.
This locus lies within an arbitrarily small neighborhood of $K$, hence is $(\bigsqcup_i(D_i\times_{W_i}U_i)\times\RR_{>0})$-controlled (since being controlled is an open condition on 1-cycles).
Now the locus of $(\bigsqcup_i(D_i\times_{W_i}U_i)\times\RR_{>0})$-controlled 1-cycles is semi-regular and open, hence interior semi-regular.
We have thus shown that our given arbitrary class of $H^*_c(\cZ/\Cpx_3)$ lies in the image of $H^*_c(\cZ^{\semiregular\circ}/\Cpx_3)\to H^*_c(\cZ/\Cpx_3)$, as desired.
\end{proof}

\begin{proof}[Proof of injectivity]
To prove injectivity of the map $H^*_c(\cZ^{\semiregular\circ}/\Cpx_3)\to H^*_c(\cZ/\Cpx_3)$, we just need a relative version of the argument used for surjectivity.
Represent an arbitrary class in $\ker(H^*_c(\cZ^{\semiregular\circ}/\Cpx_3)\to H^*_c(\cZ/\Cpx_3))$ by a class in $\ker(H^*_K(\cZ^{\semiregular\circ}(X/B)/B)\to H^*_K(\cZ(X/B)/B))$ for some family of threefolds $X\to B$ over a finite simplicial complex $B$ and some compact $K\subseteq\cZ(X/B)$.
Our goal is to show this class maps to zero in $H^*_c(\cZ^{\semiregular\circ}/\Cpx_3)$.
According to the graph trick (Lemma \ref{graphtrick}), we may assume that $K$ is subanalytic and that its map to $B$ is injective.

As in the proof of surjectivity, we will need to subdivide $B$ at various points in the argument.
This presents an additional difficulty in the present context, as the notion of interior semi-regularity is not preserved under subdivision (compare Section \ref{interiorsemiregulargrothendieck}).
By replacing the standard map (on geometric realizations) $bB\to B$ by a generic perturbation thereof (say, send each vertex of $bB$ to any interior point of the corresponding simplex of $B$, not necessarily its barycenter, and extend linearly), we can ensure that every semi-regular 1-cycle over $B$ remains semi-regular over $bB$ (the difference being that semi-regularity over $bB$ involves directions in $bB$ which are tangent to the simplex of $bB$ we are in, which may be a proper subspace of the tangent space of the simplex of $B$ we are in).
Now the \emph{interior} semi-regular locus over $bB$ may be \emph{strictly larger} than that over $B$, but it does not matter: there is still a pushforward map from $H^*_c(\cZ^{\semiregular\circ}(X/B)/B)$ to $H^*_c(\cZ^{\semiregular\circ}(bX/bB)/bB)$, which is all we need.
This pushforward map is compatible with the map from both the domain and codomain to $H^*_c(\cZ^{\semiregular\circ}/\Cpx_3)$, by the usual concordance argument from Lemma \ref{subdivisiongrothendieck}.
This means we are allowed to replace $B$ with a very fine subdivision thereof.

Now we continue to follow the argument used for surjectivity.
For the same reasons as before, we may assume wlog that $B$ is a homology manifold over a neighborhood of the image of $K$.
We may apply `enough divisors' to produce disjoint relative divisors $D_i\subseteq X\times_BW_i\to W_i$ which together control $K$.
We may also apply generic transversality (after subdividing $B$) to produce a collection $\Phi$ of analytic sections $\varphi_i$ nonzero on $U_i\times\RR_{>0}$ for which the reglued family $(X\times\RR_{\geq 0})_\Phi\to B\times\RR_{\geq 0}$ is $(\bigsqcup_i(D_i\times_{W_i}U_i)\times\RR_{>0})$-regular.

Now it remains to push our situation in $\cZ(X/B)\to B$ into the interior semi-regular locus of (the relative cycle space of) the reguled family $(X\times\RR_{\geq 0})_\Phi\to B\times\RR_{\geq 0}$.
We draw the same diagram \eqref{grothendiecktransversemovingclassdiagram}, and we conclude that our class $\gamma\in\ker(H^*_K(\cZ^{\semiregular\circ}(X/B)/B)\to H^*_K(\cZ(X/B)/B))$ is equivalent in $H^*_c(\cZ^{\semiregular\circ}/\Cpx_3)$ to its image
\begin{multline}
(r\tighttimes p_{\RR_{\geq 0}})^*(\gamma\times t)\\\in\ker\Bigl(H^*_{r^{-1}(K){\cap}p_{\RR_{\geq 0}}^{-1}(t)}(\cZ^{\semiregular\circ}((X\tighttimes\RR_{\geq 0})_\Phi/(B\tighttimes\RR_{\geq 0}))/(B\tighttimes\RR_{\geq 0}))\\\to H^*_{r^{-1}(K){\cap}p_{\RR_{\geq 0}}^{-1}(t)}(\cZ((X\tighttimes\RR_{\geq 0})_\Phi/(B\tighttimes\RR_{\geq 0}))/(B\tighttimes\RR_{\geq 0}))\Bigr).
\end{multline}
But this map (whose kernel we are taking) is now an isomorphism since, as argued above, the support $r^{-1}(K){\cap}p_{\RR_{\geq 0}}^{-1}(t)$ lies in the interior semi-regular locus.
\end{proof}

\section{Generation by local curves}

We now study the Grothendieck group of interior semi-regular semi-Fano 1-cycles $H^*_c(\cZ_\semiFano^{\semiregular\circ}/\Cpx_3)$ from Section \ref{interiorsemiregulargrothendieck}.
Interior semi-regularity along with semi-Fano together imply that the dimension of this space of cycles is bounded above by its virtual dimension (Lemma \ref{semiregulardimension}).
This bound implies $H^*_c(\cZ_\semiFano^{\semiregular\circ}/\Cpx_3)$ is supported in virtual dimension $\geq 0$ and in virtual dimension zero is generated by Poincaré duals of smooth points.
We then use the multiplicity filtration to analyze these generators and prove Theorem \ref{mainsurjectivity}, which is the `generation' part of Theorem \ref{maincalculation}.
Generic transversality in the form of Theorem \ref{grothendiecktransverse} is crucial (but is applied as a black box).

\subsection{Semi-charts}\label{semichart}

Around each point $z\in\cZ(X)$ is a \emph{semi-chart} defined as follows.

\begin{definition}[Semi-chart]\label{semichartdef}
Let $z=\sum_im_iC_i\in\cZ(X)$ be a point.
Denote by $\tilde C_i\to C_i$ the normalization of $C_i$, so each $\tilde C_i$ is a compact smooth curve.
We consider all local deformations of $\tilde C=\bigsqcup_i\tilde C_i\to X$ (including deformations of the complex structure on the domain), which defines for us a germ of complex analytic space $(S_z,z)$ (note that this is precisely the deformation theory considered for the notion of semi-regularity Definition \ref{semiregularsmooth}).
There is now a map $(S_z,z)\to(\cZ(X),z)$ sending $\tilde C'=\bigsqcup_i\tilde C_i'\to X$ the cycle $\sum_im_iC_i'$.
We call this the \emph{semi-chart} at $z$.
\end{definition}

The prefix `semi-' is meant to reflect the fact that semi-charts need not be (germs of) open embeddings, since they does not take into account the possibility of the topology changing (as in $xy=t$ or $y^2=x(x-t)(x+t)$ near $t=0$) or of curves with multiplicities breaking apart (as in $y^2=tx$ near $t=0$).
By `open embedding' we just mean `an isomorphism onto an open subset' (topological isomorphism is enough).

It turns out that the semi-charts are open embeddings not into $\cZ$ but rather into the strata of a certain stratification of $\cZ$, as we now explain.
Recall the multiplicity map $\cZ\to\bM$ from Definition \ref{multfilt} and that it is (analytically) constructible and upper semi-continuous (sub-level sets $\cZ(X)_{\leq\bm}$ are open).
Its level sets $\cZ(X)_{=\bm}$ thus form a stratification of $\cZ(X)$ by locally closed analytic subsets.
Now let $\tilde\chi:\cZ(X)\to\ZZ$ denote the `Euler characteristic of the normalization' function sending $z=\sum_im_iC_i$ to $\sum_i\chi(\tilde C_i)$, which is analytically constructible since the universal family $\cU(X)\subseteq X\times\cZ(X)$ is proper over $\cZ(X)$.

\begin{lemma}[Semi-chart stratification]\label{semichartopen}
The function $\tilde\chi:\cZ(X)\to\ZZ$ is upper semi-continuous on every multiplicity stratum $\cZ(X)_{=\bm}$ (meaning sub-level sets $\cZ(X)_{=\bm,\tilde\chi\leq a}$ are open).
The semi-chart associated to any point of $\cZ(X)$ is an open embedding into the locally closed stratum $\cZ(X)_{=\bm,\tilde\chi=a}$ containing it.
\end{lemma}

\begin{proof}
The stratum $\cZ(X)_{=\bm}$ is, near a point $z=\sum_im_iC_i$, the set of cycles $\sum_im_iC_i'$ for $C_i'$ close to $C_i$.
The germ $(\cZ(X)_\bm,z)$ is thus equivalent to a product of germs $(\cZ(X),C_i)$.
The function $\tilde\chi$ on $(\cZ(X)_\bm,z)$ is the sum of functions $\tilde\chi$ on each of the factors $(\cZ(X),C_i)$.
The semi-chart of $z$ (which stays within $\cZ(X)_\bm$) is also the product of the semi-charts of each $C_i$.
It thus suffices to prove the result for each of these factors $(\cZ(X),C_i)$ individually.

So, let us consider the behavior of $\tilde\chi$ in a neighborhood of a 1-cycle consisting of a single (irreducible) curve $C$ (taken with multiplicity one).
A nearby curve $C'$ is, locally near smooth points of $C$, simply a nearby smooth curve, hence may be (non-canonically) identified with $C$ (as smooth manifolds) with nearby complex structure.
Near a singular point of $C$ (necessarily isolated), choose a ball $B$ around it so that $\tilde C\cap B$ is a disjoint union of disks.
Now a disjoint union of disks is the unique filling of a disjoint union of circles of maximal Euler characteristic, so we see that $\chi(\tilde C')\leq\chi(\tilde C)$, with equality iff $\tilde C'\cap B$ is also a disjoint union of disks, in which case we see that $\tilde C'\to X$ is a small perturbation of $\tilde C\to X$.
Thus we have shown that $\tilde\chi$ is upper semi-continuous and that the semi-chart at $C$ is an open embedding into the level set $\tilde\chi^{-1}(\tilde\chi(C))$.
\end{proof}

The above discussion applies without change in the relative setting of $\cZ(X/B)$.

\subsection{Vanishing in negative virtual dimension}

The virtue of interior semi-regularity in combination with semi-Fano is that it implies that the space of 1-cycles is bounded above by its virtual dimension:

\begin{lemma}\label{semiregulardimension}
We have $\dim_z\cZ_\semiFano^{\semiregular\circ}(X/B)\leq\dim B+2k$ for any point $z$ of chern number $k$, for any real analytic manifold $B$.
\end{lemma}

\begin{proof}
In view of the semi-chart stratification (Lemma \ref{semichartopen}, which remains valid over $B$), it suffices to bound the dimension of the semi-chart at every point $z\in\cZ(X/B)_\semiFano^{\semiregular\circ}$.
When $z=\sum_im_iC_i$ is semi-regular, its semi-chart has dimension $\dim B+2\sum_ik_i$ where $k_i=\langle c_1(T_{X/B}),C_i\rangle$.
When $z$ is semi-Fano (that is $k_i\geq 0$ for all $i$), this is bounded above by $\dim B+2\sum_im_ik_i=\dim B+2k$ for $k=\langle c_1(T_{X/B}),z\rangle$.
\end{proof}

This result immediately implies vanishing in negative virtual dimension:

\begin{proposition}\label{vdimpositive}
The group $H^*_c(\cZ_\semiFano^{\semiregular\circ}/\Cpx_3)$ is supported in virtual dimension $\geq 0$.
The same holds for any locally closed $\cZ_\alpha\subseteq\cZ_\semiFano$ in place of $\cZ_\semiFano$ (compatible with pullback and local in the sense of Definition \ref{grothgpdefmodifiedeasy}).
\end{proposition}

\begin{proof}
It suffices to prove that $H^*_c(\cZ_\semiFano^{\semiregular\circ}(X/B)/B)$ is supported in virtual dimension $\leq 0$ for every family of complex threefolds $X\to B$ over a finite simplicial complex $B$.
This group is a finite iterated extension of the groups $H^{*+\dim\sigma}_c(\cZ_\semiFano^{\semiregular\circ}(X\tighttimes_B\sigma/\sigma))$ for simplices $\sigma\subseteq B$.
Now each of these groups $H^{*+\dim\sigma}_c(\cZ_\semiFano^{\semiregular\circ}(X\tighttimes_B\sigma/\sigma))$ is supported in cohomological degree $i$ at most twice the chern number $2k$ by Lemma \ref{semiregulardimension}, which gives virtual dimension $2k-i\geq 0$.

The same applies to $\cZ_\alpha\subseteq\cZ_\semiFano$ (the dimension bound on $\cZ_\semiFano$ evidently passes to any subset $\cZ_\alpha\subseteq\cZ_\semiFano$).
\end{proof}

\subsection{Geometric local curve elements}\label{geometricgeneration}

It is natural to continue the reasoning in Proposition \ref{vdimpositive} and try to understand the classes in virtual dimension zero.
They will correspond to the Poincaré duals of smooth points in the space of semi-Fano and interior semi-regular 1-cycles where the dimension upper bound in Lemma \ref{semiregulardimension} is an equality.

\begin{definition}[Topological type]\label{toptypedef}
The \emph{topological type} of a 1-cycle $z=\sum_im_iC_i\in\cZ(X)$ is the multi-set of tuples $(g_i,m_i,k_i)$ consisting of the genus $g_i$ of $\tilde C_i$, the multiplicity $m_i\geq 1$, and the chern number $k_i=\langle c_1(T_{X/B}),C_i\rangle$; this is constant over any semi-chart.
\end{definition}

\begin{definition}[Geometric local curve element]\label{geolocalelts}
Let $X\to\sigma$ be a family of threefolds over a simplex.
Let $z=\sum_im_iC_i\in\cZ_\semiFano(X/\sigma)$ be a 1-cycle lying over the interior $\sigma^\circ$ for which $\bigsqcup_iC_i\subseteq X$ is smooth and unobstructed (including variations in the base) and whose semi-chart is an open embedding into $\cZ_\semiFano(X/\sigma)$.
The Poincaré dual of the point $z$ in this semi-chart thus defines an element of $H^*_c(\cZ_\semiFano^{\semiregular\circ}/\Cpx_3)$.
This element has cohomological degree $\sum_ik_i$ and chern number $\sum_im_ik_i$, hence has virtual dimension $\sum_i(m_i-1)k_i$.
When this virtual dimension is zero (that is, when $(m_i-1)k_i=0$ for all $i$), we call this Poincaré dual a \emph{geometric local curve element} (there is a sign ambiguity since we have not discussed orientations, but we will not worry about it).
Note that the topological type of a geometric local curve element is, by definition, semi-Fano ($k_i\geq 0$ for all $i$) and \emph{non-deficient} ($(m_i-1)k_i=0$ for all $i$).

More generally, the same definition applies to any locally closed $\cZ_\alpha\subseteq\cZ_\semiFano$ as in Definition \ref{grothgpdefmodifiedeasy} which is `closed under semi-charts' (the semi-chart of any $z\in\cZ_\alpha$ is contained in $\cZ_\alpha$).
\end{definition}

A product of geometric local curve elements is evidently again a geometric local curve element.

\begin{lemma}\label{geolocaleltsexist}
Geometric local curve elements exist in all (semi-Fano, non-deficient) topological types.
\end{lemma}

\begin{proof}
It suffices to deal with connected topological types $(g,m,k)$, since we can then get all topological types by taking products.

Begin with a local curve $E\to C$ where $H^1(C,E)=0$ (for example, we can take $E$ to be the direct sum of generic line bundles of degree $g-1$ and $g-1+k$).
If $m=1$, then we note that $\cZ(E,m)=\cZ(E,1)=H^0(C,E)$ is unobstructed, hence interior semi-regular, and the Poincaré dual of any point is the desired geometric local curve element.

If $m>1$ (so $k=0$), then $\cZ(E,m)_{=(m)}=H^0(C,E)$ remains unobstructed, but $\cZ(E,m)_{\leq(m)}$ may be more complicated and perhaps not semi-regular.
So, we will use generic transversality to find a suitable transverse perturbation of it.
Let $D\subseteq E$ be the fiber over some point of $C$.
According to Proposition \ref{generictransversality}, there exists a $D$-deformation $E_\varphi$ of a neighborhood of the zero section $C\subseteq E$ which is $D$-regular (all $D$-controlled simple maps are regular, hence all $D$-controlled 1-cycles are semi-regular, hence interior semi-regular since being $D$-controlled is an open condition).
Now the family of sections $\cZ(E,1)=H^0(C,E)$ is cut out transversally, hence for sufficiently small deformations $\varphi$, there exists a smooth unobstructed curve $C'\subseteq E_\varphi$ nearby the zero section $C\subseteq E$.
Now the cycle $mC'\in\cZ^{\semiregular\circ}(E_\varphi,m)$ is smooth and semi-regular.
Its semi-chart is an open embedding over a dense open set for dimension reasons (since $k=0$, Lemma \ref{semiregulardimension} says that the dimension of $\cZ^{\semiregular\circ}(E_\varphi,m)$ does not exceed the dimension of this semi-chart), so we can take the Poincaré dual of a generic point of it to obtain the desired geometric local curve element.

The basic point of this argument, in the harder case $m>1$, was just to show that there exists \emph{some} threefold $X$ containing an \emph{interior} semi-regular smooth cycle $mC'$ of chern number zero (and any given genus).
We used the soft but inexplicit Proposition \ref{generictransversality} to produce an example of such.
It is certainly conceivable one could come up with an explicit construction (though we do not know how).
\end{proof}

\begin{remark}\label{geolocaleltsnottopological}
Geometric local curve elements in $H^*_c(\cZ_\semiFano^{\semiregular\circ}/\Cpx_3)$ \emph{do not} depend only on their topological type.
The variation of the geometric local curve elements in a given topological type is studied in Ionel--Parker \cite[Section 7]{ionelparker} and Bai--Swaminathan \cite{baiswaminathan}.
These references suggest that one might be able to define a \emph{canonical} (though not unique) geometric local curve element of a given (semi-Fano, non-deficient) topological type by considering a super-rigid complex structure (if such were to exist, compare Wendl \cite{wendl}).
We will satisfy ourselves with proving the basic `invariance modulo lower multiplicity' result in Proposition \ref{reducedtoptype} below.
\end{remark}

Now let us strengthen Proposition \ref{vdimpositive} in the evident way:

\begin{proposition}\label{vdimpositivebetter}
The group $H^*_c(\cZ_\semiFano^{\semiregular\circ}/\Cpx_3)$ is supported in virtual dimension $\geq 0$, and in virtual dimension zero it is generated by geometric local curve elements.
The same holds for any locally closed $\cZ_\alpha\subseteq\cZ_\semiFano$ as in Definition \ref{geolocalelts} in place of $\cZ_\semiFano$.
\end{proposition}

\begin{proof}
It suffices to prove the assertion for $H^*_c(\cZ_\semiFano^{\semiregular\circ}(X/B)/B)$ for every family of complex threefolds $X\to B$ over a finite simplicial complex $B$.
This group is a finite iterated extension of the groups $H^{*+\dim\sigma}_c(\cZ_\semiFano^{\semiregular\circ}(X\tighttimes_B\sigma/\sigma))$ for simplices $\sigma\subseteq B$, so it suffices to show the result for these.
Now each of these groups $H^{*+\dim\sigma}_c(\cZ_\semiFano^{\semiregular\circ}(X\tighttimes_B\sigma/\sigma))$ is supported in cohomological degree $i$ at most twice the chern number $2k$ by Lemma \ref{semiregulardimension}, which gives virtual dimension $2k-i\geq 0$.
Virtual dimension zero classes are thus generated by Poincaré duals of smooth points of $\cZ_\semiFano^{\semiregular\circ}(X\tighttimes_B\sigma/\sigma)$ where the dimension bound in Lemma \ref{semiregulardimension} is an equality.
According to the semi-chart stratification (Lemma \ref{semichartopen}), it is enough to consider Poincaré duals of smooth points whose semi-chart is an open embedding.
By the definition of regularity (Definition \ref{regulardef}), such a semi-chart contains smooth curves.
The Poincaré dual of such a point is now a geometric local curve element.

The same reasoning applies to $\cZ_\alpha\subseteq\cZ_\semiFano$ as in Definition \ref{geolocalelts} (note that such $\cZ_\alpha$ is, by hypothesis, a union of semi-chart strata).
\end{proof}

\subsection{Multiplicity filtration}

The defect of Proposition \ref{vdimpositivebetter} is that the generating set it produces (geometric local curve elements) is quite redundant.
We now use the multiplicity filtration to argue that we actually only need one geometric local curve element of every topological type.
The basic point is that geometric local curve elements depend only on their topological type, when viewed modulo `lower multiplicity' 1-cycles (compare Remark \ref{geolocaleltsnottopological}).
Recall the multiplicity filtration from Definition \ref{multfilt}.

\begin{proposition}\label{reducedtoptype}
Geometric local curve elements in $H^*_c((\cZ_{\semiFano,=\bm})^{\semiregular\circ}/\Cpx_3)$ depend only on their topological type.
\end{proposition}

Before embarking on the proof, we emphasize the key difference between $\cZ_{\semiFano,=\bm}$ and $\cZ_{\semiFano,\leq\bm}$.
For any smooth cycle $z=\sum_im_iC_i\in\cZ_{\semiFano,=\bm}$ (meaning $\bigsqcup_iC_i\subseteq X$ is smooth), the semi-chart of $z$ is an open embedding into $\cZ_{\semiFano,=\bm}$.
That is, nearby points $z'\in\cZ_{\semiFano,=\bm}$ all have the form $\sum_im_iC_i'$ for smooth curves $C_i'$ near $C_i$.
In particular, if $z$ is semi-regular then it is interior semi-regular.
By contrast, the space $\cZ_{\semiFano,\leq\bm}$ could \emph{a priori} contain cycles close to $z$ where each $C_i$ splits apart in quite a complicated fashion.
This is the essential difference between $\cZ_{\semiFano,=\bm}$ and $\cZ_{\semiFano,\leq\bm}$ which is responsible for the geometric local curve elements being unique (in their topological type) in $H^*_c((\cZ_{\semiFano,=\bm})^{\semiregular\circ}/\Cpx_3)$ but not in $H^*_c((\cZ_{\semiFano,\leq\bm})^{\semiregular\circ}/\Cpx_3)$.

\begin{proof}
The proof has a few different steps, but all are quite `soft'.

Given a family $X\to B$ over a real analytic manifold $B$ and a smooth cycle $z=\sum_im_iC_i\in\cZ(X/B)$ which is semi-regular, we may produce a geometric local curve element in $H^*_c((\cZ_{\semiFano,=\bm})^{\semiregular\circ}/\Cpx_3)$ by choosing a simplex $\sigma\to B$ (of the same dimension) whose interior contains the fiber containing $z$ (evidently every geometric curve element is of this form).
The resulting geometric local curve element is independent of $\sigma$ by a concordance argument.
Moving $z$ within the class of semi-regular smooth cycles also leaves this geometric local curve element invariant.

Now there is another operation we may consider: cutting down the family $X\to B$ to a submanifold $B'\subseteq B$ containing the fiber containing $z$, such that $z$ remains semi-regular over $B'$.
To see that this also leaves the associated geometric local curve element invariant, consider a simplex $\sigma'\to B'$ (with the fiber containing $z$ in its interior) and extend it to a simplex $\sigma\to B$ where $\sigma'\subseteq\sigma$ is a face (arbitrary codimension).
Now $\cZ(X/B)_{\semiFano,=\bm}$ is, near $z$, a smooth manifold mapping to $B$ transversely to $B'$.
It follows that the pushforward of the Poincaré dual of $z\in\cZ(X'/B')_{\semiFano,=\bm}$ in $H^*_c(\cZ_{\semiFano,=\bm}^{\semiregular\circ}(X\times_B\sigma'/\sigma'))$ is the Poincaré dual of a nearby point over $\sigma^\circ$ in $H^*_c(\cZ_{\semiFano,=\bm}^{\semiregular\circ}(X\times_B\sigma/\sigma))$.

Now let us show that the geometric local curve element (constructed as above) actually depends on just $z$ and the single fiber $X_0$ containing it (that is, the family into which we include $X_0$ to make $z$ semi-regular does not matter).
First, note that for every $(X_0,z)$, there is some family into which $X_0$ can be included to make $z$ semi-regular (choose a divisor controlling $z$ and apply Lemma \ref{dregularsingle}).
Now suppose $X^1\to B^1$ and $X^2\to B^2$ are two families into which $X_0$ includes (as the fibers over basepoints $0\in B^1$ and $0\in B^2$) and in which $z$ is semi-regular.
We might consider trying to relate them by constructing a family $X^{12}\to B^1\times B^2$ whose restrictions to $B_1\times 0$ and $0\times B_2$ are the families $X^1$ and $X^2$ (which would imply, using the result of the previous paragraph, that the geometric local curve elements resulting from $X^1\to B^1$ and $X^2\to B^2$ are the same).
Instead of doing this, we will construct a third family $X^3\to B^3$ which is related to each of the two families $X^1\to B^1$ and $X^2\to B^2$ in this way.
To construct $X^3\to B^3$, we apply again Lemma \ref{dregularsingle} to $X_0$, with respect to some divisor $D_0\subseteq X_0$ controlling $C$.
But now we observe that $D_0$ extends (just by extension of local holomorphic functions) to relative divisors $D^1\subseteq X^1$ and $D^2\subseteq X^2$, which may be used to extend the deformation of $X$ given by $X^3\to B^3$ to deformations $X^{13}\to B^1\times B^3$ and $X^{23}\to B^2\times B^3$.
This gives the desired relation between the geometric local curve elements associated to $X^1\to B^1$ and $X^2\to B^2$.

We have now associated a geometric local curve element in $H^*_c((\cZ_{\semiFano,=\bm})^{\semiregular\circ}/\Cpx_3)$ to every smooth cycle $z=\sum_im_iC_i$ inside a threefold $X_0$ (and we have shown that every geometric local curve element arises in this way).
Finally, observe that as we deform $(X_0,z)$, the associated geometric local curve element is unchanged (since we can apply Lemma \ref{dregularsingle} to thicken a family as in the previous paragraph).
This is enough, since a pair consisting of a smooth cycle $z=\sum_im_iC_i$ inside a threefold $X_0$, regarded as a germ around the smooth curve $\bigsqcup_iC_i\subseteq X_0$, is classified up to deformation by its topological type (deform to the normal cone and apply Remark \ref{localdeform}).
\end{proof}

\begin{lemma}\label{reducedtoptypelinind}
Geometric local curve elements in $H^*_c((\cZ_{\semiFano,=\bm})^{\semiregular\circ}/\Cpx_3)$ (one for each topological type) are linearly independent.
\end{lemma}

\begin{proof}
We do not use this result logically, so we just sketch the argument.

Form the `interior smooth semi-regular Grothendieck group' $H^*_c((\cZ_\alpha)^{\semiregular\smooth\circ}/\Cpx_3)$ following Definition \ref{grothgpdefmodifiedhard} but using instead the interior of the locus of 1-cycles which are both semi-regular and have smooth support (meaning for $z=\sum_im_iC_i$ that $\bigsqcup_iC_i\subseteq X$ is a smooth curve).
Now we claim that the map $H^*_c((\cZ_\alpha)^{\semiregular\smooth\circ}/\Cpx_3)\to H^*_c((\cZ_\alpha)^{\semiregular\circ}/\Cpx_3)$ (for $\cZ_\alpha\subseteq\cZ_\semiFano$ locally closed) is bijective in virtual dimension $\leq 0$ and surjective in virtual dimension $1$.
The reason is that it fits into a long exact sequence whose third term is an iterated extension of $H^{*+\dim\sigma}((\cZ_\alpha^{\semiregular\circ}\setminus\cZ_\alpha^{\semiregular\smooth\circ})(X/\sigma))$, and this group is supported in virtual dimension $\geq 2$ by a dimension count similar to Lemma \ref{semiregulardimension}, since non-smoothness is a real codimension $\geq 2$ phenomenon given regularity.

It therefore suffices to show the result for $H^*_c((\cZ_{\semiFano,=\bm})^{\semiregular\smooth\circ}/\Cpx_3)$.
But now this group is graded by topological type, since the space of interior smooth cycles of fixed multiplicity is a disjoint union of open pieces corresponding to topological types.
It is therefore enough to show that each geometric local curve element is nonzero, and this follows from a discussion of orientability of the space of interior smooth semi-regular cycles (of fixed multiplicity).
\end{proof}

\begin{theorem}\label{filtrationgenerationcor}
The group $H^*_c(\cZ_\semiFano/\Cpx_3)$ is supported in virtual dimension $\geq 0$, and in virtual dimension zero it is generated by (the images of) any set of elements of $H^*_c(\cZ_{\semiFano,\leq\bm}/\Cpx_3)$ (various $\bm\in\bM$) which project to all geometric local curve elements in $H^*_c(\cZ_{\semiFano,=\bm}/\Cpx_3)$ (one of each topological type; compare Proposition \ref{reducedtoptype}).
\end{theorem}

\begin{proof}
In fact, we show the same statement for $H^*_c(\cZ_{\semiFano,S}/\Cpx_3)$, for any downward closed subset $S\subseteq\bM$.
That is, we show this group is supported in virtual dimension $\geq 0$ and in virtual dimension zero is generated by any set of elements of $H^*_c(\cZ_{\semiFano,\leq\bm}/\Cpx_3)$ (various $\bm\in S$) which project to all geometric local curve elements in $H^*_c(\cZ_{\semiFano,=\bm}/\Cpx_3)$ (one of each topological type).

By a directed colimit argument, it suffices to treat the case that $S$ is finite.
Choose a maximal element $\bm\in S$, and suppose the result is true for $S\setminus\{\bm\}$ (by induction on $\abs S$).
In view of the long exact sequence (a special case of \eqref{modifiedgrothles})
\begin{equation}
\cdots\to H^*_c(\cZ_{\semiFano,S\setminus\{\bm\}}/\Cpx_3)\to H^*_c(\cZ_{\semiFano,S}/\Cpx_3)\to H^*_c(\cZ_{\semiFano,=\bm}/\Cpx_3)\to\cdots
\end{equation}
it suffices to know that $H^*_c(\cZ_{\semiFano,=\bm}/\Cpx_3)$ is supported in virtual dimension $\geq 0$ and in virtual dimension zero is generated by geometric local curve elements.
This follows from Proposition \ref{vdimpositivebetter} applied to $H^*_c((\cZ_{\semiFano,=\bm})^{\semiregular\circ}/\Cpx_3)$ and Theorem \ref{grothendiecktransverse} applied to the map $H^*_c((\cZ_{\semiFano,=\bm})^{\semiregular\circ}/\Cpx_3)\to H^*_c(\cZ_{\semiFano,=\bm}/\Cpx_3)$ (which says it is an isomorphism).
\end{proof}

\begin{theorem}\label{mainsurjectivity}
The group $H^*_c(\cZ_\semiFano/\Cpx_3)$ is supported in virtual dimension $\geq 0$, and in virtual dimension zero it is:
\begin{itemize}
\item generated as a $\QQ$-algebra by the equivariant local curve elements $x_{g,m,k}$ with $g\geq 0$, $m\geq 1$, $k\geq 0$, and $(m-1)k=0$, after passing to $\QQ$ coefficients.
\item generated as a $\ZZ$-algebra by any choice of geometric local curve elements $y_{g,m,k}$ with $g\geq 0$, $m\geq 1$, $k\geq 0$, and $(m-1)k=0$.
\end{itemize}
\end{theorem}

\begin{proof}
According to Theorem \ref{filtrationgenerationcor}, we just need to show our claimed generators reduce to all geometric local curve elements in $H^*_c(\cZ_{\semiFano,=\bm}/\Cpx_3)$.
The reduction of a geometric local curve element in $H^*_c(\cZ_{\semiFano,\leq(m)}/\Cpx_3)$ is a geometric local curve element in $H^*_c(\cZ_{\semiFano,=(m)}/\Cpx_3)$ by definition.
Lemma \ref{eqlocalcurveassocgraded} tells us that the equivariant local curve element $x_{g,m,k}\in H^*_c(\cZ_{\semiFano,\leq(m)}/\Cpx_3)$ reduces to a geometric local curve element in $H^*_c(\cZ_{\semiFano,=(m)}/\Cpx_3)$.
Now reduction is compatible with products, so we get the geometric local curve elements of disconnected topological types as well.
\end{proof}

\begin{remark}\label{continue}
Theorem \ref{mainsurjectivity} is very close to Theorem \ref{maincalculation}: it remains only to prove that there are no relations among the local curve elements (geometric or equivariant, it is the same) of non-deficient ($(m_i-1)k_i=0$) semi-Fano ($k_i\geq 0$) topological type.
It seems possible one could show this by continuing the present line of reasoning.
It would suffice to prove Lemma \ref{reducedtoptypelinind} (whose proof we only sketched) and to prove that the connecting map
\begin{equation}
H^*_c(\cZ_{\semiFano,=\bm}/\Cpx_3)\xrightarrow{+1}H^*_c((\cZ_{\semiFano,<\bm}/\Cpx_3)
\end{equation}
vanishes in virtual dimension $1$ mapping to virtual dimension $0$.
According to the proof of Lemma \ref{reducedtoptypelinind}, elements of virtual dimension $1$ in $H^*_c(\cZ_{\semiFano,=\bm}/\Cpx_3)$ can be described by loops $z_t$ of smooth semi-regular cycles of multiplicity $\bm$.
The connecting map counts how many multiplicity $<\bm$ cycles fly off this family $z_t$ as $t$ varies around the circle.
It seems plausible that the present setting is sufficiently complex analytic to imply that cycles of multiplicity $<\bm$ approaching a cycle of multiplicity $\bm$ is a real codimension two phenomenon, hence is avoided in a generic path/loop.
We will not attempt to make this reasoning precise here.
\end{remark}

\section{Bi-algebra structure and constraints}\label{bialgebraconstraints}

We now show that the ring homomorphism $\ZZ[x_{g,m,k}]\to H^*_c(\cZ_\semiFano/\Cpx_3;\QQ)$ is injective (Theorem \ref{maininjectivity}).
To do this, we use the bi-algebra structure and division operations to produce an element of the kernel (if said kernel is nonzero) of a certain controlled form, which we can then show is detected by some Gromov--Witten invariant hence must be nonzero in $H^*_c(\cZ_\semiFano/\Cpx_3;\QQ)$.
The arguments involved in the first step are somewhat reminiscent of the Milnor--Moore Theorem \cite{milnormoore} (of which many generalizations already exist in the literature).

Consider the free polynomial ring $R=\ZZ[x_{g,m,k}]$ on formal variables $x_{g,m,k}$ indexed by integers $g\geq 0$, $m\geq 0$, and $k\geq 0$, satisfying $(m-1)k=0$, modulo the relation that $x_{g,0,k}=1$.
Equip $R$ with the co-unit and co-multiplication maps given by
\begin{align}
\eta:R&\to\ZZ&\Delta:R&\to R\otimes R\\
x_{g,m,k}&\mapsto 0\quad\text{for }m>0&x_{g,m,k}&\mapsto\sum_{\begin{smallmatrix}a+b=m\\a,b\geq 0\end{smallmatrix}}x_{g,a,k}\otimes x_{g,b,k}
\end{align}
on generators and extended to be algebra maps.
This makes $R$ into a commutative and co-commutative bi-algebra.
Let $\rho_1:R\to R$ be the identity, and let $\rho_d:R\to R$ for $d>1$ be given on generators by
\begin{equation}
\rho_d(x_{g,m,k})=\begin{cases}x_{g,m/d,k}&m\text{ divisible by }d\text{ and }k=0,\\0&\text{otherwise.}\end{cases}
\end{equation}
and extended multiplicatively.
This $\rho_d$ is a map of bi-algebras (commutes with $\Delta$ and $\eta$) by inspection.

The \emph{weight} of a monomial in the variables $x_{g,m,k}$ is a function $w:\ZZ_{\geq 0}\times\ZZ_{\geq 0}\to\ZZ_{\geq 0}$ defined by $w(ab)=w(a)+w(b)$ and $w(x_{g,m,k})=m\cdot 1_{g,k}$.
Given an arbitrary element $a\in R$, we denote by $a_w$ its weight $w$ part.
A tensor product of monomials $a\otimes b$ has a bi-weight $(w(a),w(b))$ and a total weight $w(a)+w(b)$.
The coproduct $\Delta$ preserves (total) weight.

\begin{lemma}[Weight splitting]\label{weightsplitting}
Let $A\subseteq R$ be a subgroup with the property that $\Delta(A)\subseteq(A\otimes R)+(R\otimes A)$.
Let $w$ be any nonzero weight.
If $A$ has an element with nonzero weight $w$ part, then $A$ has an element with nonzero weight $m\cdot 1_{g,k}$ part for some $(g,k)\in\supp w$.
\end{lemma}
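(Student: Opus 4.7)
The plan is to induct on $|\supp w|$. The base case $|\supp w|=1$ is immediate, since then $w$ is itself of the form $m\1_{g,k}$ with $(g,k)\in\supp w$, and the hypothesis is the conclusion. For the inductive step, assume $|\supp w|\geq 2$, pick any $(g_0,k_0)\in\supp w$, and set $w'=w(g_0,k_0)\1_{g_0,k_0}$ and $w''=w-w'$. Both are nonzero, and $\supp w',\supp w''\subsetneq\supp w$.

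The heart of the argument is a single algebraic claim: the composition $\pi_{w',w''}\circ\Delta\colon R_w\to R_{w'}\otimes R_{w''}$ is injective. Since $R$ is a polynomial ring, $R_w$ has a $\ZZ$-basis of monomials $M=\prod_j x_{g_j,m_j,k_j}$ with $\sum_j m_j\1_{g_j,k_j}=w$. Each such $M$ factors uniquely as $M=M'\cdot M''$, where $M'$ gathers the factors with $(g_j,k_j)=(g_0,k_0)$ (giving weight $w'$) and $M''$ gathers the remaining factors (giving weight $w''$). Expanding $\Delta(M)=\prod_j\sum_{a_j+b_j=m_j}x_{g_j,a_j,k_j}\otimes x_{g_j,b_j,k_j}$ and picking out bi-weight $(w',w'')$: the constraint $0\leq a_j\leq m_j$ together with $\sum_{(g_j,k_j)=(g_0,k_0)}a_j=w(g_0,k_0)=\sum_{(g_j,k_j)=(g_0,k_0)}m_j$ forces $a_j=m_j$ on those indices and $a_j=0$ on all others, yielding $\pi_{w',w''}\Delta(M)=M'\otimes M''$. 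Since $M\mapsto(M',M'')$ is a bijection between monomial bases, the map $\pi_{w',w''}\circ\Delta$ is identified with the inverse of the multiplication isomorphism $R_{w'}\otimes R_{w''}\xrightarrow{\sim}R_w$, so in particular is injective.

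Given the claim, the conclusion follows quickly. Take $a\in A$ with $a_w\neq 0$. Since $\Delta$ preserves total weight, $\pi_{w',w''}\Delta(a)=\pi_{w',w''}\Delta(a_w)$, which is nonzero by the claim. On the other hand, the hypothesis $\Delta(A)\subseteq(A\otimes R)+(R\otimes A)$ gives $\pi_{w',w''}\Delta(a)\in\pi_{w'}(A)\otimes R_{w''}+R_{w'}\otimes\pi_{w''}(A)$, using that $\pi_{w',w''}(A\otimes R)=\pi_{w'}(A)\otimes R_{w''}$ (and symmetrically). Since $R_{w'}$ and $R_{w''}$ are both nonzero (for instance, $(x_{g,1,k})^{w(g,k)}$ always realizes the weight $w(g,k)\1_{g,k}$), nonvanishing of this sum forces $\pi_{w'}(A)\neq 0$ or $\pi_{w''}(A)\neq 0$. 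In the first case $w'=w(g_0,k_0)\1_{g_0,k_0}$ is already of the desired form. In the second, apply the inductive hypothesis to $w''$, using that $\supp w''\subseteq\supp w$.

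The only step that requires real attention is the injectivity of $\pi_{w',w''}\circ\Delta$; this reduces to the observation that the unique factorization of monomials in the polynomial ring $R$ persists when split by a single coordinate $(g_0,k_0)\in\supp w$. I do not anticipate any substantive obstacle.
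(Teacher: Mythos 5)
Your proof is correct and follows essentially the same approach as the paper: split $w$ as a sum of two nonzero weights with disjoint supports, use disjointness of supports to show each monomial of weight $w$ contributes a unique monomial of the given bi-weight (equivalently, that $\pi_{w',w''}\circ\Delta$ on $R_w$ is the inverse of the multiplication isomorphism), and conclude from $\Delta(A)\subseteq A\otimes R+R\otimes A$ that $A$ must hit one of the two pieces. Your decomposition into a singleton piece $w(g_0,k_0)\1_{g_0,k_0}$ and the rest, together with an explicit induction on $|\supp w|$, is a minor streamlining of the paper's more general split and its iterative phrasing, but the mathematical content is identical.
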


\begin{proof}
The idea is to use $\Delta$ to `split' the weight $w$ until its support becomes a singleton.
We have $\Delta(a)_w=\Delta(a_w)$.
If the support of $w$ is not a singleton, then write $w=w_1+w_2$ for nonzero $w_1$ and $w_2$ of disjoint support.
Since the supports of $w_1$ and $w_2$ are disjoint, the result of applying $\Delta$ to a monomial of weight $w$ will have a \emph{unique} monomial of bi-weight $(w_1,w_2)$.
In particular, $a_w\ne 0$ implies that $\Delta(a)_{w_1,w_2}\ne 0$.
Since $\Delta(a)\in(A\otimes R)+(R\otimes A)$, we conclude that $A$ must have an element with nonzero weight $w_1$ part or weight $w_2$ part.
Now we replace $w$ with whichever of $w_1$ or $w_2$ it is and repeat until the support of $w$ is a singleton.
\end{proof}

\begin{lemma}[Weight purifying]\label{weightpurifying}
Let $A\subseteq R$ be a subgroup with the property that $\Delta(A)\subseteq(A\otimes R)+(R\otimes A)$.
If $A$ has an element with nonzero weight $m\cdot 1_{g,k}$ part, then $A$ has an element containing the single variable monomial $x_{g,m',k}$ for some $m'\leq m$.
\end{lemma}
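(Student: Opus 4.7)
The plan is to apply iterated coproduct to $a$ and extract the coefficient of a tensor of single-variable monomials $y_{\mu_1}\otimes\cdots\otimes y_{\mu_\ell}$ (where $y_j := x_{g,j,k}$ and $\mu = (\mu_1,\ldots,\mu_\ell)\vdash m$ is a partition to be chosen), using $\Delta(A)\subseteq A\otimes R + R\otimes A$ to force some $y_{\mu_j}$-coefficient in $A$ to be nonzero. To set up, I would restrict attention to the sub-bi-algebra $R_{(g,k)}\subseteq R$ generated by $\{x_{g,m',k}\}_{m'\ge 1}$, which contains $a_{m\1_{g,k}}$. Writing $y_j:=x_{g,j,k}$, one has $R_{(g,k)} = \ZZ[y_1,y_2,\ldots]$ for $k=0$ (the ring of symmetric functions under $y_j\leftrightarrow h_j$), and $R_{(g,k)}=\ZZ[y_1]$ for $k>0$ (polynomial Hopf algebra with $y_1$ primitive). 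Since $\Delta$ preserves weight, the multi-weight $(\mu_1\1_{g,k},\ldots,\mu_\ell\1_{g,k})$ component of $\Delta^{(\ell-1)}(a)$ equals that of $\Delta^{(\ell-1)}(a_{m\1_{g,k}})$.

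The key computation is that for any $\mu\vdash m$ of length $\ell$,
\[
N_\mu := [y_{\mu_1}\otimes\cdots\otimes y_{\mu_\ell}]\,\Delta^{(\ell-1)}(a_{m\1_{g,k}}) = \langle a_{m\1_{g,k}},\, p_{\mu_1}\cdots p_{\mu_\ell}\rangle,
\]
where the right side is the Hall pairing on symmetric functions. This follows from adjointness of $\Delta$ with multiplication together with the identity $m_{(j)} = p_j$ (the monomial symmetric function of single-part partition $(j)$ equals the $j$-th power sum), so that $h_j$ is Hall-dual to $p_j$. Since the products $\{p_\mu\}_{\mu\vdash m}$ form a $\QQ$-basis of the weight-$m$ part of the symmetric function ring and the Hall pairing is perfect over $\QQ$, the hypothesis $a_{m\1_{g,k}}\ne 0$ yields some $\mu\vdash m$ with $N_\mu\ne 0$; for $k>0$, only $\mu=(1^m)$ pairs nontrivially with $a_{m\1_{g,k}} = cy_1^m$.

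Finally I would iterate the closure property to $\Delta^{(\ell-1)}(a)\in\sum_{i=0}^{\ell-1} R^{\otimes i}\otimes A\otimes R^{\otimes(\ell-1-i)}$, then apply the projection $\pi_1\otimes\cdots\otimes\pi_\ell$ with $\pi_j(f):=[y_{\mu_j}]f\in\ZZ$. Each summand maps into the subgroup $\pi_{i+1}(A)=\{[y_{\mu_{i+1}}]f:f\in A\}\subseteq\ZZ$, since the integer scalars obtained by applying $\pi_j$ to the $R$-factors can be absorbed into the $A$-factor because $A$ is closed under $\ZZ$-linear combinations. Therefore $N_\mu\in\sum_j\pi_j(A)$, and since $N_\mu\ne 0$ some $\pi_j(A)\ne 0$: the corresponding element of $A$ contains the single-variable monomial $x_{g,\mu_j,k}$ with $\mu_j\le m$. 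The main delicate point is this scalar-absorption step, which is what lets the proof avoid assuming $A$ is weight-homogeneous.
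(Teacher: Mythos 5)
Your proof is correct and takes a genuinely different route from the paper's. You identify $R_{(g,0)}$ with the Hopf algebra $\Lambda$ of symmetric functions (via $y_j\leftrightarrow h_j$), apply the iterated coproduct $\Delta^{(\ell-1)}$, and use the adjointness $\langle\Delta^{(\ell-1)}(a),p_{\mu_1}\otimes\cdots\otimes p_{\mu_\ell}\rangle=\langle a,p_\mu\rangle$ together with $\QQ$-perfectness of the Hall pairing to find $\mu\vdash m$ with $N_\mu\ne 0$ (for $k>0$ the direct count $N_{(1^m)}=c\,m!\ne 0$ suffices, since $R_{(g,k)}=\ZZ[y_1]$ is not literally $\Lambda$); the scalar-absorption step then correctly extracts some $\pi_j(A)\ne 0$ without assuming $A$ is weight-homogeneous. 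The paper instead applies $\Delta$ only once: it picks the largest index $m'$ appearing as a factor among the weight-$w$ monomials of $a$, observes that the $x_{g,m',k}\otimes(\cdot)$ terms of $\Delta(a)_w$ cannot cancel (dividing a monomial by this top factor is injective on monomials), and then either $m'=m$, so $x_{g,m,k}$ already appears in $a$, or the bi-weight $(m'\1_{g,k},(m-m')\1_{g,k})$ gives a genuine split and one recurses with strictly smaller $m$. Both approaches are valid; the paper's is an elementary induction using a single coproduct, while yours is a single-shot argument that trades the induction for the self-duality of $\Lambda$ under the Hall pairing.
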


\begin{proof}
The argument is similar to `weight splitting' Lemma \ref{weightsplitting}.
Let $w=m\cdot 1_{g,k}$.
Let $a\in A$ have nonzero weight $w$ part.
Among the weight $w$ monomials appearing in $a$, consider the factor $x_{g,m',k}$ with $m'$ the largest possible.
Now consider the monomials in $\Delta(a)_w$ of the form $x_{g,m',k}\otimes-$.
How can a given monomial in $a_w$ contribute such a monomial to $\Delta(a)_w$?
The factors $x_{g,m'',k}$ with $m''<m'$ must go completely on the right.
Of the factors of $x_{g,m',k}$, exactly one must go completely on the left, and the rest must go completely on the right.
Thus the monomials in $\Delta(a)_w$ of the form $x_{g,m',k}\otimes-$ are in bijection with the monomials in $a_w$ with at least one $x_{g,m',k}$ factor, and the effect of $\Delta$ is to multiply their coefficient by the number of such factors.
In particular, $\Delta(a)_w$ contains monomials of the form $x_{g,m',k}\otimes-$.
Appealing to $\Delta(a)\in(A\otimes R)+(R\otimes A)$, we have `split' $w$ unless $m'=m$, in which case we have proven the desired result.
\end{proof}

\begin{lemma}[Weight dividing]\label{weightdividing}
Let $A\subseteq R$ be a subgroup with the property that $\rho_d(A)\subseteq A$.
If $A$ has an element containing the single variable monomial $x_{g,m,0}$, then $A$ has an element containing the single variable monomial $x_{g,1,0}$ and no single variable monomials $x_{g,m',0}$ for $m'>1$.
\end{lemma}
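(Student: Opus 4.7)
The plan is to exhibit the desired element as $\rho_M(a_0)$ for a single carefully chosen integer $M$. First I would fix any $a_0 \in A$ whose coefficient on $x_{g,m,0}$ is nonzero, and then define
\[
M = \max\{m' \geq 1 : \text{the coefficient of } x_{g,m',0} \text{ in } a_0 \text{ is nonzero}\}.
\]
This maximum is well-defined: the set is nonempty (it contains $m$) and finite (as $a_0$ is a polynomial), and $M \geq m \geq 1$. If $M = 1$ then $a_0$ itself already satisfies the conclusion, so I may assume $M \geq 2$.

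The key observation I would isolate is a purity property of $\rho_M$ for $M \geq 2$: it sends every monomial of degree $\geq 2$ in the generators to either $0$ or a monomial of degree $\geq 2$. Indeed, $\rho_M$ is a ring endomorphism, and it sends each generator $x_{g',m',k'}$ (with $m' \geq 1$) either to $0$ or to $x_{g',m'/M,0}$, which still satisfies $m'/M \geq 1$. Crucially, the image is never the unit $1 = x_{g',0,k'}$, so the degree (number of generator factors, with multiplicity) of a monomial is preserved whenever the image is nonzero.

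Granted this purity observation, the degree-one part of $\rho_M(a_0)$ is obtained by applying $\rho_M$ termwise to the degree-one part of $a_0$. A single-variable term $x_{g',m',k'}$ appearing in $a_0$ contributes to the coefficient of $x_{g,\ell,0}$ in $\rho_M(a_0)$ exactly when $g' = g$, $k' = 0$, and $m' = M\ell$; by maximality of $M$, no such $m' > M$ appears in $a_0$ with nonzero coefficient, so the coefficient of $x_{g,\ell,0}$ in $\rho_M(a_0)$ vanishes for all $\ell > 1$ and equals the nonzero coefficient of $x_{g,M,0}$ in $a_0$ when $\ell = 1$. Since $\rho_M(A) \subseteq A$ by hypothesis, the element $\rho_M(a_0) \in A$ has the required property. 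I do not foresee a substantive obstacle: the entire argument rests on the elementary degree-preservation observation for $\rho_M$ together with the maximality choice of $M$.
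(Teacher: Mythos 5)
Your proof is correct and takes exactly the paper's route: the paper's one-line argument is ``take an element of $A$ containing single variable monomials $x_{g,m,0}$ for various $m$, and apply $\rho_d$ to it where $d$ is the maximum $m$ among them,'' which is precisely your construction with $d = M$. You have simply made explicit the two points the paper leaves implicit --- that $\rho_M$ (for $M\geq 2$) preserves the number of generator factors in a surviving monomial because no generator maps to the unit, and that maximality of $M$ kills all $x_{g,m',0}$ with $m' < M$ while sending $x_{g,M,0}$ to $x_{g,1,0}$ with the same nonzero coefficient.
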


\begin{proof}
Take an element of $A$ containing single variable monomials $x_{g,m,0}$ for various $m$, and apply $\rho_d$ to it where $d$ is the maximum $m$ among them.
\end{proof}

Order the pairs $(g,k)\in\ZZ_{\geq 0}\times\ZZ_{\geq 0}$ lexicographically, namely $(g,k)<(g',k')$ when either $g<g'$ or $g=g'$ and $k<k'$.
This is evidently a well-ordering.

\begin{proposition}\label{deltalemma}
Let $A\subseteq R$ be a subgroup with the property that $\Delta(A)\subseteq(A\otimes R)+(R\otimes A)$ and $\rho_d(A)\subseteq A$.
If $A\ne 0$, then there exists an element of $A$ containing the single variable monomial $x_{g,1,k}$ and no single variable monomials $x_{g,m,k}$ with $m>1$ or $x_{g',m',k'}$ with $(g',k')<(g,k)$.
\end{proposition}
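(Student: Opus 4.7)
The plan is to use the well-ordering of $\ZZ_{\geq 0} \times \ZZ_{\geq 0}$ under lexicographic order. Let $S$ denote the set of pairs $(g,k)$ such that $A$ contains an element with $x_{g,1,k}$ as a single variable monomial. I will show $S$ is nonempty and that its minimum in lex order is the pair $(g,k)$ demanded by the proposition.

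To see $S \neq \varnothing$, start with any nonzero $a \in A$. Some monomial in $a$ has nonzero weight $w$, and Lemma \ref{weightsplitting} (applied if necessary by splitting $w$ into two disjointly supported pieces until singleton-supported) produces an element of $A$ whose weight $m\1_{g_0,k_0}$ part is nonzero for some $(g_0,k_0)$ and $m \geq 1$. Lemma \ref{weightpurifying} then yields an element of $A$ containing a single variable monomial $x_{g_0, m', k_0}$ for some $m' \leq m$. If $k_0 > 0$, then the generator constraint $(m'-1)k_0 = 0$ forces $m' = 1$, so $(g_0, k_0) \in S$; if $k_0 = 0$, Lemma \ref{weightdividing} upgrades this to an element containing $x_{g_0, 1, 0}$, so $(g_0, 0) \in S$. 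By well-ordering, let $(g_*, k_*) = \min S$.

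Next comes the key \emph{minimality claim}: no element of $A$ contains any single variable monomial $x_{g', m', k'}$ with $(g', k') < (g_*, k_*)$. Indeed, if some $a \in A$ did contain such a monomial, then the weight $m'\1_{g',k'}$ part of $a$ is nonzero, and the same chain (Lemma \ref{weightpurifying} followed by Lemma \ref{weightdividing} when $k' = 0$, or the $(m''-1)k' = 0$ shortcut that forces $m'' = 1$ when $k' > 0$) would exhibit an element of $A$ containing $x_{g', 1, k'}$, placing $(g', k') \in S$ strictly below $(g_*, k_*)$, a contradiction.

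To finish, fix any $a \in A$ containing $x_{g_*, 1, k_*}$ as a single variable monomial. If $k_* > 0$, then $R$ has no generator $x_{g_*, m, k_*}$ with $m > 1$ at all, so $a$ already satisfies both required conditions (the ``no $m > 1$'' condition trivially, and the ``no smaller $(g',k')$'' condition by the minimality claim). If $k_* = 0$, apply Lemma \ref{weightdividing} to $a$ (with input monomial $x_{g_*, 1, 0}$) to obtain $a' \in A$ containing $x_{g_*, 1, 0}$ and no $x_{g_*, m, 0}$ for $m > 1$; the minimality claim then ensures $a'$ contains no $x_{g', m', k'}$ with $(g', k') < (g_*, 0)$. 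There is no real obstacle beyond setting up the lex minimum correctly: the previously established weight lemmas already upgrade any nonzero element of $A$ to one with a single variable monomial, and the minimality claim is what suppresses strictly smaller pairs uniformly across all of $A$.
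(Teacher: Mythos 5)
Your proof is correct and follows essentially the same approach as the paper's: the same chain of Lemmas \ref{weightsplitting}, \ref{weightpurifying}, \ref{weightdividing} combined with the lexicographic well-ordering of $\ZZ_{\geq 0}\times\ZZ_{\geq 0}$. The only difference is bookkeeping --- you extract the minimal pair $(g_*,k_*)$ from the auxiliary set $S$ of pairs already realized by some $x_{g,1,k}$ and prove the minimality claim as a separate step, whereas the paper takes $(g,k)$ to be the minimum, over all weights $w$ of monomials in $A$, of $\max\supp w$; these two minima coincide, so the arguments are logically interchangeable.
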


\begin{proof}
Consider the set of all weights of all monomials appearing in elements of $A$.
Each such weight has a maximum pair $(g,k)$ in its support.
Fix $(g,k)$ to be the minimum such pair.
Let $a\in A$ have a monomial whose weight has $(g,k)$ as the maximum element in its support.
By Lemma \ref{weightsplitting}, there exists $a\in A$ with a monomial of weight $m\cdot 1_{g,k}$.
By Lemma \ref{weightpurifying}, there exists $a\in A$ with a single variable monomial $x_{g,m,k}$ (possibly different $m$).
By Lemma \ref{weightdividing}, there exists $a\in A$ with a single variable monomial $x_{g,1,k}$ and no single variable monomials $x_{g,m,k}$ with $m>1$.
Finally, $a$ has no single variable monomials $x_{g',m',k'}$ with $(g',k')<(g,k)$ by choice of $(g,k)$.
\end{proof}

\begin{theorem}\label{likemilnormoore}
Let $f:R\to R'$ be any morphism of bi-algebras compatible with bi-algebra endomorphisms $\rho_d$ (any such on $R'$).
Suppose that for every $g\geq 0$ and $k\geq 0$, there exists a group homomorphism $\varphi_{g,k}:R'\to\QQ$ sending $f(x_{g,1,k})$ to $1$, sending $f(x_{g',m',k'})$ to $0$ for $(g',k')>(g,k)$, and sending $f$ of any monomial of degree $>1$ to zero.
In this case the map $f$ is injective.
\end{theorem}

\begin{proof}
We have $\Delta(\ker f)\subseteq\ker(f\otimes f)$ since $f$ is compatible with co-multiplication $\Delta$, and we have $\rho_d(\ker f)\subseteq\ker f$ since $f$ is compatible with $\rho_d$.
Now $\ker(f\otimes f)=(\ker f)\otimes R+R\otimes(\ker f)$ if we base change to $\QQ$ (note that $R$ is torsion free, so it suffices to show injectivity after rationalizing).
We may thus apply Proposition \ref{deltalemma} (which works just as well over $\QQ$ as over $\ZZ$) to conclude that if $\ker f\ne 0$, then it has an element containing the single variable monomial $x_{g,1,k}$ and no single variable monomials $x_{g,m,k}$ with $m>1$ or $x_{g',m',k'}$ with $(g',k')<(g,k)$.
Such an element is evidently sent to something nonzero by $\varphi_{g,k}\circ f$, contradicting the fact that it lies in $\ker f$.
We thus have $\ker f=0$.
\end{proof}

\begin{theorem}\label{maininjectivity}
The ring homomorphism $R\to H^*_c(\cZ_\semiFano/\Cpx_3;\QQ)$ which sends the formal variable $x_{g,m,k}\in R$ to the equivariant local curve element $x_{g,m,k}\in H^*_c(\cZ_\semiFano/\Cpx_3;\QQ)$ is a bi-algebra morphism compatible with $\rho_d$ and is injective.
\end{theorem}

\begin{proof}
This ring homomorphism is a bi-algebra homomorphism by Lemma \ref{coproductlocalcurve} and the fact that $H^*_c(\cZ_\semiFano/\Cpx_3)$ vanishes in negative virtual dimension (note the virtual dimension of $x_{g,m,k,\ell}$ is $-2\ell$) by Theorem \ref{grothendiecktransverse} and Proposition \ref{vdimpositive}.
It is also with the operations $\rho_d$ on both sides by Lemma \ref{rholocalcurve} and again the fact that $H^*_c(\cZ_\semiFano/\Cpx_3)$ vanishes in negative virtual dimension (we have $\rho_d(x_{g,m,k})=x_{g,m/d,k,(1-1/d)mk}$, whose virtual dimension is negative except when $d=1$ or $mk=0$).

Now to show injectivity, we apply Theorem \ref{likemilnormoore}, according to which it suffices to define, for every $g,k\geq 0$, a group homomorphism $H^*_c(\cZ_\semiFano/\Cpx_3)\to\QQ$ sending $x_{g,1,k}$ to $1$, sending $x_{g',m',k'}$ to $0$ for $(g',k')>(g,k)$, and sending any monomial in equivariant local curve elements of degree $>1$ to zero.
Such a homomorphism is provided by Lemma \ref{gwdetect} below.
\end{proof}

\begin{lemma}\label{gwdetect}
Let $g,k\geq 0$.
There exists a group homomorphism $\GW_{g,k}:H^*_c(\cZ/\Cpx_3)\to\QQ$ such that $\GW_{g,k}(x_{g,1,k})=1$, $\GW_{g,k}(x_{g',m',k'})=0$ for $(g',k')>(g,k)$, and $\GW_{g,k}$ evaluates to zero on any monomial in equivariant local curve elements of degree $>1$.
\end{lemma}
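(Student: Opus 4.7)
The plan is to realize $\GW_{g,k}$ as a normalized pairing with a \emph{connected} Gromov--Witten virtual class in fixed arithmetic genus $g$, restricted to cycles of chern number $k$. First, I would repeat the construction of $\GW$ from \S\ref{zgwptsummary} but with $\cMbar'(X/B)$ replaced by the moduli $\cMbar_g(X/B)$ of stable maps from \emph{connected} arithmetic-genus-$g$ nodal curves (carrying the analogous quasi-smooth derived structure of relative virtual dimension $\langle c_1(T_{X/B}),\beta\rangle$); this produces a class $\GW_g^{\mathrm{conn}} \in H^\relinfty_*(\cZ(\Cpx_3);\QQ)$. The key structural observation is that $\GW_g^{\mathrm{conn}}$ is \emph{primitive} for the bi-algebra structure on $H^\relinfty_*$: a connected stable map to a disjoint union $X \sqcup Y$ must land entirely in one factor, so $\cMbar_g((X \sqcup Y)/B) = \cMbar_g(X/B) \sqcup \cMbar_g(Y/B)$ as derived moduli with virtual classes, which dualizes to $\Delta(\GW_g^{\mathrm{conn}}) = \GW_g^{\mathrm{conn}} \otimes 1 + 1 \otimes \GW_g^{\mathrm{conn}}$. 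The standard bi-algebra identity then yields $\langle \GW_g^{\mathrm{conn}}, ab \rangle = \varepsilon(a)\langle \GW_g^{\mathrm{conn}}, b\rangle + \varepsilon(b)\langle \GW_g^{\mathrm{conn}}, a\rangle$, which annihilates every monomial $x_{g_1,m_1,k_1} \cdots x_{g_n,m_n,k_n}$ with $n \geq 2$ and all $m_i > 0$ (since each factor lies in $\ker \varepsilon$).

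Next, let $\pi_k$ denote the projection $H^*_c(\cZ(\Cpx_3)) \twoheadrightarrow H^*_c(\cZ(\Cpx_3,k))$ onto the chern-number-$k$ summand, set $c_{g,k} := \langle \GW_g^{\mathrm{conn}}, x_{g,1,k}\rangle \in \QQ$, and define $\GW_{g,k}(\alpha) := c_{g,k}^{-1} \langle \GW_g^{\mathrm{conn}}, \pi_k \alpha\rangle$. Since $x_{g,1,k}$ has chern number $k$, we have $\pi_k(x_{g,1,k}) = x_{g,1,k}$ and thus $\GW_{g,k}(x_{g,1,k}) = 1$ by construction. Vanishing on $x_{g',m',k'}$ for $(g',k') > (g,k)$ splits into two cases. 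If $g' = g$ and $k' > k$, then $m' \geq 1$ together with $k' > k \geq 0$ forces the chern number $m'k'$ to strictly exceed $k$ (either $k > 0$ gives $m'k' \geq k' > k$, or $k = 0$ gives $m'k' \geq k' > 0$), so $\pi_k$ annihilates $x_{g',m',k'}$. If instead $g' > g$ (so $g' \geq 1$), I would argue that $\cMbar_g(E_{g',k'}, m'[C_{g'}]) = \varnothing$, whence the pairing vanishes: for any non-contracted normalized component $C_i \to C_{g'}$ of degree $d_i \geq 1$ in a connected source of arithmetic genus $g$, Riemann--Hurwitz forces $g(C_i) \geq d_i(g'-1) + 1 \geq g'$, and the arithmetic genus of a connected nodal curve satisfies $p_a \geq \sum_j g(C_j) \geq g(C_i) \geq g' > g$, a contradiction; the only alternative (every component contracted) makes the total degree zero.

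The main obstacle is verifying $c_{g,k} \neq 0$, which makes the normalization valid. In degree one the moduli $\cMbar_g(E_{g,k}, [C_g])$ reduces to a stacky point (a unique isomorphism from $C_g$ onto the zero section, with trivial automorphisms commuting with the map), and restricting $f^*TE_{g,k}$ to the zero section as $TC_g \oplus E_{g,k}$ identifies the tangent-obstruction complex with $E_{g,k}[0]$, giving deformation space $H^0(C_g, E_{g,k})$ and obstruction space $H^1(C_g, E_{g,k})$, both of $\CC^\times$-weight one under fiberwise scaling. Riemann--Roch gives $h^0 - h^1 = k$, so equivariant virtual localization produces the equivariant virtual class $t^{h^1 - h^0} = t^{-k}$ on the moduli point. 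By Proposition \ref{explicitlocalizedpushforward}, $c_{g,k}$ is extracted as the (nonzero rational) coefficient of $t^{-k}$ in this expression, consistent with the degree-one case of the Bryan--Pandharipande \cite{bryanpandharipande} computation. Once this nonvanishing is in hand, the three conditions of the lemma follow from the preceding paragraphs.
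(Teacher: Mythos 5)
Your proof is correct and follows essentially the same strategy as the paper: pair with the virtual class of connected genus-$g$ stable maps restricted to chern number $k$, use connectedness for the degree-$>1$ vanishing, use the absence of non-constant maps from genus $g$ to genus $g'>g$ and the chern-number constraint for the $(g',k')>(g,k)$ vanishing, and extract the normalization from degree-one maps to a local curve. The one substantive divergence is the final normalization step: the paper takes $E=L\oplus L'$ with $c_1(L)=g-1$, $c_1(L')=g-1+k$ so that $h^1(C,E)=0$ and the moduli of degree-one maps is a \emph{transversely cut out} $k$-dimensional space, making $x_{g,1,k}$ literally the Poincaré dual of a point and the integral exactly $1$; you instead run equivariant virtual localization at the zero-section fixed point to get $t^{-k}$, then invoke Proposition \ref{explicitlocalizedpushforward} to read off $c_{g,k}$ and divide. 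Both work, though the transversality route gives $c_{g,k}=1$ directly and avoids the (harmless) hedge of an \emph{a priori} unknown normalization constant. Two minor imprecisions worth flagging: $\cMbar_g(E_{g,k},[C_g])$ does not reduce to a point (it is the $k$-dimensional family of sections, plus boundary); what reduces to a point is its $\CC^\times$-fixed locus, which is what your subsequent tangent--obstruction computation actually uses. Also, the Riemann--Hurwitz detour is more than necessary --- the fact that a connected nodal curve of arithmetic genus $g$ admits no non-constant map to a smooth curve of genus $g'>g$ is standard and can be cited directly, as the paper does.
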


\begin{proof}
Let $\GW_{g,k}$ integrate over the virtual fundamental class of the moduli space $(\cM_g')_{c_1=k}$ of non-constant stable maps from connected nodal curves of arithmetic genus $g$ representing a homology class with chern number $k$.
Since the image of a connected space is connected, $\GW_{g,k}$ annihilates monomials in equivariant local curve elements of degree $>1$.

We have $\GW_{g,k}(x_{g',m',k'})=0$ if $g'>g$, since there are no non-constant maps from a nodal curve of arithmetic genus $g$ to a curve of genus $g'>g$.
In the case $g=g'$, the map would have to have degree $d=k/k'<1$, hence cannot exist.

Finally, let us calculate $\GW_{g,k}(x_{g,1,k})=1$.
According to Lemma \ref{eqlocalcurveassocgraded}, the local curve element $x_{g,1,k}$ is the Poincaré dual of a point in the transversely cut out moduli space $H^0(C,E)$ of sections of a local curve $E\to C$ with $H^1(C,E)=0$.
It thus pairs against the fundamental class (which is the virtual fundamental class) of $H^0(C,E)$ to give $1$.
\end{proof}

\appendix

\section{Virtual fundamental classes}\label{vfc}

We give a brief exposition of the theory of the intrinsic normal cone, perfect obstruction theories, and virtual fundamental classes as pioneered by Behrend--Fantechi \cite{intrinsicnormalcone}.
Naturally, we give particular emphasis to the properties of this theory which we need for this paper (our need is, in fact, confined to Section \ref{vfcsubsec}).
References include Behrend--Fantechi \cite{intrinsicnormalcone}, Manolache \cite{manolache}, Qu \cite{quvirtualpullback}, Khan \cite{khanvirtual,khanrydhvirtual}, Déglise--Jin--Khan \cite{deglisejinkhan}, and Porta--Yu \cite{portayu}.

A morphism of complex analytic spaces is called a \emph{closed embedding} when it is the inclusion of a closed analytic subspace, i.e.\ what is usually called a `closed immersion' (a term which we will avoid since it conflicts with the meaning of the term `immersion' in differential topoogy).
If $X\to Y$ is a closed embedding, then $\Bl_XY=\Proj_Y\bigoplus_{r\geq 0}I_X^r$ denotes the blow-up of $Y$ along $X$.
For any cartesian diagram of complex analytic spaces
\begin{equation}
\begin{tikzcd}
X'\ar[d,hook]\ar[r]&X\ar[d,hook]\\
Y'\ar[r]&Y
\end{tikzcd}
\end{equation}
in which the vertical arrows are closed embeddings, there is a (functorial) closed embedding of blow-ups $\Bl_{X'}Y'\to\Bl_XY\times_YY'$, corresponding to the surjection of graded rings $\bigoplus_{r\geq 0}I_X^r\otimes_{\cO_Y}\cO_{Y'}\twoheadrightarrow\bigoplus_{r\geq 0}I_{X'}^r$.

A \emph{complex analytic stack} shall mean a stack (i.e.\ sheaf of groupoids) on the site of complex analytic spaces which admits a submersive atlas (hence has representable diagonal).
A morphism of complex analytic stacks is called \emph{Deligne--Mumford} when its diagonal is \emph{unramified} (a morphism of complex analytic spaces is called unramified when it is, locally on the source, a closed embedding).
The diagonal of any morphism of complex analytic spaces is a locally closed embedding, hence every morphism of complex analytic spaces is Deligne--Mumford.

\begin{definition}[Deformation to the normal cone]
Given $X\hookrightarrow Y$ a closed embedding of complex analytic spaces, one associates the following spaces \cite[Chapter 5]{fulton}.
\begin{align}
M_{X/Y}&=\Bl_{X\times 0}(Y\times\AA^1)\\
M_{X/Y}^\circ&=M_{X/Y}\setminus\Bl_{X\times 0}(Y\times 0)\\
C_{X/Y}&=M^\circ_{X/Y}\times_{\AA^1}0
\end{align}
Note that $\Bl_{X\times 0}(Y\times 0)\to\Bl_{X\times 0}(Y\times\AA^1)$ is a closed embedding; explicitly $M_{X/Y}^\circ=\Spec_Y\cO_Y[t,I_Xt^{-1}]$.
The object $C_{X/Y}$ is called the \emph{normal cone} of $X\hookrightarrow Y$, and the space $M_{X/Y}^\circ$ is called the \emph{deformation to the normal cone} (it maps to $\AA^1$ with fiber $Y$ over $\AA^1-0$ and fiber $C_{X/Y}$ over $0$).
Given $X\to Y$ a Deligne--Mumford morphism of complex analytic stacks, the above objects are defined by descent \cite{intrinsicnormalcone,kreschartin,kreschrational,kimkreschpantev}\cite[\S 2.2]{manolache}\cite[\S 1.1]{quvirtualpullback}.
\end{definition}

\begin{remark}\label{swdualrmk}
The normal cone $C_{X/Y}$ is an algebro-geometric analogue of the relative Spanier--Whitehead dual of $X$ over $Y$.
\end{remark}

\begin{lemma}[{\cite[Theorem 2.31]{manolache}\cite[Proposition 1.2]{quvirtualpullback}}]
For any cartesian square
\begin{equation}
\begin{tikzcd}
X'\ar[d]\ar[r]&X\ar[d]\\
Y'\ar[r]&Y
\end{tikzcd}
\end{equation}
whose vertical arrows are Deligne--Mumford, there is an induced closed embedding $M_{X'/Y'}^\circ\to M_{X/Y}^\circ\times_YY'$ over $\AA^1$, hence also a closed embedding $C_{X'/Y'}\to C_{X/Y}\times_YY'$.
\end{lemma}

\begin{proof}
For a closed embeddings $X\hookrightarrow Y$, this corresponds to the surjection of rings $\cO_Y[t,I_Xt^{-1}]\otimes_{\cO_Y}\cO_{Y'}\twoheadrightarrow\cO_{Y'}[t,I_{X'}t^{-1}]$.
Now apply descent.
\end{proof}

\begin{lemma}\label{deformationproduct}
For maps $X\to Y$ and $X'\to Y'$, there is a canonical isomorphism
\begin{equation}
M_{X\times X'/Y\times Y'}^\circ\xrightarrow\sim M_{X/Y}^\circ\times_{\AA^1}M_{X'/Y'}^\circ
\end{equation}
which, in particular, specializes over $0$ to an isomorphism $C_{X\times X'/Y\times Y'}\xrightarrow\sim C_{X/Y}\times C_{X'/Y'}$.
\end{lemma}

\begin{proof}
For closed embeddings $X\hookrightarrow Y$ and $X'\hookrightarrow Y'$, this map corresponds to the map of graded rings 
\begin{equation}
\cO_{Y\times Y'}[t,I_X\cO_{Y'}t^{-1},t',\cO_YI_{X'}(t')^{-1}]/(t-t')\to\cO_{Y\times Y'}[t,(I_X\cO_{Y'}+\cO_YI_{X'})t^{-1}],
\end{equation}
which is an isomorphism by inspection (to see this, it is helpful to choose vector space splitings of the inclusions $I^r\supseteq I^{r+1}$).
Now apply descent.
\end{proof}

Recall the derived category $D(X)$ of (unbounded) complexes of sheaves of $\ZZ$- or $\QQ$-modules on a complex analytic stack $X$ (references include \cite{laszloolsson}).
At the level of $\infty$- or dg-categories, the category $D(X)$ associated to a stack $X$ is the limit of $D(U)$ over all complex analytic spaces $U$ with a map to $X$ (equivalently, all those with a submersive map to $X$).
For a map of complex analytic stacks $f:X\to Y$, we have pairs of adjoint functors $(f^*,f_*)$ and $(f_!,f^!)$ (always derived) between $D(X)$ and $D(Y)$.
Note that we need the exceptional pushforward/pullback operations for non-separated morphisms.
These satisfy proper base change, in the sense that for a cartesian square
\begin{equation}
\begin{tikzcd}
X'\ar[d,"f'"]\ar[r,"\beta"]&X\ar[d,"f"]\\
Y'\ar[r,"\alpha"]&Y
\end{tikzcd}
\end{equation}
there is a canonical isomorphism $f'_!\beta^*=\alpha^*f_!$ (equivalently $\beta_*f^{\prime!}=f^!\alpha_*$).
There is also a canonical natural transformation $f_!\to f_*$, which is an isomorphism for proper representable morphisms $f$.
More generally, it is an isomorphism for proper Deligne--Mumford morphisms $f$ provided we are using $\QQ$-coefficients (to see the need for $\QQ$-coefficients, note that for $f:*/G\to *$ with $G$ finite, the natural transformation $f_!\to f_*$ is the natural map $V_G\to V^G$ from co-invariants to invariants given by `sum over $G$-orbits' for $G$-representations $V$).

Recall the bivariant group $H_*^\relinfty(X/Y)=H^*(X,\pi^!\underline\ZZ_Y)=a_*\pi_*\pi^!a^*\ZZ$ associated to a morphism $\pi:X\to Y$ (where $a:Y\to *$) in Definition \ref{bivariantmaindef}.
Whereas Definition \ref{bivariantmaindef} was about `nice' maps of topological spaces, we consider here the same definition but for (possibly non-separated) morphisms of complex analytic stacks.

\begin{definition}\label{specializationgeneral}
Fix a map $f:X\to Y$ and a map $\pi:D\to Y\times\AA^1$ which is an isomorphism over $Y\times(\AA^1-0)$ and whose fiber over $Y\times 0$ is identified (over $Y$) with $X$.
Associated to this data is a canonical \emph{specialization element} in $H_0^\relinfty(X/Y)$ defined as follows (note that this definition is somewhat different from its analogue in the context of algebraic cycles \cite{manolache,quvirtualpullback,khanvirtual}).
In fact, there is an associated natural transformation of functors $1\to f_*f^!$.
Let $(\AA^1-0)^\sim$ denote the universal cover, and consider the following diagram.
\begin{equation}
\begin{tikzcd}
X\ar[r,"i"]\ar[d,"f"']&D\ar[d,"\pi"]&Y\times(\AA^1-0)^\sim\ar[l,"j"']\ar[d,equal]\\
Y\ar[r,"i"]&Y\times\AA^1&\ar[l,"j"']Y\times(\AA^1-0)^\sim
\end{tikzcd}
\end{equation}
There is an induced diagram of derived categories, which commutes by proper base change.
\begin{equation}
\begin{tikzcd}
D(X)\ar[r,"i_*"]\ar[d,leftarrow,"f^!"']&D(D)\ar[d,leftarrow,"\pi^!"]&D(Y\times(\AA^1-0)^\sim)\ar[l,"j_*"']\ar[d,equal]\\
D(Y)\ar[r,"i_*"]&D(Y\times\AA^1)&\ar[l,"j_*"']D(Y\times(\AA^1-0)^\sim)
\end{tikzcd}
\end{equation}
Now we combine this on the bottom with the natural transformation $j_*p^*\to i_*$ where $p:Y\times(\AA^1-0)^\sim\to Y$, and we post-compose with the pushforward $D(D)\to D(Y)$.
This produces the desired a natural transformation $1\to f_*f^!$ by contractibility of $(\AA^1-0)^\sim$.
\end{definition}

\begin{definition}
The \emph{specialization element} in $H_0^\relinfty(C_{X/Y}/Y)$ refers to that associated to the family $M_{X/Y}^\circ\to Y\times\AA^1$ by Definition \ref{specializationgeneral}.
\end{definition}

\begin{remark}
Continuing Remark \ref{swdualrmk}, the specialization element in $H_*^\relinfty(C_{X/Y}/Y)$ is analogous to the Spanier--Whitehead dual (relative $Y$) of the map $X\to Y$.
\end{remark}

\begin{lemma}\label{vfcpullback}
For a cartesian square
\begin{equation}
\begin{tikzcd}
X'\ar[d]\ar[r]&X\ar[d]\\
Y'\ar[r]&Y
\end{tikzcd}
\end{equation}
whose vertical arrows are Deligne--Mumford, the specialization elements in $H_*^\relinfty(C_{X/Y}/Y)$ and $H_*^\relinfty(C_{X'/Y'}/Y')$ have the same image in $H_*^\relinfty(C_{X/Y}\times_YY'/Y')$.
\end{lemma}

\begin{proof}
Diagram chase and compatibility of the proper base change isomorphism with composition of pullback squares.
\end{proof}

\begin{lemma}\label{vfcproduct}
For maps $X\to Y$ and $X'\to Y'$, the specialization element of $H_*^\relinfty(C_{X\times X'/Y\times Y'}/Y\tighttimes Y')=H_*^\relinfty(C_{X/Y}\times C_{X'/Y'}/Y\tighttimes Y')$ is the image of the tensor product of the specialization elements of $H_*^\relinfty(C_{X/Y}/X)$ and $H_*^\relinfty(C_{X'/Y'}/Y')$ under the Künneth map.\qed
\end{lemma}

Now we recall the notion of a \emph{perfect obstruction theory} from Behrend--Fantechi \cite{intrinsicnormalcone}, which allows to transfer the specialization element in $H_0^\relinfty(C_{X/Y}/Y)$ to a class in $H_d^\relinfty(X/Y)$ when the `virtual relative tangent bundle' $T_{X/Y}^\vir$ has rank $d$.

\begin{definition}\label{twotermtotalspace}
Given a two-term complex of vector bundles (aka perfect complex of amplitude $[-1\;0]$) $E^\bullet=[E^{-1}\to E^0]$ over a complex analytic stack $X$ (by which we mean an object in the sheafified 2-category of such), we may form the `total space' $E^\bullet$ given locally in the case $X$ is a space by the groupoid $E^{-1}\times_XE^0\arrowstack{r,r}E^0$ \cite[\S 2]{intrinsicnormalcone}.
\end{definition}

\begin{definition}\label{potnondef}
A \emph{perfect obstruction theory} on a Deligne--Mumford morphism $X\to Y$ is an amplitude $[0\;1]$ perfect complex $T_{X/Y}^\vir$ on $X$ together with a closed embedding $C_{X/Y}\to T_{X/Y}^\vir[1]$ (i.e.\ into the `total space' of $T_{X/Y}^\vir[1]$).
\end{definition}

\begin{remark}
Definition \ref{potnondef} is non-standard and is made only for the sake of simplicity in the present discussion.
A perfect obstruction theory (in the standard meaning as introduced by Behrend--Fantechi \cite{intrinsicnormalcone}) on a Deligne--Mumford morphism $X\to Y$ is an amplitude $[0\;1]$ perfect complex $T_{X/Y}^\vir$ on $X$ together with a linear map $T_{X/Y}[1]\to T_{X/Y}^\vir[1]$ which is a closed embedding on total spaces, where $T_{X/Y}[1]$ denotes the `total space of the dual of $(\tau^{\geq-1}\LL_{X/Y})[-1]$'.
There is a canonical closed embedding $C_{X/Y}\to T_{X/Y}[1]$ (see \cite{intrinsicnormalcone,manolache}), so a perfect obstruction theory in the sense of \cite{intrinsicnormalcone} induces one in the sense of Definition \ref{potnondef}.
\end{remark}

Given (the total space of) a two-term complex $\pi:E^\bullet\to X$, the functor $\pi_*\pi^!$ is shift by $\dim E^\bullet=\dim E^0-\dim E^{-1}$.
In particular, for a morphism $X\to Y$ this identifies $H_{*+\dim E^\bullet}^\relinfty(E^\bullet/Y)$ with $H_*^\relinfty(X/Y)$.

\begin{definition}\label{vfcdef}
The \emph{relative virtual fundamental class} $[X/Y]^\vir\in H_{\dim T_{X/Y}^\vir}^\relinfty(X/Y)$ associated to a perfect obstruction theory $C_{X/Y}\to T_{X/Y}^\vir[1]$ is the image of the canonical degree zero element in $H_*^\relinfty(C_{X/Y}/Y)$ under the proper pushforward map to $H_*^\relinfty(T_{X/Y}^\vir[1]/Y)$, followed by the identification of this group with $H_{*+\dim T_{X/Y}^\vir}^\relinfty(X/Y)$ from just above.
\end{definition}

A perfect obstruction theory on $X\to Y$ determines one on $X'=X\times_YY'\to Y'$ for any morphism $Y'\to Y$, namely we take $T_{X'/Y'}^\vir$ to be the pullback of $T_{X/Y}^\vir$, and we consider the composition of the closed embedding $C_{X'/Y'}\to C_{X/Y}\times_YY'$ with the pullback to $Y'$ of the map $C_{X/Y}\to T_{X/Y}^\vir[1]$.

\begin{lemma}\label{vfcpullbackpot}
Fix a cartesian diagram
\begin{equation}
\begin{tikzcd}
X'\ar[d]\ar[r]&X\ar[d]\\
Y'\ar[r]&Y
\end{tikzcd}
\end{equation}
whose vertical arrows are Deligne--Mumford, and fix a perfect obstruction theory $C_{X/Y}\to T_{X/Y}^\vir[1]$ with pullback $C_{X'/Y'}\to C_{X\times Y}\times_YY'\to T_{X/Y}^\vir[1]\times_YY'=T_{X'/Y'}^\vir[1]$.
The pullback map $H_*^\relinfty(X/Y)\to H_*^\relinfty(X'/Y')$ sends $[X/Y]^\vir$ to $[X'/Y']^\vir$.
\end{lemma}

\begin{proof}
Immediate from Lemma \ref{vfcpullback}.
\end{proof}

Given perfect obstruction theories on $X\to Y$ and $X'\to Y'$, their product is a perfect obstruction theory on $X\times X'\to Y\times Y'$ (recall the isomorphism $C_{X\times X'/Y\times Y'}\xrightarrow\sim C_{X/Y}\times C_{X'/Y'}$ of Lemma \ref{deformationproduct}).

\begin{lemma}\label{vfcproductpot}
For Deligne--Mumford morphisms $X\to Y$ and $X'\to Y'$ with perfect obstruction theories, the virtual fundamental class $[X\times X'/Y\times Y']^\vir$ is the image of $[X/Y]^\vir\otimes[X'/Y']^\vir$ under the Künneth map $H_*^\relinfty(X/Y)\otimes H_*^\relinfty(X'/Y')\to H_*^\relinfty(X\times X'/Y\times Y')$.
\end{lemma}

\begin{proof}
Immediate from Lemma \ref{vfcproduct}.
\end{proof}

Lemmas \ref{vfcpullbackpot} and \ref{vfcproductpot} together imply the compatibility of virtual fundamental classes with \emph{fiber} products.

\bibliographystyle{amsplain}
\bibliography{mnopcy}

\end{document}